\newtheorem{theorem}{Theorem}
\newtheorem{definition}[theorem]{Definition}
\newtheorem{lemma}[theorem]{Lemma}
\newtheorem{scholium}[theorem]{Scholium}
\newtheorem{proposition}[theorem]{Proposition}
\newtheorem{example}[theorem]{Example}
\numberwithin{theorem}{section}
\newcommand\8{\infty}
\renewcommand\t{\tau}
\newcommand\e{\varepsilon}
\newcommand\s{\sigma}
\renewcommand\o{\omega}
\newcommand\st{ [\s,\t ] }
\numberwithin{equation}{section}
\newcommand\eps{{\epsilon}}
\newcommand\bE{{\mathbb E}}
\newcommand\bL{{\ell}}
\newcommand\bN{{\mathbb N}}
\newcommand\bP{{\mathbb P}}
\newcommand\bQ{{\mathbb Q}}
\newcommand\bR{{\mathbb R}}
\newcommand\bZ{{\mathbb Z}}
\newcommand\cB{\mathcal{B}}
\newcommand\cF{\mathcal{F}}
\newcommand\cL{\mathcal{L}}
\newcommand\cP{\mathcal{P}}
\newcommand\cQ{\mathcal{Q}}
\newcommand\cS{\mathcal{S}}
\newcommand\p{^\prime}
\newcommand\pp{^{\prime\prime}}
\newcommand\bstar{\begin{eqnarray*}}
\newcommand\estar{\end{eqnarray*}}
\newcommand\be{\begin{equation}}
\newcommand\ee{\end{equation}}
\newcommand\bea{\begin{eqnarray}}
\newcommand\eea{\end{eqnarray}}
\newcommand\1{{\bf 1}}
\newcommand\ovl{\overline}
\newcommand\udl{\underline}
\title{Pathwise Stochastic Calculus with Local
Times}
\author{Mark Davis, Jan Ob\l\'oj and Pietro Siorpaes}
\address{Davis: Department of Mathematics, Imperial College London, London SW7 2AZ}
\email{mark.davis@imperial.ac.uk}
\urladdr{http://www.imperial.ac.uk/people/mark.davis}
\address{Ob\l\'oj, Siorpaes: Mathematical Institute, University of Oxford, Oxford OX2 6GG}
\email{obloj@maths.ox.ac.uk, siorpaes@maths.ox.ac.uk}
\urladdr{www.maths.ox.ac.uk/people/jan.obloj, www.maths.ox.ac.uk/people/pietro.siorpaes}
\date{First version: 16 May 2012. This version: \today{}.}
\begin{document}
\begin{abstract}
We study a notion of local time for a continuous path, defined as a limit
of suitable discrete quantities along a general sequence of partitions of
the time interval. Our approach subsumes other existing definitions and
agrees with the usual (stochastic) local times a.s. for paths of a
continuous semimartingale. We establish pathwise version of the
Tanaka-Meyer, change of variables and change of time formulae. We provide
equivalent conditions for existence of pathwise local time. Finally, we study in detail how the limiting objects,
the quadratic variation and the local time, depend on the choice of
partitions. In particular, we show that an arbitrary given non-decreasing
process can be achieved a.s.\ by the pathwise quadratic variation of a
standard Brownian motion for a suitable sequence of (random) partitions; however, such degenerate behaviour is excluded when the partitions are constructed from stopping times.
\end{abstract}
\thanks{This research has been generously supported by the European Research Council under the European Union's Seventh Framework Programme (FP7/2007-2013) / ERC grant agreement no.\ 335421. Jan Ob\l\'oj is also grateful to the Oxford-Man Institute of Quantitative Finance and St John's College in Oxford for their financial support. The authors thank Hans F\"ollmer and Vladimir Vovk for helpful comments and advice at various stages of this work.}
\maketitle
\section{Introduction}
 In a seminal paper, F\"ollmer \cite{fol81} pioneered a non-probabilistic approach to stochastic calculus. 
 For a function $x$ of real variable, he introduced a notion of quadratic variation  $\langle x\rangle_t$ along a sequence of partitions $(\pi_n)_n$ and proved the associated It\^o's formula for $f\in C^2$: 
\be f(x_t)-f(x_0)=\int_0^tf^\prime(x_s)dx_s+\frac{1}{2}\int_0^tf^{\prime\prime}(x_s)d\langle x\rangle_s , \ee
 where the integral $\int_0^tf^\prime(x_s)dx_s$ is defined as the limits of non-anticipative Riemann sums, shown to exist whenever $\langle x\rangle_t$ exists. F\"ollmer also observed that a path of a semimartingale 
a.s.\ admits quadratic variation in the pathwise sense and the usual stochastic integral agrees with his pathwise integral a.s.
 
The underlying motivation behind our current study was to extend the pathwise stochastic integral and its It\^o's formula to functions $f$ which are not in $C^2$. This question arose from applications in mathematical finance (see Davis et al.\ \cite{dor11}) but, we believe, is worth pursuing for its own sake. It led us to develop pathwise stochastic calculus featuring local times, which is the first main contribution of our work. We define a notion of local time $L^x_t(u)$ for a continuous function  $x$, prove the associated Tanaka-Meyer formula and show that a path of a continuous semimartingale $X$ a.s.\ admits pathwise local time $L^{X(\omega)}_t(u)$ which then agrees with the usual (stochastic) local time. 
Our contribution should be seen in the context of three previous connected works. First, our results are related to Bertoin \cite{ber87}, who showed similar results for a large class of Dirichlet processes; 
see also Coutin, Nualart and Tudor \cite{CoNuTu01} (who consider fractional Brownian motion with Hurst index $H>1/3$) and 
 Sottinen and Viitasaari \cite{SoVi14} (who consider a class of Gaussian processes).
 Second, related results appeared in the unpublished diploma thesis of Wuermli \cite{wur80}. Our approach is similar, however the proof in \cite{wur80} was complicated and applied only to square integrable martingales. We also have a slightly different definition of local time which includes continuity in time and importantly we consider convergence in $L^p$  for $p\in [1,\infty)$ instead of just $p=2$. This allows us to capture the tradeoff between the generality of paths considered and the scope of applicability of the Tanaka-Meyer formula. Indeed, as the term $\int_{\bR} L^{\pi}_t(u)f''(du)$ suggests, there is a natural duality between $L^{\pi}_t$ and $f''$, so the smaller the space to which $L^{\pi}_t$ belongs, the more general $f''$ one can take.
 This fact was already powerfully exploited by our third main reference, the recent paper by Perkowski and Pr\"omel \cite{PePr14} (and, to a much lesser extent, \cite{FeZh06}), in which $L^{\pi}_t$ belongs to the space of continuous functions, and thus $f''$ can be a general measure (i.e.\ $f'$ has bounded variation and $f$ is the difference of two arbitrary convex functions), recovering Tanaka-Meyer formula in full generality (the authors also consider the case where $L^{\pi}_t$ is continuous and also has bounded $p$-variation, and thus $f'$ can be any function with bounded $q$-variation). In particular, the conclusion of their main theorem (Theorem 3.5 in \cite{PePr14}) on the existence of $L^{X(\omega)}_t(u)$ for a.e. $\omega$ is stronger than ours; however, since their local time has to be continuous, results in \cite{PePr14} apply if $X$ is a local martingale either under the original probability $\bP$ or under some $\bQ \gg \bP$ (see  \cite[Remark 3.6]{PePr14}), whereas our Theorem \ref{GenExistsLT}  applies to a general semimartingale $X$. 
 
 Further, we investigate several  questions not considered in \cite{ber87}, \cite{wur80} and \cite{PePr14}, as we explain now. The main advantage of our definition, as compared with these previous works, is that  we are able to characterise the existence of pathwise local time with a number of equivalent conditions (see Theorem \ref{MainEquiv}). This feature seems to be entirely new. It allowed us in particular to build an explicit example of a path
 which admits a quadratic variation but no local time. Also,  while  \cite{dor11} and \cite{wur80} (following \cite{fol81})  consider partitions $\pi_n$ whose mesh is going to zero (which are well suited for changing  variables), the results \cite[Theorem 3.1]{ber87} and \cite[Theorem 3.5]{PePr14} of existence of pathwise local time consider Lebesgue\footnote{Which we define in \eqref{LebPart}.} partitions. Since neither type of partition is a special case of the other, this makes the results in these papers hard to compare. We solve this conundrum by proving our existence result (Theorem \ref{GenExistsLT}) for a general type of partition, which subsumes both types considered above.
Finally,  we prove  that the existence of $L_t(u)$ is preserved by a $C^1$ change of variables (improving on \cite[Proposition B.6]{dor11}) and by time changes, and that  
$g \to \int_0^t g(x_s)dx_s$ is continuous (similarly\footnote{Note that due to difference in definitions of local time, we use different topologies.} to  \cite{PePr14}).
 
Finally, we investigate how the limiting objects, quadratic variation and local time, depend on the choice of partitions. We show that for a path which oscillates enough, with a suitable choice of partitions, its quadratic variation can attain essentially any given non-decreasing function. From this, taking care of null sets and measurability issues, one can deduce that for a Brownian motion $W$ and a given increasing $[0,\infty]$-valued measurable  process $A$ with $A_0=0$   there exist refining partitions $(\pi_n)_n$ made of \emph{random}  times such that
\[ \textstyle  \langle W\rangle^{\pi_n}_t:= \sum_{  t_j \in \pi_n}(W_{t_{j+1}\wedge t}-W_{t_j \wedge t})^2 \to A_t\quad \textrm{a.s.\ for all }t\geq 0.\] 
This result illustrates, in the most stark way possible, the dependence of the pathwise quadratic variation (and thus of the pathwise local time) on the partitions $(\pi_n)_n$. This may push the reader to dismiss the notion of pathwise quadratic variation (and local time). However, it worth recalling that if we restrict ourselves to partitions constructed from  \emph{stopping} times, the limit of $ \langle W\rangle^{\pi_n}_t$ exists and is independent of the choice of partitions, and it always equals $t$. Analogously, our Theorem \ref{GenExistsLT} states that, if we only consider partitions constructed from stopping times, the pathwise local time $L^{X(\omega)}_t(u)$ of a semimartingale $X$ exists  and is independent of the choice of partitions, and it  coincides  with the classical local time. 

As already known by L\'evy,  $\inf_{\pi}\langle x\rangle^{\pi}_1=0$ for every continuous function $x$ and  $\sup_{\pi}\langle W(\omega)\rangle^{\pi}_1=\8$  for a.e.\ $\omega$. Our analysis builds on these  facts 
and answers in particular two questions which they leave open: whether for the general path $x=W(\omega)$ one can make $\langle x\rangle^{\pi_n}_1$ converge to any chosen  $C=C(\o)\in \bR$,  and what dependence in $t$ we can expect for $\lim_{n} \langle x\rangle^{\pi_n}_t$. Specifically it is clear that it must be an increasing function, and we wondered whether it is automatically continuous; indeed, while we followed F\"ollmer \cite{fol81}, who  carefully \emph{required} that  $\lim_{n} \langle x\rangle^{\pi_n}_t$ be continuous\footnote{More precisely \cite{fol81} deals with c{\`a}dl{\`a}g $x$ and requires that  $\mu_n$ (defined later in \eqref{MeasMu}) converge weakly to a measure $\mu$ which assigns mass $(\Delta x_t)^2$ to the singleton $\{t\}$; if $x$ is continuous this implies continuity of $ \langle x\rangle^{\Pi}_{\cdot}$.}, several authors who  cite \cite{fol81}  do not (see for example  \cite{ber87}, \cite{dor11}, \cite{son06}) and our results show that this is a significant omission.

 The plan for rest of the paper is as follows. In Section \ref{sec-pathwise} we introduce most of the notations and definitions, and recall parts of \cite{fol81}.
 In Section \ref{sec-plt} we identify several  conditions equivalent to the existence of pathwise local time, prove the Tanaka-Meyer formula and the continuity of $g \to \int_0^t g(x_s)dx_s$.
 In Section  \ref{sec-cv} we consider change of variable and time, and in Section \ref{sec-ext2conv} we extend Tanaka-Meyer formula from the case of a Sobolev function  $f$ to the case where $f$ is a difference of convex functions. 
  In Section \ref{sec-upXing} we prove that a path of a semimartingale a.s.\ admits the pathwise local time, and relate this to the downcrossing representation of semimartingale local time proved by L\'evy.
 Finally in Section \ref{sec-dep} we state the results about dependence of quadratic variation on the sequence of partitions, including the convergence $  \langle W\rangle^{\pi_n}_t \to A_t$ mentioned above. We only give the proof for one path avoiding the (non-trivial) technicalities related to measurability and null sets.
   The latter are given in the appendix.

\section{Pathwise stochastic calculus}\label{sec-pathwise}
In this section we introduce most notations and definitions used throughout the article, and we revisit the part of \cite{fol81} which deals with continuous functions, slightly refining its results  to include uniformity in $t$ and more general partitions.  

By  measure we mean sigma-additive positive  measure; a Radon measure will be the difference of two measures which are finite on compact sets.   With $|\mu|$ we will denote the total-variation measure relative to a `real measure' $\mu$ (i.e. $\mu$ is the difference of two measures), and with $\max(\mu,0)$ the  measure $(\mu+|\mu|)/2$ (i.e. the positive part in the Hahn-Jordan decomposition of $\mu$). 
We will say that $g_n\to g$ \emph{fast in} $L^p(\mu)$ if $\sum_{n\in \bN} || g_n -g||^p_{L^p(\mu)}<\8$; this trivially implies that $g_n\to g$ a.s. and in $L^p(\mu)$.
We will denote by $\cB$ (resp.\ $\cB_T$) the Borel sets of $[0,\infty)$ (resp.\ [0,T]). 
For a continuous function $x=(x_s)_{s\geq 0}$, $\udl{x}_t$ and $\ovl{x}_t$ are respectively the minimum and maximum of $x_s$ over $s\in[0,t]$. We set $x_{\8}=0$, denote by  $\delta_t$ the Dirac measure at $t$, and by  $\pi$  a partition of $[0,\8)$, i.e. $\pi=(t_k)_{k\in \bN}$ where $t_k\in [0,\8], t_0=0, t_k<t_{k+1}$ if $t_{k+1}<\8$, and $\lim_{k\to \8}t_{k}=\8$.
For such $x$ and $\pi$, we set 
\begin{align}
\label{Osc}
 O_t(x,\pi):=\max\{|x(b) - x(a)|:   a,b\in [t_k,t_{k+1})\cap [0,t] \textrm{ for some }k\in \bN \} .
\end{align} 
 F\"ollmer works with a sequence of finite partitions $(\pi_n)_n$ whose step on compacts converges to zero. This excludes very commonly used partitions: the \emph{Lebesgue partitions}, i.e., those of the form $\pi_P=\pi_P(x)=(t_k)_{k\in\bN}$,
\begin{align}
\label{LebPart}
 \text{ where }\quad 
t_0:=0  , \,  t_{k+1}:=\inf\{ t>t_k : x_t \in P, x_t\neq x_{t_k}  \} 
\end{align} 
 for some $P$  partition  of $\bR$, i.e., $P=(p_k)_{k\in \bZ}$  with  
 \[p_k\in [-\8,\8], \, \lim_{k\to \pm \8}p_{k}=\pm \8,\, p_0=0,  \text{ and } p_k<p_{k+1} \text{ if } p_k,p_{k+1} \in \bR .\]

We will work instead with partitions $\pi_n$ such that $O_T(x,\pi_n)\to 0$ for all $T<\8$; these are very flexible, as they subsume both Lebesgue partitions and the ones used by  F\"ollmer. Moreover they allow us to obtain time-change results, and have the additional advantage that one can always pass to refinements (since  if $\pi\subseteq \pi'$ then $O_T(x,\pi) \geq O_T(x,\pi')$).

While our aim in this paper is to develop a pathwise, non-probabilistic, theory, it is often the case that we want to consider paths that arise as sample functions of some stochastic process. Such processes are assumed to be defined on some underlying filtered probability space $(\Omega,\cF,(\cF_t)_{t\in [0,\infty)},\bP)$ satisfying the `usual conditions'. We denote by
 $\int_0^t H_s d X_s$  the stochastic integral of a predictable and locally-bounded integrand $H$ with respect to a continuous semimartingale $X=(X_t)_{t \geq 0}$.  Inequalities between random variables are tacitly supposed to hold for $\bP$-almost every $\omega$.
A sequence of partitions of $[0,\8)$ made of random (resp. stopping) times will be called a  \emph{random} (resp. \emph{optional}) \emph{partition} of $[0,\8)$; more precisely if $\pi=(\tau_k)_{k\in \bN}$, where $\tau_k$ are $[0,\8]$-valued random variables such that $ \tau_0=0, \tau_k\leq\tau_{k+1}$ with $\tau_k<\tau_{k+1}$ on $\{\tau_{k+1}<\8 \}$, and $\lim_{k\to \8}\tau_{k}=\8$, then $\pi$ is called a random partition, and if moreover $\{\tau_k\leq t \} \in \cF_t$ for all $k,t$ then $\pi$ is an optional\footnote{The terminology is justified by the fact that $\t$ is a stopping time iff $\1_{\{\tau_\leq \cdot \}}$ is an optional process.} partition.

 \begin{definition}\label{def-qvp}
 Given a continuous function $x:[0,\infty)\to\bR$ and a sequence of partitions $\Pi=(\pi_n)_n$ such that $O_T(x,\pi_n)$ converges to zero as $n\to \8$  for all $T<\8$, we will say that   \emph{$x$ has quadratic variation} (sampled along $\Pi$)     if  the measures 
 \begin{align}
\label{MeasMu}
\textstyle 
\mu_n:=\sum_{t_j\in\pi_n}(x_{t_{j+1}}-x_{t_j})^2\delta_{t_j}  
\end{align} 
converge vaguely\footnote{Meaning that $\int f d\mu_n \to \int f d \mu$ for every continuous function $f$ with compact support.} to a measure \emph{without atoms}  $\mu$ as $n\to\infty$. We will write $x\in \cQ$ if $O_T(x,\pi_n)\to 0$  for all $T<\8$ and $x$ has quadratic variation.
\end{definition} 
Recall that   $\mu_n$ converges weakly to a non-atomic measure $\mu$  
 iff its cumulative distribution function converges pointwise  to a continuous function, and thus iff
\[ \textstyle  \langle x\rangle^{\pi_n}_t:= \sum_{  t_j \in \pi_n}(x_{t_{j+1}\wedge t}-x_{t_j \wedge t})^2 \]
converges pointwise to a \emph{continuous} (increasing) function $\langle x\rangle_t$; the cumulative distribution function of $\mu$ is then $\langle x\rangle$, and is called the quadratic variation of $x$.
Such convergence is then always uniform in $t$, and more generally for every $T>0 $ and  continuous function $f: [\udl{x}_T,\ovl{x}_T]\to \bR$ 
\begin{align}
\label{QVInt}
\sum_{  t_j \in \pi_n}f(x_{t_i})(x_{t_{j+1}\wedge t}-x_{t_j \wedge t})^2 \longrightarrow  \int_0^t f(x_s) d \langle x\rangle_s  \text{ uniformly in } t\in [0,T] :
\end{align} 
 indeed if $t_j\leq t<t_{j+1}$ the sum on the left of \eqref{QVInt} differs from $\int_0^t f d\mu_n$ by at most 
 \[|f(x_{t_j})((x_{t_{j+1}}-x_{t_j})^2-(x_t-x_{t_j})^2)| \leq 2 ||f||_{\8} O_T(x,\pi_n)^2 , \]
  and  $\int_0^t f d\mu_n$ converges to $\int_0^t f d\mu$ uniformly in $t\leq T$ as the following simple observation applied to the positive and negative parts of $f$ shows.
\begin{scholium}
        \label{Polya}[Polya]
Let $F,F_n:[0,T] \to \bR$ be c{\`a}dl{\`a}g increasing. If $F$ is  continuous and $F_n \to F$ pointwise then the convergence is uniform in $t \in [0,T]$.
\end{scholium}

Note that a priori $\mu,\langle x\rangle$ and $\cQ$ depend on $\Pi=(\pi_n)_n$; when we want to stress this dependence, we will write $\mu^\Pi,\langle x\rangle^\Pi, \cQ^\Pi$. 
Note also that the series in \eqref{QVInt} is in fact a finite sum, since every partition is finite on compacts.

We introduce now some more notation which will be used throughout and in particular in Section \ref{sec-dep} and its proofs.  Given numbers $a\leq s\leq t\leq b$ and a finite partition $\pi$ of $[a,b]$ (meaning $\pi=(t_i)_{i=0}^k$ with $t_0=a$, $t_i<t_{i+1}$ for all $i$, and $t_k=b$) we set
\begin{align}
\label{QVst}
\langle x\rangle^{\pi}_{(s,t]}:=\sum_i (x_{(t_{k+1}\wedge t) \vee s}-x_{(t_k\wedge t ) \vee s})^2 ;
\end{align} 
 if  $s=a$ and $t=b$ the latter expression simplifies and we denote it with 
\begin{align*}
\label{}
\langle x\rangle_{\pi}:=\langle x\rangle^{\pi}_{(a,b]}=\sum_i (x_{t_{k+1}}-x_{t_k})^2 .
\end{align*} 
 Notice that  if $\pi$ is a partition of $[a,b]\ni s,t$ then
 \begin{align}
\label{TriangleIneqfor<B>}
\langle x\rangle^{\pi}_{(a,b]}=\langle x\rangle^{\pi}_{(a,s]}+\langle x\rangle^{\pi}_{(s,b]} \quad \text{ if } s\in \pi \, ;
\end{align} 
 in particular  
\begin{align}
\label{QVincreas}
\langle x\rangle^{\pi}_{(s,t]}   \leq  \langle x\rangle^{\pi}_{(a,b]} \quad \text{ if } s,t\in \pi ,
\end{align} 
and if  $\tilde{\pi}$ is a partition  of $[b,c]$ then
 \begin{align}
\label{QVadditive}
\langle x\rangle^{\pi\cup \tilde{\pi}}_{(a,c]}=\langle x\rangle^{\pi}_{(a,b]}+\langle x\rangle^{\tilde{\pi}}_{(b,c]}  .
\end{align} 

We shall now see that the quadratic variation sampled along optional partitions $(\pi_n)_n$ exists on a.e.\ path of a semimartingale and that a.e. it does not depend on $(\pi_n)_n$. This is essentially the usual result on the existence of the quadratic variation for a semimartingale.

\begin{proposition}
\label{ExistsQV}
Let $X$ be a continuous semimartingale and $[X]_t := X_t^2-2 \int_0^{t} X_s d X_s $. 
If   $\Pi=(\pi_n)_n$ are optional partitions such that $O_{T }(X,\pi_n)\to 0$ a.s. for all $T<\8$ then there exists some subsequence $(n_k)_k$ such that, for each $\omega$ outside a $\bP$-null set and setting $\Pi':=(\pi_{n_k})_k$, we have $X(\omega)\in \cQ^{\Pi'}$ and $\langle X(\omega)\rangle^{\Pi'}= [X](\omega)$.
\end{proposition}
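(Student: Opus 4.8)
The plan is to establish that $\langle X\rangle^{\pi_n}_\cdot$ converges to $[X]_\cdot$ in the topology of uniform convergence on compacts in probability (ucp) and then extract a subsequence along which the convergence holds $\bP$-a.s.\ and uniformly on compacts. Since $O_T(\cdot,\pi_n)$ and $\langle\cdot\rangle^{\pi_n}_\cdot$ are unchanged if one replaces $X$ by $X-X_0$, we may and do assume $X_0=0$; this normalisation makes $[X]_0=0$, consistently with $\langle\cdot\rangle^{\pi_n}_0=0$. Writing $\pi_n=(t_j)_j$ and applying $b^2-a^2=(b-a)^2+2a(b-a)$ to the telescoping sum $X_t^2=\sum_j(X^2_{t_{j+1}\wedge t}-X^2_{t_j\wedge t})$, I would record the identity
\begin{align*}
\langle X\rangle^{\pi_n}_t
&=\sum_j\big(X_{t_{j+1}\wedge t}-X_{t_j\wedge t}\big)^2\\
&=X_t^2-2\sum_j X_{t_j\wedge t}\big(X_{t_{j+1}\wedge t}-X_{t_j\wedge t}\big)\\
&=X_t^2-2\int_0^t H^n_s\,dX_s,
\end{align*}
where $H^n:=\sum_j X_{t_j}\,\mathbf 1_{(t_j,t_{j+1}]}$. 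This is exactly where the hypothesis that $(\pi_n)_n$ is \emph{optional} enters: the $t_j$ being stopping times, $H^n$ is a left-continuous adapted (hence predictable), locally bounded process, so $\int_0^\cdot H^n\,dX$ is a genuine stochastic integral and coincides with the Riemann sum above.

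The core step is to prove $\int_0^\cdot H^n\,dX\to\int_0^\cdot X\,dX$ in ucp. First, for $s\in(t_j,t_{j+1}]$ with $s\le T$ the points $s$ and $t_j$ lie in the same partition cell (for $s=t_{j+1}$ one passes to the limit using continuity of $X$), so $\sup_{s\le T}|H^n_s-X_s|\le O_T(X,\pi_n)\to 0$ $\bP$-a.s.; in particular $H^n\to X$ in ucp. Second, $|H^n|\le K:=\sup_m|H^m|$, and $K$ is predictable (a countable supremum of predictable processes) and a.s.\ locally bounded, since $\sup_{s\le T}K_s\le\sup_{s\le T}|X_s|+\sup_m O_T(X,\pi_m)<\infty$ (the last supremum being finite because $O_T(X,\pi_m)\to0$); hence $K$ is $X$-integrable. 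The stochastic dominated convergence theorem then gives $\int_0^\cdot H^n\,dX\to\int_0^\cdot X\,dX$ in ucp, and combining this with the displayed identity, $\langle X\rangle^{\pi_n}_\cdot\to X_\cdot^2-2\int_0^\cdot X\,dX=[X]_\cdot$ in ucp.

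To finish, I would use that ucp convergence on $[0,\infty)$ is metrized by $d(Y,Z)=\sum_{m\ge1}2^{-m}\bE\big[1\wedge\sup_{t\le m}|Y_t-Z_t|\big]$: picking $(n_k)_k$ with $d(\langle X\rangle^{\pi_{n_k}},[X])\le 2^{-k}$, a Borel--Cantelli argument produces a single $\bP$-null set outside of which $\sup_{t\le T}\big|\langle X(\omega)\rangle^{\pi_{n_k}}_t-[X]_t(\omega)\big|\to0$ for every $T<\infty$. Enlarging this null set by $\{\,O_T(X,\pi_n)\not\to0\text{ for some }T<\infty\,\}$, for each remaining $\omega$ the path $X(\omega)$ satisfies $O_T(X(\omega),\pi_{n_k})\to0$ and $\langle X(\omega)\rangle^{\pi_{n_k}}_\cdot\to[X]_\cdot(\omega)$ uniformly on compacts, the limit being continuous and increasing. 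By the characterisation recalled just after Definition \ref{def-qvp} — vague convergence of the $\mu_n$ to a non-atomic measure is equivalent to pointwise convergence of $\langle x\rangle^{\pi_n}_\cdot$ to a continuous increasing function — this says precisely that $X(\omega)\in\cQ^{\Pi'}$ with $\langle X(\omega)\rangle^{\Pi'}=[X](\omega)$, where $\Pi'=(\pi_{n_k})_k$.

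The only substantial difficulty is the ucp convergence of the non-anticipating Riemann sums $\int_0^\cdot H^n\,dX$: one cannot appeal to the classical mesh-to-zero convergence of Riemann sums, since Lebesgue partitions need not have time-mesh tending to zero, so one is forced to exploit the weaker control $O_T(X,\pi_n)\to0$ through the stochastic dominated convergence theorem — which is precisely what makes the optionality of the partitions indispensable (to secure predictability of $H^n$). The telescoping identity and the extraction of an a.s.\ convergent subsequence with a single null set are routine.
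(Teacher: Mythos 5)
Your proposal is correct and follows essentially the same route as the paper: the identity $\langle X\rangle^{\pi_n}_t=X_t^2-2\int_0^t H^n_s\,dX_s$ with $H^n=\sum_j X_{t_j}\1_{(t_j,t_{j+1}]}$, the stochastic dominated convergence theorem to get ucp convergence of $\int_0^\cdot H^n\,dX$ to $\int_0^\cdot X\,dX$, and then extraction of an a.s.\ convergent subsequence (which the paper leaves implicit, as it does the harmless normalisation $X_0=0$). The only difference is cosmetic: instead of your dominating process $K=\sup_m|H^m|$ (whose local boundedness needs a word of justification), the paper simply uses $|H^n_t|\leq X^*_t:=\sup_{s\leq t}|X_s|$, which is continuous, adapted and hence locally bounded and predictable.
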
 
\begin{proof}
Write $\pi_{n}=(\tau_j^n)_j$, take   $H^n:=\sum_j X_{\tau_j^n}\1_{(\tau_j^n,\tau_{j+1}^n]}$ and notice that 
\begin{align}
\label{QVDiffInt}
 \textstyle 2\int_0^t XdX+ [X]_t= X^2_t=2\int_0^tH^n d X_t+ \langle X\rangle^{\pi_{n}}_t  \, .
\end{align} 
Since $H^n$ converges pointwise to $X$ and is bounded by a locally bounded predictable process\footnote{For example $|H_t^n|\leq X^*_{t}$ with $X^*_{t}:=\sup_{s\leq t}|X_s|$.}, the stochastic dominated convergence theorem gives that $\int_0^{\cdot} H^n d X$ converges to $\int_0^{\cdot} X d X$  uniformly on compacts in probability, which implies the thesis.
\end{proof} 
 
We now show that one can identify some of the subsubsequences along which the previous statement holds; in particular this holds when $\pi_n$ is the Lebesgue partition $\pi_{D_n}$ corresponding to $D_n:=2^{-n}\bN$ (the dyadics of order $n$).
Given $p\in [1,\8)$ we denote by $\cS^p$ the set of  continuous semimartingales   $X=(X_t)_{t\in [0,T]}$ which satisfy
\[ \|X\|_{\cS^p}:=\left\|[M]_{T}^{1/2} \right\|_{L^p} + \left\| \int_0^T\ d|V|_t \right\|_{L^p}<\infty ,\]
where $X=M+V$ is the canonical semimartingale decomposition of $X$, $[M]_t:=M_t^2-2 \int_0^{t} M_s d M_s $ is the quadratic variation of $M$ and  $|V|_t$ is the variation of $(V_s)_{s\in [0,t]}$. We recall the inequality 
\begin{align}
\label{GenBDG}
\| \sup_{t\leq T} \left| X_t \right| \|_{L^p(\bP)}  \leq C_p \|X\|_{\cS^p} \, ,
\end{align} 
which holds for local martingales (this being one side of the celebrated Burkholder-Davis-Gundy inequalities) and thus trivially extends to $X\in \cS^p$.
We will also use without further mention that if $H$ is  locally-bounded and predictable then the canonical decomposition of $\int_0^{\cdot} H dX$ is $\int_0^{\cdot} H dM+\int_0^{\cdot} H dV$ and so 
\begin{align*}
\label{}
\left\| \int_0^{\cdot} H dX \right\|_{\cS^p} =  \left\| \left(\int_0^T H_t^2  d[M]_t\right)^{1/2} \right\|_{L^p}  + \left\|\int_0^T |H_t| \ d|V|_t \right\|_{L^p}  .
\end{align*}

\begin{proposition}
\label{Qa.s.}
If in Proposition \ref{ExistsQV} we make the stronger assumption that $\sum_{n} O_{T }(X,\pi_n)<\8$ a.s. for all $T<\8$  then  $X(\omega)\in \cQ^{\Pi}$ and $\langle X(\omega)\rangle^{\Pi}=[X](\omega)$ for a.e. $\o$.
\label{ConvWholeSeq}
\begin{proof}
 Fix a compact time interval $[0,T]$ on which we will work. By prelocalizing  we can assume that $X\in \cS^4$ (see\footnote{This statement also appears in \cite[Chapter 5, Theorem 14]{Pr04}, without proof.} \'Emery \cite[Th\'eor\`eme 2]{Em79}) and passing to an equivalent probability we can  moreover\footnote{If $X\in \cS^4(\bP)$ and $(d\bQ/d\bP)(\o):=C \exp(- K(\o))$ then $K \in L^4(\bQ) $, and $X\in \cS^4(\bQ)$ since $d\bQ/d\bP\in L^{\infty}(\bP)$.} assume that  $K:=\sum_{n} O_{T }(X,\pi_n)\in L^4$.
Take $H^n$ as in Proposition \ref{ExistsQV}, $K^n:=H^n-X$ and notice that $\sup_t |K^n_t|\leq  O_{T }(X,\pi_n)$, so 
 $\sum_n \sup_t |K^n_t|\leq K$ and in particular $\sum_n \sup_t|K^n_t|^2\leq K^2\in L^2$.  Using   \eqref{GenBDG} and $(a+b)^2\leq 2(a^2 +b^2)$  gives that $\sup_t |\int_0^{t} K^n dX |$ converges to zero fast in $L^2$  if 
 $$ \textstyle \sum_n \bE ( \int (K^n)^2 d[M] + (\int K^n d|V|)^2 ) $$
 is finite, which is true since it is bounded above by 
 $ \textstyle \bE (  K^2 ([M]_T + |V|_T^2 )) $, which is finite by H\"older inequality  since $K\in L^4, X\in \cS^4$.
  Since $\sup_{t\leq T} |\int_0^{t} K^n dX |\to 0$ fast in $L^2$ and thus a.s.,  \eqref{QVDiffInt}  yields
  $\textstyle \sup_{t\leq T} |\langle X\rangle^{\pi_{n}}_t -  [X]_t| \to 0$ a.s.. 
\end{proof} 
\end{proposition}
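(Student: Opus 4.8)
The plan is to upgrade the ``uniformly on compacts in probability'' estimate used in the proof of Proposition~\ref{ExistsQV} to convergence that holds a.s.\ \emph{along the whole sequence} $(\pi_n)_n$, and the clean way to do this is to show that the relevant error converges to zero \emph{fast in $L^2$}. Fix a finite horizon $T$, write $X=M+V$ for the canonical decomposition, let $H^n$ be as in Proposition~\ref{ExistsQV} and set $K^n:=H^n-X$. Rearranging identity \eqref{QVDiffInt} gives $\langle X\rangle^{\pi_n}_t-[X]_t=-2\int_0^t K^n\,dX$, so it suffices to prove
\[
\sum_n \Big\| \sup_{t\le T}\Big| \int_0^t K^n\,dX \Big| \Big\|_{L^2}^2 <\infty .
\]
This forces $\sup_{t\le T}|\int_0^t K^n\,dX|\to 0$ a.s., hence $\sup_{t\le T}|\langle X\rangle^{\pi_n}_t-[X]_t|\to 0$ a.s.; letting $T\to\infty$ along a countable set and recalling that $[X]$ is continuous and increasing then yields $X(\omega)\in\cQ^{\Pi}$ with $\langle X(\omega)\rangle^{\Pi}=[X](\omega)$ for a.e.\ $\omega$.

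The essential preliminary is to manufacture enough integrability, exploiting that the assertion is an a.s.\ statement and hence stable under prelocalization and under an equivalent change of measure. First, by a standard prelocalization (a continuous semimartingale is locally in $\cS^p$, cf.\ \'Emery) we may assume $X\in\cS^4$ on $[0,T]$. Next, with $K:=\sum_n O_T(X,\pi_n)<\infty$ a.s.\ (this is our hypothesis), we pass to the equivalent measure $\bQ$ whose density is a constant multiple of $e^{-K}$: this density is bounded, so $X$ remains in $\cS^4$, while $K$ now has all moments, in particular $K\in L^4$. Continuity of $X$ gives $\sup_{t\le T}|K^n_t|\le O_T(X,\pi_n)$, whence $\sum_n \sup_{t\le T}|K^n_t|\le K$ and $\sum_n \sup_{t\le T}|K^n_t|^2\le K^2$.

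Now apply the Burkholder--Davis--Gundy bound \eqref{GenBDG} with $p=2$ to $\int_0^\cdot K^n\,dX$ together with $(a+b)^2\le 2(a^2+b^2)$: up to an absolute constant, $\big\| \sup_{t\le T}\big|\int_0^t K^n\,dX\big| \big\|_{L^2}^2 \le \bE\!\int_0^T (K^n)^2\,d[M] + \bE\big(\int_0^T |K^n|\,d|V|\big)^2$. Summing over $n$ and using $\sum_n (K^n_t)^2\le K^2$, the elementary inequality $\sum_n a_n^2\le(\sum_n a_n)^2$ for $a_n\ge 0$, and $\sum_n \int_0^T |K^n|\,d|V|\le K\,|V|_T$, the total is bounded by a constant times $\bE\big(K^2([M]_T+|V|_T^2)\big)$, which is finite by the Cauchy--Schwarz inequality because $K\in L^4$ and $X\in\cS^4$. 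This is the desired summability.

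I expect the step carrying the real weight to be exactly the integrability engineering in the second paragraph: Proposition~\ref{ExistsQV} only controls $K^n$ \emph{pathwise}, through the oscillation $O_T(X,\pi_n)$, and to convert an $L^2$-type bound into an a.s.\ statement \emph{without passing to a subsequence} one must simultaneously arrange fourth moments for $K$ and for $X$ --- the former via the exponential change of measure (choosing the density bounded precisely so that $X\in\cS^4$ survives), the latter via prelocalization --- while checking that neither reduction enlarges the exceptional null set. Once this is set up, the BDG estimate, the interchange of summation and integration (all integrands being non-negative), and the passage $T\to\infty$ are routine.
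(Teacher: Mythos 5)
Your proposal is correct and follows essentially the same route as the paper's proof: the identity \eqref{QVDiffInt} with $K^n=H^n-X$, prelocalization to $\cS^4$, the change of measure with density proportional to $e^{-K}$ to get $K\in L^4$, and the BDG estimate \eqref{GenBDG} yielding fast $L^2$ convergence of $\sup_{t\le T}|\int_0^t K^n\,dX|$ via the bound $\bE(K^2([M]_T+|V|_T^2))$. No substantive differences to report.
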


\begin{theorem}[F\"ollmer \cite{fol81}]\label{fol} 
If $x\in\cQ$, $g\in C^1$ and $t\in [0,\infty)$ the limit
\begin{align}
\label{DefFolInt}
\lim_n \sum_{t_j\in \pi_n}g(x_{t_j})(x_{t_{j+1}\wedge t}-x_{t_{j}\wedge t})
\end{align} 
exists uniformly on compacts and defines a continuous function of $t$ denoted  $\int_0^tg(x_s)dx_s$. This integral satisfies It\^o's formula: for $f\in C^2(\bR)$,
\be f(x_t)-f(x_0)=\int_0^tf^\prime(x_s)dx_s+\frac{1}{2}\int_0^tf^{\prime\prime}(x_s)d\langle x\rangle_s.\label{ito}\ee
\end{theorem}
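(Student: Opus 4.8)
The plan is to establish It\^o's formula \eqref{ito} directly by a second-order Taylor expansion along $\pi_n$, and then to read off the existence and continuity of the integral \eqref{DefFolInt} as a by-product. Fix $T<\8$ and $f\in C^2$, work on $[0,T]$, write $\pi_n=(t_j)_j$ and $\Delta_j^nx:=x_{t_{j+1}\wedge t}-x_{t_j\wedge t}$. Telescoping the (finite) sum $f(x_t)-f(x_0)=\sum_j(f(x_{t_{j+1}\wedge t})-f(x_{t_j\wedge t}))$ and applying Taylor's theorem with Lagrange remainder to each term,
\[
f(x_t)-f(x_0)=\sum_j\Bigl(f'(x_{t_j\wedge t})\,\Delta_j^nx+\tfrac12 f''(x_{t_j\wedge t})(\Delta_j^nx)^2+r_j^n\Bigr),
\]
where $r_j^n=\tfrac12\bigl(f''(\xi_j^n)-f''(x_{t_j\wedge t})\bigr)(\Delta_j^nx)^2$ for some $\xi_j^n$ between $x_{t_j\wedge t}$ and $x_{t_{j+1}\wedge t}$.

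Then I would handle the three resulting sums, all uniformly in $t\in[0,T]$. The quadratic sum converges to $\tfrac12\int_0^t f''(x_s)\,d\langle x\rangle_s$ by \eqref{QVInt} applied to the continuous function $f''$ on $[\udl x_T,\ovl x_T]$. For the remainder, note $|\xi_j^n-x_{t_j\wedge t}|\le|\Delta_j^nx|\le O_T(x,\pi_n)$ (using continuity of $x$ for the last, incomplete interval), so if $\varpi$ is a modulus of continuity of $f''$ on $[\udl x_T,\ovl x_T]$ then
\[
\sum_j|r_j^n|\le\tfrac12\,\varpi\bigl(O_T(x,\pi_n)\bigr)\sum_j(\Delta_j^nx)^2=\tfrac12\,\varpi\bigl(O_T(x,\pi_n)\bigr)\langle x\rangle^{\pi_n}_t ;
\]
since $\langle x\rangle^{\pi_n}_t\le\langle x\rangle^{\pi_n}_T$ is bounded in $n$ (it converges to $\langle x\rangle_T$) and $O_T(x,\pi_n)\to0$, this tends to $0$ uniformly in $t$. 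Hence the first sum $\sum_j f'(x_{t_j\wedge t})\,\Delta_j^nx$ converges, uniformly in $t\in[0,T]$, to $f(x_t)-f(x_0)-\tfrac12\int_0^t f''(x_s)\,d\langle x\rangle_s$.

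To finish, given an arbitrary $g\in C^1$ put $f(y):=\int_0^y g(z)\,dz\in C^2$, so $f'=g$; the previous step then shows that the limit \eqref{DefFolInt} exists, uniformly on compacts, and equals $f(x_t)-f(x_0)-\tfrac12\int_0^t f''(x_s)\,d\langle x\rangle_s$. Denoting it $\int_0^t g(x_s)\,dx_s$, continuity in $t$ follows either because it is a locally uniform limit of the continuous functions $t\mapsto\sum_j g(x_{t_j})\Delta_j^nx$, or directly from that identity, $x$ and $\langle x\rangle$ being continuous; and reading the identity back with a general $f\in C^2$, $g=f'$, gives \eqref{ito}. I expect the one genuinely delicate point to be the uniformity in $t$: one must check that truncating the incomplete last interval of $\pi_n$ at $t$ spoils none of the three limits, and this is precisely where the hypothesis $O_T(x,\pi_n)\to0$ — rather than merely a vanishing mesh — is used, both in the remainder bound above and in reducing \eqref{QVInt} and the convergence $\langle x\rangle^{\pi_n}\to\langle x\rangle$ to their uniform-in-$t$ versions (via Scholium \ref{Polya}).
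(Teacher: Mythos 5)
Your proposal is correct and follows essentially the same route as the paper's proof: telescoping $f(x_t)-f(x_0)$ along $\pi_n$, a second-order Taylor expansion whose quadratic term converges via \eqref{QVInt} and whose remainder is controlled by a modulus of continuity of $f''$ times the (bounded) discrete quadratic variation, and then obtaining \eqref{DefFolInt} by difference, taking $f$ an antiderivative of $g$. Your explicit attention to the truncated last interval and the role of $O_T(x,\pi_n)\to 0$ is exactly the point the paper handles with its bound \eqref{SumPhi} and Scholium \ref{Polya}.
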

 Notice that the series defining the F\"ollmer integral in \eqref{DefFolInt} and later in this paper are in fact finite sums, since every partition is finite on compacts.

\begin{proof}  
By using the second order Taylor's expansion  write 
 \begin{align}
\label{TelSumf}
\sum_{t_j\in \pi_n}f(x_{t_{j+1}\wedge t})-f(x_{t_{j}\wedge t})
\end{align} 
as
\be
\sum_{t_j\in \pi_n} f^\prime(x_{t_j})(x_{t_{j+1}\wedge t}-x_{t_{j}\wedge t})+
\frac{1}{2}\sum_{t_j\in \pi_n} f^{\prime\prime}(x_{t_j})(x_{t_{j+1}\wedge t}-x_{t_{j}\wedge t})^2+ C_n(t)
\label{taylor} \, , \, \ee
 where the correction term $C_n(t)$ is bounded by
 \begin{align}
\label{SumPhi}
  \sum_{t_j\in \pi_n} \phi(|x_{t_{j+1}\wedge t}-x_{t_j\wedge t}|) (x_{t_{j+1}\wedge t}-x_{t_j \wedge t})^2 
\end{align} 
 for some increasing function $\phi$ which is  continuous at $0$ and such that $\phi(0)=0$. Since $x\in\cQ^{(\pi_n)_n}$, the  term \eqref{SumPhi} converges to $0$ (for $t=T $, and thus also uniformly in $t \leq T$). Since \eqref{QVInt} states that the second term of \eqref{taylor} converges to the last term of  \eqref{ito} uniformly on compacts, by difference the first term of \eqref{taylor} also converges,  uniformly on compacts; moreover \eqref{ito} holds since the telescopic sum \eqref{TelSumf} equals $f(x_t)-f(x_0)$. 
\end{proof} 

Remark that F\"ollmer \cite{fol81} considers sums of the form 
 \[ \sum_{\pi_n \ni t_j\leq t} g(x_{t_j})(x_{t_{j+1}}-x_{t_j}) \,  \text{  and   }  \, \sum_{\pi_n \ni t_j\leq t} g(x_{t_j})(x_{t_{j+1}}-x_{t_j})^2 ,\]
whereas we consider 
\begin{align}
\label{OurSums}
\sum_{t_j \in \pi_n}g(x_{t_j})(x_{t_{j+1}\wedge t}-x_{t_{j}\wedge t})  \,  \text{  and   }  \, \sum_{t_j \in \pi_n}g(x_{t_j})(x_{t_{j+1}\wedge t}-x_{t_{j}\wedge t})^2  . 
\end{align} 
Since the difference between these two sums is
 \[  g(x_{t_i}) (x_{t_{i+1}}-x_t)  \,  \text{  and   }  \, g(x_{t_i})((x_{t_{i+1}}-x_{t_i})^2 - (x_{t_{i+1}}-x_t)^2 )   \]
(where $i:=\max\{j: \pi_n \ni t_j\leq t\}$),  which goes to zero as $O_T(x,\pi_n)\to 0$, these expressions are equal in the limit. The reason we prefer to use \eqref{OurSums} is that it involves only non-anticipative quantities (i.e. their value of time $t$ does not depend on the value of $x$ at later times), which better fits with the theory of stochastic integration and thus allows us to obtain formulae like \eqref{QVDiffInt} and \eqref{LocTDifInt}.

\section{Pathwise local time} \label{sec-plt}

\bigskip\noindent As already suggested in \cite{fol81}, there should be an extension of `It\^o formula' valid also when $f''$ is not a continuous functions, as it is in \eqref{ito}. In the theory of continuous semimartingales, such an extension proceeds via local times and the Tanaka-Meyer formula; what follows is a pathwise version.

  \noindent If $f'_-$  is the left-derivative of a convex function $f$, and $f''$ is the second derivative of $f$ in the sense of distributions (i.e. the unique positive Radon measure which satisfies $f''([a,b))=f'_-(b)-f'_-(a)$)
we obtain for $a\leq b$
\bstar f(b)-f(a)&=& \int_a^b f\p_-(y)dy = \int_a^b\left(f\p_-(a)+\int_{[a,y)}f\pp(du)\right)dy \\
&=& f\p_-(a)(b-a)+\int_{[a,b)}(b-u)f\pp(du) \quad \textrm{, so that}\\
f(b)-f(a) &=& f\p_-(a)(b-a)+\int_{-\infty}^\infty\1_{[a\wedge b,a\vee b)}(u)|b-u|f\pp(du),\quad \forall a,b\in \bR.
\estar
So if given a function $x$  and a partition $\pi=(t_j)_j$,  
we set for $u,v \in \mathbb{R}$
\begin{align*}
\llbracket   u,v \rrparenthesis  := \begin{cases}
                                     [u,v), & \text{if } u \leq v,\\
                                     [v,u), & \text{if } u > v, 
                                   \end{cases}
\end{align*}
and define the discrete local time (along $\pi$) as
\be \label{Lpi}
\textstyle L^{\pi}_t(u):=2\sum_{t_j\in \pi} \1_{   \llbracket x_{{t_j}\wedge t}, x_{{t_{j+1}\wedge t}} \rrparenthesis}(u)|x_{t_{j+1}\wedge t} -u|,\ee
then, if $f$ equals the \emph{difference of} two convex functions, 
we have  the following discrete Tanaka-Meyer formula 
\be \label{nTanaka} 
f(x_t)-f(x_0) =\sum_{t_j\in \pi} f\p_-(x_{t_j})(x_{t_{j+1}\wedge t}-x_{t_j\wedge t}) + 
\frac{1}{2}\int_{\bR} L^{\pi}_t(u)f''(du) . \ee

 A simple but important remark is that only the values of $f$ in the compact interval $ [\udl{x}_t,\ovl{x}_t]$ are relevant.
Note that $L^{\pi}_t(u)=0$ for $u\notin [\udl{x}_t,\ovl{x}_t]$ and $L^{\pi}_t(\cdot)$ is c{\`a}dl{\`a}g, thus it is bounded; in particular $L^{\pi}_t(\cdot)$ is $f''$-integrable. 

In the remainder of this  section we will restrict our attention to those functions whose second derivative is not a general Radon measure but instead one which admits a density with respect to the Lebesgue measure. Thus, the underlying measure space will be $\bR$ with its Borel sets, endowed with the Lebesgue measure $\cL^1(du)$ (sometimes denoted simply by $du$).
We will consider $L^{\pi_n}_t(\cdot)$ as a function in $L^p$, and denote by
$W^{k,p}$ the (Sobolev) space of functions whose $k^{th}$ derivative in the sense of distributions is in $L^p$; i.e., $W^{1,p}$ is the set of absolutely continuous functions whose classical derivative (which exists a.e.) belongs to  $W^{0,p}=L^{p}$, and $W^{2,p}$ is the set of $C^1$ functions whose classical derivative belongs to $W^{1,p}$. 
The following is our main theorem in this section. 

\begin{theorem}
\label{MainEquiv}
Let $x$ be continuous function and fix a sequence of partitions $\Pi=(\pi_n)_n$ such that $O_t(x,\pi_n)\to 0$ as $n\to \8$ for all $t\in [0,\infty)$. Then, for  $1<p<\8$, $q=p/(p-1)$, the following are equivalent:
\begin{enumerate}
\item $\sum_{t_j\in\pi_n} g(x_{t_j})(x_{t_{j+1}\wedge t}-x_{t_j \wedge t}) $ converges
for every $g\in W^{1,q}$ and $t\in [0,\infty)$ to a continuous function of $t$, which we denote by $\int_0^t g(x_s) dx_s$.
\item  $\sum_{t_j\in\pi_n} g(x_{t_j})(x_{t_{j+1}\wedge \cdot}-x_{t_j \wedge \cdot}) $ converges uniformly on compacts for every $g\in W^{1,q}$.
\item \label{EqContMain} $(L^{\pi_n}_t)_n$ converges weakly in $L^p$ to some  $L_t$ for all $t\in [0,\infty)$, and  $[0,\infty) \ni  t\mapsto L_t\in L^{p}$ is continuous if $L^p$ is endowed with the weak topology.
\item For all $t\in [0,\infty)$ there exists $L_t\in L^p$ s.t. $ \int_{\bR} L^{\pi_n}_\cdot(u)h(u)du \to \int_{\bR} L_\cdot(u)h(u)du$  uniformly on compacts for every $h\in L^{q}$.
\item \label{BddQEq}  $x\in\cQ^{\Pi}$ and for all $M\in [0,\infty)$ there exists $t\geq M$ such that $(L^{\pi_n}_t)_n$ is bounded in $L^{p}$.
\end{enumerate} 
If the above conditions are satisfied then 
 $(L^{\pi_n}_t)_n$ is bounded in $L^{p}$ for all $t\in [0,\infty)$, and
 for all $f\in W^{2,q}$ and $t\in [0,\infty)$
\be  f(x_t)-f(x_0)=\int_0^tf^\prime(x_s)dx_s+\frac{1}{2}\int_{\bR} L_t(u)f''(u) du , \label{itoLT}\ee
and for all Borel bounded $h$
\be \int_0^t h(x_s)d\langle x\rangle_s= \int_{\bR} L_t(u) h(u)  du\,  .\label{od} \ee
The statements above  hold for $p=\8, q=1$ if the weak topology on $L^p$  is replaced by the weak$^*$ topology on $L^\8$.
Moreover, they also hold  for $p=1, q=\8$ if in item \ref{BddQEq} boundedness in $L^p$  is replaced by equintegrability.
 \end{theorem}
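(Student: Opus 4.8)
The plan is to establish the cycle of implications $(2)\Rightarrow(1)\Rightarrow(5)\Rightarrow(3)\Rightarrow(4)\Rightarrow(2)$, and then derive the concluding formulae \eqref{itoLT} and \eqref{od}. The engine of the whole argument is the discrete Tanaka--Meyer identity \eqref{nTanaka}, which for $f\in W^{2,q}$ (so $f'\in W^{1,q}$ and $f''\in L^q$) reads
\begin{align*}
f(x_t)-f(x_0)=\sum_{t_j\in\pi_n} f'(x_{t_j})(x_{t_{j+1}\wedge t}-x_{t_j\wedge t})+\tfrac12\int_{\bR}L^{\pi_n}_t(u)f''(u)\,du,
\end{align*}
together with the elementary observation that every $g\in W^{1,q}$ is of the form $f'$ for some $f\in W^{2,q}$ (antiderivative). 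This identity lets me trade the convergence of Riemann sums for $g$ against the weak-$L^p$ convergence of $L^{\pi_n}_t$ paired with $g'=f''\in L^q$, since the left side $f(x_t)-f(x_0)$ is a fixed (continuous) function of $t$. Thus $(1)\Leftrightarrow$ ``$\int_{\bR}L^{\pi_n}_t(u)h(u)\,du$ converges pointwise in $t$ for all $h\in L^q$'', and similarly with uniform-on-compacts convergence, which immediately handles $(1)\Leftrightarrow(4)$ modulo the continuity-in-$t$ statement. The step $(4)\Rightarrow(2)$ is then just reading the Tanaka identity backwards; $(2)\Rightarrow(1)$ is trivial.

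For the remaining links I would argue as follows. $(1)\Rightarrow(5)$: taking $g(y)=y$ (which lies in $W^{1,q}_{\mathrm{loc}}$, and only local behaviour on $[\udl x_t,\ovl x_t]$ matters, cf.\ the remark after \eqref{nTanaka}) gives convergence of $\int_0^t x_s\,dx_s$, hence via \eqref{QVDiffInt}-type bookkeeping $\langle x\rangle^{\pi_n}_t$ converges to a continuous limit, i.e.\ $x\in\cQ^\Pi$; boundedness of $(L^{\pi_n}_t)_n$ in $L^p$ for \emph{every} $t$ follows from the Banach--Steinhaus (uniform boundedness) principle, since by (1) the linear functionals $h\mapsto \int L^{\pi_n}_t h$ are pointwise bounded on $L^q$. $(5)\Rightarrow(3)$: given $x\in\cQ^\Pi$ and an $L^p$-bound at some $t\ge M$, note $\|L^{\pi_n}_s\|_{L^1}=\langle x\rangle^{\pi_n}_s$ is controlled (from \eqref{od}-type identity with $h\equiv1$, which holds at the discrete level) and the support of $L^{\pi_n}_s$ sits in the fixed compact $[\udl x_t,\ovl x_t]$; I would identify the weak-$L^p$ limit by testing against the dense class of $h=f''$, $f\in C^2_c$, where the Tanaka identity combined with F\"ollmer's Theorem \ref{fol} (valid since $x\in\cQ$) pins down $\lim_n\int L^{\pi_n}_t h$ as $\int_0^t h(x_s)\,d\langle x\rangle_s$; this also gives \eqref{od} for continuous $h$, and the monotonicity $s\le t\Rightarrow L^{\pi_n}_s\le L^{\pi_n}_t$ (visible from \eqref{Lpi}) plus continuity of $s\mapsto\langle x\rangle_s$ forces weak continuity of $s\mapsto L_s$ by a Dini/Polya-type argument (Scholium \ref{Polya}). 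The passage from bounded-at-one-point to convergent-everywhere is the delicate point: I would use that for $s<t$ the difference $L^{\pi_n}_t-L^{\pi_n}_s\ge0$ and $\int(L^{\pi_n}_t-L^{\pi_n}_s)=\langle x\rangle^{\pi_n}_{(s,t]}$, so weak relative compactness propagates along $t$ together with an equicontinuity estimate coming from $O_T(x,\pi_n)\to0$.

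Finally, the concluding formulae: \eqref{itoLT} is the limit of the discrete Tanaka identity once $(1)$ and $(4)$ are in force ($f'\in W^{1,q}$ gives the Riemann-sum convergence, $f''\in L^q$ pairs against the weak limit $L_t$); \eqref{od} for general bounded Borel $h$ follows from the continuous case by a monotone-class / dominated-convergence argument, using $\int_{\bR}L_t(u)\,du=\langle x\rangle_t<\infty$ as the dominating finite measure and that $u\mapsto L_t(u)$ is supported in a compact set. For the endpoint cases: $p=\infty,q=1$ is identical with the weak-$*$ topology on $L^\infty$ throughout (Banach--Steinhaus still applies, as does the identification of limits on the dense set $C^2_c$); for $p=1,q=\infty$ the only change is that norm-boundedness in $L^1$ does not give weak sequential compactness, so in item $(5)$ one must assume uniform integrability (equi-integrability), after which the Dunford--Pettis theorem restores weak compactness and the rest of the argument goes through verbatim. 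The main obstacle, as indicated, is the implication $(5)\Rightarrow(3)$ — upgrading boundedness of $(L^{\pi_n}_t)_n$ at a single time to weak convergence at \emph{all} times with a weakly continuous limit — which is where the full strength of $x\in\cQ^\Pi$, the monotonicity of $L^{\pi_n}$ in $t$, and the oscillation control $O_T(x,\pi_n)\to0$ must be combined.
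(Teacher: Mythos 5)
Your plan follows essentially the same route as the paper's proof, which is organised into Lemmas \ref{LsubQ}, \ref{EquivCondLT} and \ref{contL} rather than a single cycle but has the same content: the discrete Tanaka identity \eqref{nTanaka} to convert between Riemann sums for $g=f'\in W^{1,q}$ and pairings $\int_{\bR} L^{\pi_n}_t f''\,du$; Banach--Steinhaus for the $L^p$-bounds; identification of the weak limit through the occupation measure $\nu_t$ by testing against continuous $h$ via F\"ollmer's Theorem \ref{fol}; Polya's Scholium \ref{Polya} for uniformity in $t$; and Banach--Alaoglu resp.\ Dunford--Pettis at the endpoints $p=\8$ and $p=1$. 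Your dense-class identification of the limit (boundedness plus convergence on $\{f'': f\in C^2_c\}$) is a mild variant of the paper's reflexivity-plus-subsequence argument, but it buys nothing essentially new.

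The one step that would fail as written is the claim that $s\le t$ implies $L^{\pi_n}_s\le L^{\pi_n}_t$, ``visible from \eqref{Lpi}'': this is false pointwise in $u$, because the increment in progress at time $s$ can be undone later. For instance, if $x_{t_j}\le u<x_s$ but $x$ has fallen below $x_{t_j}$ by time $t<t_{j+1}$, the term indexed by $t_j$ contributes $2|x_s-u|>0$ at time $s$ and $0$ at time $t$. Similarly $\int_{\bR}(L^{\pi_n}_t-L^{\pi_n}_s)\,du$ equals the increment of $\langle x\rangle^{\pi_n}$ only up to boundary corrections. What is true, and what the paper uses, is the decomposition \eqref{LpiUpDo}: $L^{\pi_n}_t(u)/2$ is the sum of two terms (over completed up- and downcrossings) which \emph{are} increasing in $t$, plus a remainder $\1_{[x_{t_J},x_t)}(u)\,|x_t-u|$ bounded by $O_t(x,\pi_n)$, which vanishes uniformly on compacts and lives on the fixed compact $[\udl{x}_T,\ovl{x}_T]$. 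With monotonicity replaced by this ``monotone up to a vanishing error'' statement, your two uses of it — propagating the $L^p$-bound in item \ref{BddQEq} from one time $t\ge M$ down to all earlier times by pointwise domination, and the Dini/Polya argument for uniform-in-$t$ convergence — both go through, and they then coincide exactly with the paper's argument; so this is a repairable inaccuracy rather than a structural flaw, but the repair (i.e.\ \eqref{LpiUpDo}) must be made explicit since it is precisely the crux of the step you yourself single out as delicate.
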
 

In Theorem \ref{MainEquiv} we  slightly modify\footnote{Indeed \cite{wur80} does not require $t\mapsto L_t$ to be continuous, and considers strong convergence in $L^2$ instead of weak convergence in $L^p$.} the setting of \cite{wur80} in order to obtain a stronger theorem with equivalent conditions; the main novelty is that item \ref{BddQEq} implies the others. In particular we can exactly describe the difference between functions that only admit (pathwise) quadratic variation and the ones that also have  local time. In Example \ref{QVnoLT} below we show that the two notions are strictly different and give an explicit construction of a path which admits quadratic variation but not a pathwise local time.

We will henceforth denote by $\cL_p$ the space of continuous functions $x$ for which the equivalent conditions of Theorem \ref{MainEquiv}  hold. We will call $L_t(u)$ the \emph{pathwise local time of $x$} at time $t$ at level $u$. Observe that $\cL_p$ and $L_t(u)$ a priori depend on $\Pi=(\pi_n)_n$, and that $L_t(u)$ depends on $x$. We will  write $\cL_p^\Pi$, $L_t^{\Pi}(u)$, $L_t^x(u)$ or $L_t^{x,\Pi}(u)$ only when we want to highlight these dependencies; as in the remainder of this section  $\Pi=(\pi_n)_n$ will be fixed, we will never do that, and we  will simply write $L^n_t$  for $L^{\pi_n}_t$.

Notice that since $L^{n}_t(u)=0$ for $u\notin [\udl{x}_t,\ovl{x}_t]$, we can consider 
$L^{n}_t(u)$ as an element of $L^p(\mu)$ with $\mu$ being the restriction of the Lebesgue measure to 
$[\udl{x}_t,\ovl{x}_t]$. In particular,  Theorem \ref{MainEquiv} holds if $W^{k,q}$ is replaced with $W_{loc}^{k,q}$. Moreover, if $\hat{p}\leq p$, since $\mu$ is finite, $L^p(\mu)$ embeds continuously in $L^{\hat{p}}(\mu)$, and so  $\cL_p\subseteq \cL_{\hat{p}}$ and the  limits of $(L^{\pi_n}_t)_n$ in the weak $L^p$ and $L^{\hat{p}}$ topology coincide, so $L_t$ does not really depend on $p$.

Note also that for $x\in \cL_1$, using standard regularisation techniques, we can define a modification $(l_t)_t$ of the pathwise local time $(L_t)_t$ which is c\`adl\`ag and increasing in $t$ for a.e.\ $u$. The occupation time formula then extends to all Borel bounded $h$ 
\be \int_0^T h(t,x_t)d\langle x\rangle_t= \int_{\bR} \int_{0}^T  h(t,u) dl_t(u) du\, \label{od2}\ee

Finally, we show that if $x\in \cL_p$ the F\"ollmer integral is a continuous linear functional on $ W^{1,q}$. This fact could have been used to define F\"ollmer's integral for  $g\in W^{1,q}$ as the continuous extension of the  F\"ollmer's integral for  $g\in C^1$ defined in Theorem \ref{fol}, as done in \cite{ber87}. Note that the following result would not hold if we only assumed uniform convergence on compacts of $g_n$ to $g$.
\begin{proposition}
\label{Cont}
Let $p\in (1,\8]$ (resp. $p=1$) with conjugate exponent $q$. If $x\in \cL_p$, $g_n, g\in W^{1,q}$, $g_n(x_0)\to g(x_0)$ and  $g'_n\to g' $ in the weak (resp. weak$^*$) topology of $L^q$, then $\int_0^{t} g_n(x_s) dx_s \to \int_0^{t} g(x_s) dx_s$ for all $t\in [0,\infty)$, and  the convergence is  uniform on compacts if moreover
 $|g'_n|\to |g'| $ weakly (resp. weakly$^*$) in $L^q$.
\end{proposition}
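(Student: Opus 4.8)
The plan is to first turn the F\"ollmer integral into an \emph{exact} closed-form expression via the pathwise Tanaka--Meyer formula \eqref{itoLT}, and then to study the convergence of the two resulting terms separately. Given $g\in W^{1,q}$, any antiderivative $f_g$ of $g$ lies in $W^{2,q}$ with $f_g'=g$ and $f_g''=g'$, so \eqref{itoLT} together with $f_g(x_t)-f_g(x_0)=\int_{x_0}^{x_t}g(z)\,dz$ gives, for every $t$,
\[ \int_0^t g(x_s)\,dx_s \;=\; \Phi_g(t)-\ha\,\Lambda_g(t),\qquad \Phi_g(t):=\int_{x_0}^{x_t}g(z)\,dz,\quad \Lambda_g(t):=\int_{\bR}L_t(u)\,g'(u)\,du. \]
Thus the statement reduces to proving $\Phi_{g_n}\to\Phi_g$ and $\Lambda_{g_n}\to\Lambda_g$, pointwise in $t$ under the first set of hypotheses and uniformly on compacts under the additional one.

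For the pointwise convergence: $\Lambda_{g_n}(t)\to\Lambda_g(t)$ is immediate, since $L_t\in L^p$ is a continuous functional on $L^q$ and $g_n'\to g'$ weakly (resp.\ weak$^*$). For $\Phi$, I would write $g_n(y)=g_n(x_0)+\int_{x_0}^y g_n'$, test $g_n'$ against the bounded-interval indicator $\1_{\llbracket x_0,y\rrparenthesis}\in L^q$, and use $g_n(x_0)\to g(x_0)$, to get $g_n(y)\to g(y)$ for every $y$; since moreover $\sup_n\|g_n'\|_{L^q}<\8$ (a weakly, resp.\ weak$^*$, convergent sequence is bounded) and $\sup_n|g_n(x_0)|<\8$, the functions $g_n$ are uniformly bounded on the compact $[\udl{x}_t,\ovl{x}_t]$, so dominated convergence yields $\Phi_{g_n}(t)\to\Phi_g(t)$. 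This proves the first assertion.

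For the uniform convergence: the $\Phi$-term is handled by upgrading $g_n\to g$ to uniform convergence on $[\udl{x}_T,\ovl{x}_T]$ (whence $\sup_{t\le T}|\Phi_{g_n}(t)-\Phi_g(t)|\le(\ovl{x}_T-\udl{x}_T)\sup_{[\udl{x}_T,\ovl{x}_T]}|g_n-g|\to0$), which holds because $\{y\mapsto\int_{x_0}^y g_n'\}_n$ is equicontinuous there --- by H\"older's inequality if $1<p<\8$, by a Lipschitz bound if $p=1$, and by the Dunford--Pettis theorem (weak $L^1$-convergence implies equi-integrability) if $p=\8$ --- and pointwise convergent. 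The $\Lambda$-term is where the extra hypothesis $|g_n'|\to|g'|$ enters: it forces $(g_n')_\pm=\ha(|g_n'|\pm g_n')\to(g')_\pm$ weakly (resp.\ weak$^*$) in $L^q$, hence $\Lambda^{\pm}_{g_n}(t)\to\Lambda^{\pm}_{g}(t)$ for every $t$, where $\Lambda^{\pm}_{h}(t):=\int_{\bR}L_t(u)(h')_\pm(u)\,du$ and $\Lambda_h=\Lambda^+_h-\Lambda^-_h$. Now each $t\mapsto\Lambda^{\pm}_{g_n}(t)$ is non-decreasing --- using the modification $l_t$ of $L_t$ (equal to $L_t$ in $L^p$ for each $t$, non-decreasing in $t$ for a.e.\ $u$; see the remark preceding \eqref{od2}, noting $\cL_p\subseteq\cL_1$) together with $(g_n')_\pm\ge0$ --- and continuous, by the weak (resp.\ weak$^*$) continuity of $t\mapsto L_t$ from item \ref{EqContMain} of Theorem \ref{MainEquiv}; the same holds for the limits $\Lambda^{\pm}_g$. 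P\'olya's Scholium \ref{Polya} then promotes $\Lambda^{\pm}_{g_n}\to\Lambda^{\pm}_g$ to uniform convergence on $[0,T]$, so $\Lambda_{g_n}\to\Lambda_g$ uniformly on $[0,T]$ and the second assertion follows.

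The main obstacle is precisely this last uniformity: plain weak convergence of $g_n'$ in $L^q$ delivers convergence of $t\mapsto\int_{\bR}L_t(u)g_n'(u)\,du$ only for each fixed $t$, not uniformly in $t$ --- uniform convergence against the weakly compact family $\{L_t:t\le T\}$ genuinely fails for merely weakly convergent integrands (witness an orthonormal sequence in a Hilbert space) --- and the hypothesis $|g_n'|\to|g'|$ is exactly what allows passing to the non-negative parts $(g_n')_\pm$ and thereby invoking the monotone (Dini/P\'olya) convergence principle. Everything else is routine once \eqref{itoLT} and the equivalences of Theorem \ref{MainEquiv} are available.
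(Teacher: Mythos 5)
Your proof is correct and is essentially the paper's own argument: both reduce the statement to the Tanaka--Meyer formula \eqref{itoLT} applied to the antiderivatives $f_n,f$ of $g_n,g$, get the pointwise convergence from $f_n\to f$ pointwise together with weak (resp.\ weak$^*$) convergence of $g'_n$ tested against $L_t\in L^p$, and get the local uniformity by splitting $g'_n$ into positive and negative parts via $|g'_n|\to|g'|$ and invoking P\'olya's Scholium \ref{Polya}. The only difference is that you spell out details the paper leaves implicit (pointwise and locally uniform convergence of the antiderivatives, and the monotonicity in $t$ of $\int_{\bR}L_t(u)\max(\pm g'_n(u),0)\,du$), which is harmless.
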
 
\begin{proof}
Define $f(u):=\int_{x_0}^u g(y) dy$ and analogously $f_n$ from $g_n$, and notice that $f_n(u)\to f(u)$ for all $u \in \bR$, so Tanaka-Meyer formula \eqref{itoLT} gives the thesis. If moreover  $|g'_n| \to |g'| $ weakly in $L^q$ then since the positive part $\max(h,0)$ of $h$ equals $(h+|h|)/2$, Polya's scholium \ref{Polya} shows local uniformity of the convergence $$\int_{\bR}  L_t(u) \max(g'_n(u),0) du  \to \int_{\bR}  L_t(u) \max(g'(u),0) du \, ;$$ working analogously with the negative parts we get the thesis.
\end{proof}

In the rest of this section we establish Theorem \ref{MainEquiv} via a series of lemmas; if not explicitly stated otherwise, $p$ is assumed to be in $(1,\infty)$.
\begin{lemma}\label{LsubQ}  $x\in\cQ$ iff $\int_{\bR} L^n_t(u) du$ converges to a continuous function $\psi_t$ of $t\in [0,\infty)$. In this case the convergence is uniform on compacts and $\langle x\rangle=\psi$.
 \end{lemma}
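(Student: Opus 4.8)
The plan is to observe that $\int_{\bR} L^n_t(u)\,du$ is literally $\langle x\rangle^{\pi_n}_t$, after which the statement reduces to the facts about $\cQ$ already recorded after Definition \ref{def-qvp}.

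\emph{Step 1: an exact identity.} Fix $n$ and $t\ge 0$, write $\pi_n=(t_j)_j$, and for each $j$ abbreviate $a:=x_{t_j\wedge t}$ and $b:=x_{t_{j+1}\wedge t}$. Since $L^n_t(u)=0$ for $u\notin[\udl{x}_t,\ovl{x}_t]$ and $L^n_t(\cdot)$ is bounded, the integral is finite and, from \eqref{Lpi},
\[
\int_{\bR} L^n_t(u)\,du \;=\; 2\sum_{t_j\in\pi_n}\int_{\llbracket a,b\rrparenthesis}|b-u|\,du .
\]
For $u$ ranging over the interval $\llbracket a,b\rrparenthesis$, which has length $|b-a|$, the quantity $|b-u|$ ranges linearly over $[0,|b-a|)$ (whether $a\le b$ or $a>b$), so $\int_{\llbracket a,b\rrparenthesis}|b-u|\,du=\int_0^{|b-a|}s\,ds=(b-a)^2/2$. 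Hence each summand equals $(x_{t_{j+1}\wedge t}-x_{t_j\wedge t})^2$ and
\[
\int_{\bR} L^n_t(u)\,du \;=\; \sum_{t_j\in\pi_n}\big(x_{t_{j+1}\wedge t}-x_{t_j\wedge t}\big)^2 \;=\; \langle x\rangle^{\pi_n}_t ,
\]
an identity valid for every $n$ and every $t$ (the sum being finite, as $\pi_n$ is finite on compacts).

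\emph{Step 2: conclude.} Under the standing assumption $O_t(x,\pi_n)\to 0$ for all $t$, the discussion following Definition \ref{def-qvp} tells us that $x\in\cQ$ if and only if $\langle x\rangle^{\pi_n}_t$ converges pointwise in $t\in[0,\infty)$ to a continuous (necessarily increasing) function, in which case this limit is $\langle x\rangle$; and since the $\langle x\rangle^{\pi_n}_{\cdot}$ are increasing with continuous limit, Scholium \ref{Polya} upgrades the convergence to uniform on compacts. Combining this with the identity of Step 1 gives exactly the assertion: $\int_{\bR}L^n_t(u)\,du$ converges to a continuous function $\psi$ of $t$ iff $x\in\cQ$, and then the convergence is uniform on compacts and $\psi=\langle x\rangle$.

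There is no genuine obstacle here; the only point requiring a word of care is that $\langle x\rangle^{\pi_n}_t$ is not quite the cumulative distribution function $\mu_n([0,t])$ of $\mu_n$ (they differ by one boundary term of size at most $O_t(x,\pi_n)^2$), but this discrepancy is already dealt with in the text around \eqref{QVInt}, so it suffices to quote the equivalence stated there rather than reprove it.
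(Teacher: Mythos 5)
Your proof is correct and follows essentially the same route as the paper's: the paper obtains the identity $\int_{\bR}L^n_t(u)\,du=\langle x\rangle^{\pi_n}_t$ by applying \eqref{nTanaka} with $f(x)=x^2$, whereas you verify it by integrating \eqref{Lpi} directly, and both arguments then conclude via the equivalence recorded after Definition \ref{def-qvp} together with Scholium \ref{Polya}. One small slip: $\langle x\rangle^{\pi_n}_{\cdot}$ itself need not be increasing (only the distribution function $t\mapsto\mu_n([0,t])$ is), but your closing remark about the boundary term of size $O_t(x,\pi_n)^2$ supplies exactly the needed correction, as in the paper's discussion around \eqref{QVInt}.
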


\begin{proof}
Applying formula  \eqref{nTanaka} with $f(x)=x^2\in L^{\1}([\udl{x}_t,\ovl{x}_t])$ 
 we obtain 
 \begin{align}
\label{QVn}
\textstyle \sum_{t_j\in \pi_n} x^2_{t_{j+1}\wedge t}-x^2_{t_{j}\wedge t}-2x_{t_{j}}(x_{t_{j+1}\wedge t}-x_{t_{j}\wedge t})=\int_{\bR} L^n_t(u)  du 
\end{align} 
The statement follows rewriting the left side of \eqref{QVn}  as $\sum_{t_j\in \pi_n}(x_{t_{j+1}\wedge t}-x_{t_{j}\wedge t})^2$.
\end{proof}

 Given $x\in \cQ$, $\nu_t$  will denote the \emph{occupation measure of $(x_s)_{s\leq t}$} (along $\Pi$), defined on the Borel sets of $[0,t]$  by $\nu_t(A):=\int_0^t 1_{A}(x_s)d\langle x\rangle_s.$

\begin{lemma}
\label{EquivCondLT}
If $x\in\cQ$ and $t\in [0,\infty)$ the following are equivalent.
\begin{enumerate}
\item \label{FInt8} For every $g\in W^{1,q}$ the following sequence converges
\begin{align} \textstyle
\label{FolIntW11} \sum_{t_j\in\pi_n} g(x_{t_j})(x_{t_{j+1}\wedge t}-x_{t_j \wedge t}) .
\end{align} 

\item \label{WeakConv} The sequence $(L^n_t(\cdot))_n$ converges in the weak  topology of $L^p$ (to a quantity which we denote by $L_t(\cdot)$).
\end{enumerate} 
 
The above conditions imply that $(L^n_t(\cdot))_n$ is bounded in $L^{p}$ and \eqref{itoLT} holds.
Conversely, if $(L^n_t(\cdot))_n$ is bounded in $L^{p}$ and $x\in\cQ$ then items \eqref{FInt8} and \eqref{WeakConv} hold, and  $\nu_t$ has a density $L_t$ with respect to $\cL^1$.

\end{lemma}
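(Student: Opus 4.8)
The plan is to use the discrete Tanaka--Meyer formula \eqref{nTanaka} as a dictionary between the F\"ollmer-type sums in \eqref{FolIntW11} and the linear functionals $h\mapsto\int_{\bR}L^n_t(u)h(u)\,du$ on $L^q$, and then to extract everything from Banach--Steinhaus, the duality $(L^q)^*=L^p$, and (for the converse) F\"ollmer's Theorem \ref{fol}; here $t$ is fixed and one always works on the compact interval $[\udl{x}_t,\ovl{x}_t]$, off which $L^n_t$ vanishes. First I would rewrite the sum: given $g\in W^{1,q}$, set $f(u):=\int_{x_0}^u g(y)\,dy$, so $f\in W^{2,q}$ and, as $f''=g'\in L^1_{loc}$, $f$ is a difference of convex functions; hence \eqref{nTanaka} applies (with $f'_-=f'=g$) and rearranges to
\begin{align*}
\sum_{t_j\in\pi_n} g(x_{t_j})(x_{t_{j+1}\wedge t}-x_{t_j \wedge t}) = f(x_t)-f(x_0)-\tfrac12\int_{\bR}L^n_t(u)\,g'(u)\,du ,
\end{align*}
where $f(x_t)-f(x_0)$ does not depend on $n$. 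Conversely every $h\in L^q$ coincides on $[\udl{x}_t,\ovl{x}_t]$ with $g'$ for $g(u):=\int_{\udl{x}_t}^u h(y)\1_{[\udl{x}_t,\ovl{x}_t]}(y)\,dy\in W^{1,q}$, and then $\int L^n_t g'=\int L^n_t h$. So item \eqref{FInt8} is equivalent to the pointwise convergence on $L^q$ of the functionals $h\mapsto\int_{\bR}L^n_t(u)h(u)\,du$.

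From this the equivalence and its consequences follow quickly. If \eqref{WeakConv} holds, those functionals converge pointwise, hence \eqref{FInt8} holds. If \eqref{FInt8} holds, pointwise convergence of the functionals $L^n_t\in(L^q)^*=L^p$ on the Banach space $L^q$ forces $\sup_n\|L^n_t\|_{L^p}<\infty$ by Banach--Steinhaus, and the limit is a bounded functional on $L^q$, hence of the form $h\mapsto\int L_t h$ for some $L_t\in L^p$; thus $L^n_t\rightharpoonup L_t$ weakly in $L^p$, which is \eqref{WeakConv}, and $(L^n_t)_n$ is bounded. For \eqref{itoLT}, take $f\in W^{2,q}$, apply \eqref{nTanaka}, let $n\to\infty$, and use \eqref{FInt8} for $\sum f'(x_{t_j})(x_{t_{j+1}\wedge t}-x_{t_j\wedge t})\to\int_0^tf'(x_s)\,dx_s$ together with weak convergence for $\int L^n_t f''\to\int L_t f''$ (valid since $f''\in L^q$).

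For the converse implication I would bring in $x\in\cQ$. Assume $\sup_n\|L^n_t\|_{L^p}=:M<\infty$. For $h$ continuous on $[\udl{x}_t,\ovl{x}_t]$ pick $F\in C^2$ with $F''=h$; then \eqref{nTanaka} gives $\tfrac12\int_{\bR}L^n_t(u)h(u)\,du=F(x_t)-F(x_0)-\sum_{t_j}F'(x_{t_j})(x_{t_{j+1}\wedge t}-x_{t_j\wedge t})$, and since $F'\in C^1$, Theorem \ref{fol} and It\^o's formula \eqref{ito} yield $\int_{\bR}L^n_t(u)h(u)\,du\to\int_0^th(x_s)\,d\langle x\rangle_s=\int_{\bR}h\,d\nu_t$. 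Hence $h\mapsto\int h\,d\nu_t$ is a functional on $C([\udl{x}_t,\ovl{x}_t])$ with $|\int h\,d\nu_t|\le M\|h\|_{L^q}$, so it extends to $L^q$ and is integration against some $L_t\in L^p$; comparing with continuous test functions shows $\nu_t$ has density $L_t$ with respect to $\cL^1$. Finally, boundedness of $(L^n_t)_n$ plus the convergence $\int L^n_t h\to\int L_t h$ on the dense subset $C([\udl{x}_t,\ovl{x}_t])\subset L^q$ gives, by a routine approximation, $\int L^n_t h\to\int L_t h$ for all $h\in L^q$, i.e.\ $L^n_t\rightharpoonup L_t$ weakly in $L^p$; this is \eqref{WeakConv}, and \eqref{FInt8} follows from the equivalence already established.

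The analytic core (Banach--Steinhaus, reflexivity of $L^p$, density of continuous functions) is standard; the one place that needs care is the first step, namely checking that every invocation of \eqref{nTanaka} is for an admissible difference-of-convex $f$, that antiderivatives of $L^q$ functions genuinely lie in the paper's $W^{1,q}$, and that passing to the compact interval $[\udl{x}_t,\ovl{x}_t]$ (so that $g'$ may be prescribed arbitrarily there) loses nothing --- this bookkeeping, rather than any single estimate, is the real obstacle.
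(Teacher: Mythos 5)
Your proposal is correct, and its skeleton is the paper's: the discrete Tanaka--Meyer identity \eqref{nTanaka} applied to $f(u)=\int_{x_0}^u g(y)\,dy$ is exactly the dictionary the paper uses to translate item \eqref{FInt8} into pointwise convergence of the functionals $h\mapsto\int_{\bR}L^n_t h\,du$ on $L^q$ (your observation that any $h\in L^q$ can be realised as $g'$ on $[\udl{x}_t,\ovl{x}_t]$, where $L^n_t$ lives, is the same localisation the paper makes), Banach--Steinhaus gives the boundedness, \eqref{itoLT} follows by passing to the limit in \eqref{nTanaka} for $f\in W^{2,q}$, and in the converse direction both you and the paper first use Theorem \ref{fol} together with \eqref{nTanaka} to show $\int_{\bR}L^n_t h\,du\to\int h\,d\nu_t$ for continuous $h$. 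The one genuine divergence is how you then upgrade this to weak convergence of the whole sequence: the paper invokes reflexivity of $L^p$ to extract weakly convergent subsequences from the bounded sequence $(L^n_t)_n$ and identifies every subsequential limit with the density of $\nu_t$ via continuous test functions, whereas you avoid compactness altogether --- you bound the limit functional by $M\|h\|_{L^q}$, extend it from the dense subspace $C([\udl{x}_t,\ovl{x}_t])\subset L^q$ by Riesz representation to obtain $L_t\in L^p$ with $\nu_t=L_t\,du$, and conclude weak convergence by a $3\varepsilon$ density argument. Your variant is slightly more elementary (no weak sequential compactness) and, since it only needs $q<\infty$, it also covers the $p=\infty$/weak$^*$ case without the separate Banach--Alaoglu argument the paper gives in the proof of Theorem \ref{MainEquiv}; on the other hand it does not adapt to $p=1$ (continuous functions are not dense in $L^\infty$), which is where the paper's compactness route, upgraded to Dunford--Pettis under equi-integrability, is still needed --- but that case is outside the lemma as stated, so nothing is missing.
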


\begin{proof} 
The equivalence between items \eqref{FInt8} and \eqref{WeakConv}, and the fact that these imply \eqref{itoLT}, follows immediately applying \eqref{nTanaka} with $f(u):=\int_{x_*}^u g(y)dy$.
That item \eqref{WeakConv} implies the boundedness of $(L^n_t(\cdot))_n$ follows from Banach-Steinhaus Theorem. For the opposite implication notice that since $x\in\cQ$ we can use Theorem \ref{fol}, which together with \eqref{nTanaka}  shows that
\begin{align}
\label{Lambda} \textstyle
\exists \lim_n   \int_{\bR} L_t^n(u)h(u) du = \int_0^t h(x_s)d\langle x\rangle_s = \int h d\nu_t  \quad \text{ for all } h \in  C^0 \, .
\end{align} 

Since  $L^p$ is reflexive (see \cite[Theorem 4.10]{Bre10}), its unit ball is sequentially compact in the weak topology \cite[Theorem 3.18]{Bre10}, so we can get convergence of $L^n_t$ along some subsequence (of any subsequence) to some $L_t$ and all we have to show is that the limit does not depend on the subsequence.  Considering $(L^n_t)_n$ as elements of the measure space $([\udl{x}_t,\ovl{x}_t],\cL^1)$ we have that $C^0\subseteq L^p$, so $\int g(u) \nu_t (du) =\int g(u)L_t(u) du$ for all continuous $g$. Thus $L_t(u) du=\nu_t(du)$; in particular the limit $L_t$ does not depend on the subsequence, proving  item \eqref{WeakConv}.
\end{proof} 

\begin{lemma}
\label{contL}
If the equivalent conditions \ref{FInt8} and \ref{WeakConv} of Lemma \ref{EquivCondLT} are satisfied  for all $t\in [0,\infty)$, the following conditions are equivalent.
\begin{enumerate}
\item \label{FolCont} For every $g\in W^{1,q}$ the function $\int_0^t g(x_s) dx_s$ is continuous in $t\in [0,\infty)$.
\item \label{FolUnif}
For every $g\in W^{1,q}$ the convergence in \eqref{FolIntW11} is uniform on compacts.
\item \label{LTCont}  The map $[0,\infty) \ni  t\mapsto L_t(\cdot)\in L^{p}$ is continuous in the weak topology of $L^p$.
\item \label{LTUnif} For every $h\in L^{q}$ the convergence $ \int_{\bR} L^n_t(u)h(u)du \to \int_{\bR} L_t(u)h(u)du $ is uniform on compacts.
\end{enumerate} 
\end{lemma}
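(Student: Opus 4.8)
The plan is to establish the four conditions equivalent by closing the cycle $(2)\Rightarrow(1)\Rightarrow(3)\Rightarrow(4)\Rightarrow(2)$. The common tool is the identity produced by the discrete Tanaka--Meyer formula \eqref{nTanaka}: for $g\in W^{1,q}$ the primitive $f(u):=\int_{x_0}^u g(y)\,dy$ is a difference of two convex functions whose second derivative is the measure $g'(u)\,du$, so applying \eqref{nTanaka} along $\pi_n$ to this $f$ gives
\[
\sum_{t_j\in\pi_n}g(x_{t_j})(x_{t_{j+1}\wedge t}-x_{t_j\wedge t})=f(x_t)-f(x_0)-\tfrac12\int_{\bR}L^n_t(u)g'(u)\,du ,
\]
and, letting $n\to\8$ (by hypothesis, i.e.\ since items \ref{FInt8} and \ref{WeakConv} of Lemma \ref{EquivCondLT} hold for all $t$, the left side converges to $\int_0^tg(x_s)\,dx_s$ and $L^n_t\rightharpoonup L_t$ weakly in $L^p$ for each $t$),
\[
\int_0^tg(x_s)\,dx_s=f(x_t)-f(x_0)-\tfrac12\int_{\bR}L_t(u)g'(u)\,du .
\]
Because $t\mapsto f(x_t)$ is continuous and, as $g$ ranges over $W^{1,q}$, $g'$ exhausts $L^q$, the second identity yields $(1)\Leftrightarrow(3)$ at once (weak continuity of $t\mapsto L_t$ in $L^p$ is by definition the continuity of $t\mapsto\int_{\bR}L_t(u)h(u)\,du$ for every $h\in L^q$); the first identity, combined with $(4)$ applied to $h:=g'$, gives $(4)\Rightarrow(2)$; and $(2)\Rightarrow(1)$ is immediate, since the partial sums on the left of $(2)$ are continuous functions of $t$ and a locally uniform limit of continuous functions is continuous.

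So everything reduces to $(3)\Rightarrow(4)$. Here I would first split $h=\max(h,0)-\max(-h,0)$ to assume $h\ge0$, $h\in L^q$, and fix a horizon $T<\8$. The crucial observation, read off directly from \eqref{Lpi}, is that for $t\in[t_i,t_{i+1})$ with $t_i,t_{i+1}\in\pi_n$ one has
\[
L^n_t=L^n_{t_i}+2\,\1_{\llbracket x_{t_i},x_t\rrparenthesis}(\cdot)\,\abs{x_t-\cdot},\qquad L^n_{t_i}=2\!\!\sum_{\pi_n\ni t_j<t_i}\!\!\1_{\llbracket x_{t_j},x_{t_{j+1}}\rrparenthesis}(\cdot)\,\abs{x_{t_{j+1}}-\cdot},
\]
so $L^n_{t_i}$ is a sum of nonnegative functions, non-decreasing in $t_i$ along $\pi_n$. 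Setting $\phi_n(t):=\int_{\bR}L^n_t(u)h(u)\,du$, $\t_n(t):=\max\{t_j\in\pi_n:t_j\le t\}$ and $\psi_n(t):=\int_{\bR}L^n_{\t_n(t)}(u)h(u)\,du$, the function $\psi_n$ is non-decreasing and c\`adl\`ag on $[0,T]$, while the discarded part is controlled by the oscillation: since $\abs{x_t-x_{t_i}}\le O_T(x,\pi_n)$ and $\abs{x_t-u}\le O_T(x,\pi_n)$ on $\llbracket x_{t_i},x_t\rrparenthesis$, H\"older's inequality gives
\[
0\le\phi_n(t)-\psi_n(t)\le 2\,O_T(x,\pi_n)^{1+1/p}\,\norm{h}_{L^q}\longrightarrow0\qquad\text{uniformly in }t\in[0,T]
\]
(for $p\in\{1,\8\}$ read the exponent as $2$, resp.\ $1$). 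The hypothesis gives $\phi_n(t)\to\int_{\bR}L_t(u)h(u)\,du=:\phi(t)$ for every $t$, hence also $\psi_n(t)\to\phi(t)$; moreover $\phi$ is non-decreasing, because $L_t(u)\,du=\nu_t(du)$ increases with $t$, and it is continuous by $(3)$. Polya's scholium \ref{Polya} then upgrades $\psi_n\to\phi$ to uniform convergence on $[0,T]$, whence $\phi_n\to\phi$ uniformly on $[0,T]$; since $h\in L^q$ and $T<\8$ are arbitrary, this is exactly $(4)$.

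The one genuine obstacle is this last implication, and precisely the need to exhibit within it a quantity monotone in $t$ on which Polya's scholium can act: $t\mapsto\phi_n(t)$ is not monotone, and the device is to freeze $L^n_t$ at the last partition point $\t_n(t)\le t$ (where it is non-decreasing) and absorb the resulting overshoot into an error of order $O_T(x,\pi_n)^{1+1/p}$. Everything else---the Tanaka--Meyer bookkeeping, the identification $L_t(u)\,du=\nu_t(du)$ from Lemma \ref{EquivCondLT}, the H\"older estimate, and the surjectivity of $g\mapsto g'$ from $W^{1,q}$ onto $L^q$---is routine.
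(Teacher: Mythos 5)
Your proof is correct and takes essentially the same route as the paper's: the discrete identity \eqref{nTanaka} makes the Riemann sums and $\int_{\bR}L^{n}_t(u)g'(u)du$ differ by the $n$-independent continuous term $f(x_t)-f(x_0)$, the limit identity \eqref{itoLT} does the same for items \ref{FolCont} and \ref{LTCont}, and Polya's scholium \ref{Polya} yields \ref{LTCont}$\Rightarrow$\ref{LTUnif}. Your monotonization in that last step---freezing $L^n_t$ at the last partition point and absorbing the overshoot into an error of order $O_T(x,\pi_n)^{1+1/p}\norm{h}_{L^q}$---just spells out what the paper leaves implicit there (it is the same device, via \eqref{LpiUpDo}, used in the proof of Theorem \ref{MainEquiv}), and organizing the implications as a cycle rather than two equivalences plus a bridge is an immaterial difference.
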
 

 \begin{proof}
The identity \eqref{nTanaka} shows that items \eqref{FolUnif} and \eqref{LTUnif} are equivalent. The identity \eqref{itoLT} shows that \eqref{FolCont} and \eqref{LTCont} are equivalent. Trivially  item \eqref{FolUnif} implies item \eqref{FolCont}. Finally scholium \ref{Polya} shows that item \eqref{LTCont} implies item \eqref{LTUnif}.
\end{proof}

 \begin{proof}[Proof of Theorem \ref{MainEquiv}]
If item \ref{BddQEq} holds, since the last term in the decomposition \eqref{LpiUpDo} is bounded by $O_t(x,\pi)$ and the two sums are increasing in $t$, $(L^n_t)_n$ is bounded in $L^{p}$ for all $t\in  [0,\infty)$; moreover  Lemma \ref{EquivCondLT} shows that  for all  $h \in  L^q$
\begin{align}
\textstyle
\label{3eq}
\exists \lim_n   \int_{\bR} L_t^n(u)h(u) du = \int L_t(u) h(u)du=\int h d\nu_t  =\int_0^t h(x_s)d\langle x\rangle_s   \, ;
\end{align} 
Since \eqref{3eq} shows that $\int L_t(u) h(u)du $ is a continuous function of $t$,  Lemma \ref{contL} implies that item \ref{EqContMain} holds.
That item \ref{EqContMain} implies 
$x\in \cQ$ follows  applying Lemma \ref{LsubQ} since $\1_{[\udl{x}_t,\ovl{x}_t]}\in L^p$ and $L^n_t=0$ outside $[\udl{x}_t,\ovl{x}_t]$.
 Lemma \ref{EquivCondLT} states that $\nu_t$ has a density $L_t$; thus, formula \eqref{od} holds.
All other assertions follow directly from Lemmas \ref{EquivCondLT}  and \ref{contL}.

If $p=1$ or $p=\infty$ the proofs hold with the following minor modification in the part of the proof of Lemma \ref{EquivCondLT}  which deals with the sequential compactness of $(L^n_t)_n$.
If $p=\8$, the unit ball of $L^\8$ is sequentially compact
since it is compact  (and metrizable) in the
weak$^*$ topology because of Banach-Alaoglu Theorem (and since $L^1$ is separable), see \cite[Theorem 3.16]{Bre10} (and see \cite[Theorems 3.28 and 4.13]{Bre10}).
If $p=1$, since $L^n_t\geq 0$, Lemma \ref{LsubQ} implies that $(L^n_t)_n$ is bounded in $L^1$, so if $(L^n_t)_n$ is equintegrable then it is weakly sequentially compact (by the Dunford-Pettis Theorem, see \cite[Theorem 4.30]{Bre10}).
\end{proof} 
We end this section with the following 
\begin{example}
\label{QVnoLT}
There exists a function that admits pathwise quadratic variation but no pathwise local time.
\end{example} 
Put differently, we show that the inclusion $\cL_1^\Pi\subset \cQ^\Pi$ can be strict. This proves that the additional requirement in (5) in Theorem \ref{MainEquiv} is not automatic and is indeed needed. More precisely, we now construct a continuous function $x:[0,1]\to \bR$ and a sequence of refining partitions $\Pi=(\pi_n)_n$ of $[0,1]$ whose mesh is going to zero and such that 
$\langle x\rangle^{\pi_n}_{t}$ converges for all $t$ to the (continuous!) Cantor function $c(t)$ (a.k.a. the Devil staircase)  
but $(L_1^{\pi_n})_n$ does not converge weakly in $L^1(du)$; in particular $x$ has no pathwise local time along $(\pi_n)_n$, no matter which definition we use\footnote{Meaning that if one  replaced the weak topology of $L^1$ 
with any stronger topology (e.g. the weak/strong topology of $L^p$, or the uniform topology   as done in \cite[Definition 2.5]{PePr14}) one would still not obtain convergence of $L_t^{\pi_n}$.}. Our construction was inspired by a remark by Bertoin on page 194 of \cite{ber87}.

Divide $[0,1]$ into three equal subintervals and remove the middle one $I^1_1:=(\frac{1}{3},\frac{2}{3})$. Divide each of the two remaining closed intervals $[0,\frac{1}{3}]$ and $[\frac{2}{3},1]$ into three equal subintervals and remove the middle ones 
$I^2_1:=(\frac{1}{3^2},\frac{2}{3^2})$ and $I^2_2:=(\frac{7}{3^2},\frac{8}{3^2})$.
 Continuing in this fashion, at each step $i$ we remove the middle intervals 
$I^i_1, \ldots, I_{2^{i-1}}^i$, each of length $1/3^i$.
  The Cantor set is defined as $$C:= [0,1]\setminus \cup_{i=1}^{\infty} \cup_{j=1}^{2^{i-1}} I^i_j \, , $$
and the function which we will consider is $x(t):= \sqrt{2 \min_{s\in C} |s-t|}$. 
To construct our partitions $\pi_n$ of $[0,1]$ we define first a refining sequence 
$(\pi_{n,j}^i)_n$  of  Lebesgue partitions of  $I_{j}^i$ 
setting $\pi_{n,j}^i=(t^{k,i}_{n,j})_{k=0}^{2^{ni+1}}$ with $t^{0,i}_{n,j}=\inf I_j^i$ (so that $x(t^{0,i}_{n,j})=0$), and 
\begin{align} \textstyle
t^{k+1,i}_{n,j}:=\inf\{ t>  t^{k,i}_{n,j} :   x_t \in ( 2^{-ni} \sup_{t\in I_j^i} x(t))\bZ  ,\,   x_t \neq x_{t^{k,i}_{n,j}} \}    \, ,
\end{align} 
so that $t^{2^{ni+1},i}_{n,j}=\sup I_j^i $ and $|x_{t^{k+1,i}_{n,j}} - x_{t^{k,i}_{n,j}}|= 2^{-ni} \sup_{t\in I_j^i} x(t)=1/(\sqrt{3} 2^n)^{i}$  and so
\begin{align}
\label{QVNCantor} \textstyle
\langle x \rangle_{I^i_j}^{\pi_{n,j}^i}:= \sum_{k=0}^{2^{ni+1} -1}(x_{t^{k+1,i}_{n,j}} - x_{t^{k,i}_{n,j}})^2 = 2^{ni+1}  3^{-i} 2^{-2ni}=  3^{-i} 2^{1-ni}.
\end{align} 
Then define our refining sequence $(\pi_n)_n$ of partitions of $[0,1]$ whose mesh is going to zero setting  $\pi_n:=\{0,1\} \cup \cup_{i=1}^n \cup_{j=1}^{2^{i-1}} \pi_{n,j}^i $ and we set $\eps_n:=\frac{2}{2^n 3}$ so that as $n\to \infty$
\begin{align}
\label{QVCantor} \textstyle
\langle x \rangle_{1}^{\pi_{n}}= \sum_{i=1}^n \sum_{j=1}^{2^{i-1}}  \langle x \rangle_{I^i_j}^{\pi_{n,j}^i}   = 
 \sum_{i=1}^n (\eps_n)^{i}=  \frac{1- (\eps_n)^{n+1} }{1- \eps_n}\to 1 \, .
\end{align} 
Now, the Cantor function $c$ is defined on $[0,1]$ to be the only continuous  extension of the function $f$ which is defined on the set $D:=\{0,1\}\cup \cup_{i=1}^{\infty} \cup_{j=1}^{2^{i-1}} \bar{I}^i_j$ in this way\footnote{Such extension exists and is unique since $f$ is continuous and $D$ is dense in $[0,1]$.}: $f(0)=0,f(1)=1$, and each time we remove the middle third $I^i_j$ from a parent interval $J^i_j$, $f$ is defined on the closure $\bar{I}^i_j$ of $I^i_j$ to be the average of its values at the extremes of $J^i_j$ 
  (so $f=1/2$ on $\bar{I}^1_1$,  $f=1/4$ on $\bar{I}^2_1$ and $f=3/4$ on $\bar{I}^2_2$ etc.).

Since  the difference between $\langle x\rangle^{\pi_n}_{t} $ and the increasing function 
$ \sum_{\pi_n \ni t_j\leq t} (x_{t_{j+1}}-x_{t_j})^2$ is going to zero for all $t$  as $n\to \infty$, and since $c$ is continuous and increasing, to conclude
 that $\langle x\rangle^{\pi_n}_{t}\to c(t)$ for all $t$ it is enough to show it for all $t$ in the dense set $D$, see also Lemma \ref{ConvAtJumps} below. We already know this for $t=1$ and (trivially) for $t=0$. Since 
 $\langle x \rangle_{(0,\frac{1}{3}]}^{\pi_{n}}=\langle x \rangle_{(\frac{2}{3}, 1]}^{\pi_{n}}$, and  \eqref{QVNCantor} shows that
   $\langle x \rangle_{(\frac{1}{3},\frac{2}{3}]}^{\pi_{n}} \to 0$, \eqref{QVCantor} and \eqref{TriangleIneqfor<B>} give
 that  $\langle x \rangle_{t}^{\pi_{n}}\to 1/2=c(t)$ for all $t\in [\frac{1}{3},\frac{2}{3}]=\bar{I}^1_1$. Analogously 
 $\langle x \rangle_{(0,\frac{1}{9}]}^{\pi_{n}}=\langle x \rangle_{(\frac{2}{9}, \frac{1}{3}]}^{\pi_{n}}$, and  \eqref{QVNCantor} shows that
   $\langle x \rangle_{(\frac{1}{9},\frac{2}{9}]}^{\pi_{n}} \to 0$, so $\langle x \rangle_{\frac{1}{3}}^{\pi_{n}}\to 1/2$
    and \eqref{TriangleIneqfor<B>} give
 that  $\langle x \rangle_{t}^{\pi_{n}}\to 1/2^2=c(t)$ for all $t\in [\frac{1}{9},\frac{2}{9}]=\bar{I}^2_1$. In this way we see that  $\langle x\rangle^{\pi_n}_{t}\to c(t)$ for all $t\in D$ and thus for all $t\in [0,1]$.
 
To conclude, let us prove that the pathwise local time $L_1^{\pi_n}(u)$ converges to $0$ for all $u\neq 0$, so that $(L_1^{\pi_n})_n$ does not converge weakly in $L^1(du)$ (because otherwise, by the Dunford-Pettis Theorem \cite[Theorem 4.30]{Bre10}, it would be uniformly integrable and would thus converge \emph{to zero} strongly in $L^1(du)$, whereas we know that $\int_0^1 L_1^{\pi_n}(u) du=\langle x\rangle^{\pi_n}_{1}\to c(1)=1 $).
Since $x(t)\geq 0$ for all $t$, $L_1^{\pi_n}(u) =0$ for all $n$ if $u<0$.
For each $i,j$ the function $(x(t))_{t\in \pi_n \cap I^i_j}$ crosses\footnote{Meaning that either $x_{{t_k}}\leq u< x_{t_{k+1}} $ or $x_{t_{k+1}}\leq u< x_{t_{k}} $ where $(t_k)_k:=\pi_n \cap I^i_j$.} each level $u>0$ at most twice, and since $\sup \{ x(t): t\notin \cup_{i=1}^k  \cup_{j=1}^{2^{i-1}} I^i_j \}= 1/\sqrt{3^{k+1}}$ is strictly smaller than any $u>0$ for big enough $i=i(u)$, the number 
of times $(x(t))_{t\in \pi_n}$ crosses level $u>0$ 
is bounded above independently of $n$; since $O_1(x,\pi_n) = 1/(\sqrt{3}n)  \to 0 $ as $n\to \infty$, this implies that $L_1^{\pi_n}(u)\to 0$.

\section{Change of variables and time-change}\label{sec-cv}

 In applications to the study of variance derivatives, for example \cite{dor11}, one starts with a continuous positive price function $S$, and the `variance' is defined as the quadratic variation of the \emph{log price} $x=\log S$. In this connection it is useful to be able to change variables, and to relate for example the local time of $\log(x)$ with the one of $x$. We recall that, although being a semimartingale is preserved only by $C^2$ transformations, possessing a quadratic variation (in the sense of Definition \ref{def-qvp}) is more generally invariant under $C^1$ transformations; indeed  
 $f\in C^1$ and $x\in \cQ^\Pi$ imply $f(x)\in \cQ^\Pi$ and $\langle f(x)\rangle^\Pi_t = \int_0^t f^{'}(x_s)^2 \langle x\rangle^\Pi_s$ (see \cite[Proposition 2.2.10]{son06}).
We prove below a similar result for the pathwise local time (if $f$ is monotone), extending the $C^2$ case treated in \cite{dor11}; then we show that time-change preserves the pathwise local time.

For Propositions \ref{Sfx} and \ref{TimeChange} we consider a fixed  sequence of partition $(\pi_n)_n$ such that $O_t(x,\pi_n)\to 0$ as $n\to \8$ for all $t\in [0,\infty)$.

\begin{proposition}\label{Sfx} 
Let $x\in\cL_p$ and let $f:\bR\to\bR$ be $C^1$ and strictly monotone.
 Then $f(x)\in\cL_p$ and the pathwise local times of $x$ and $f(x)$ are related by
\be L^{f(x)}_t(f(u))=|f\p(u)|\,L^x_t(u).\label{Slt}\ee
\end{proposition}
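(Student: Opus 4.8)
The plan is to deduce the statement from the change-of-variables identity for discrete local times followed by passage to the limit. First I would record the elementary algebraic fact underlying everything: for a partition $\pi=(t_j)_j$ and a strictly monotone $C^1$ function $f$, the discrete local time $L^{f(x),\pi}_t$ of the path $f(x)$ can be rewritten in terms of that of $x$. Indeed, by the very definition \eqref{Lpi},
\[
L^{f(x),\pi}_t(v)=2\sum_{t_j\in\pi}\1_{\llbracket f(x_{t_j\wedge t}),\,f(x_{t_{j+1}\wedge t})\rrparenthesis}(v)\,\bigl|f(x_{t_{j+1}\wedge t})-v\bigr|,
\]
and since $f$ is a strictly monotone homeomorphism of $\bR$, the indicator $\1_{\llbracket f(a),f(b)\rrparenthesis}(f(u))$ equals $\1_{\llbracket a,b\rrparenthesis}(u)$; writing $v=f(u)$ and using $|f(b)-f(u)|=|f'(\xi)|\,|b-u|$ for an intermediate point $\xi$ between $u$ and $b$, one gets $L^{f(x),\pi}_t(f(u))=|f'(u)|L^{x,\pi}_t(u)+R^\pi_t(u)$, where the remainder $R^\pi_t(u)$ collects the error from replacing $f'(\xi)$ by $f'(u)$ on each summand and is controlled, uniformly in $u\in[\udl{x}_t,\ovl{x}_t]$, by $\omega_{f'}(O_t(x,\pi_n))\,L^{x,\pi_n}_t(u)$, where $\omega_{f'}$ is the modulus of continuity of $f'$ on the relevant compact interval (using that the increments $x_{t_{j+1}\wedge t}-x_{t_j\wedge t}$ appearing with a nonzero indicator are bounded by $O_t(x,\pi_n)$).

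Next I would pass to the limit. Since $x\in\cL_p$ we know $L^{x,\pi_n}_t\to L^x_t$ weakly in $L^p$ and, by Theorem \ref{MainEquiv}, $(L^{x,\pi_n}_t)_n$ is bounded in $L^p$; the change-of-variables map $u\mapsto f(u)$ being a $C^1$-diffeomorphism with $|f'|$ bounded and bounded away from $0$ on $[\udl{x}_t,\ovl{x}_t]$, multiplication by $|f'|$ and the pushforward by $f$ are bounded linear isomorphisms on $L^p$ (of the relevant finite-measure intervals), hence weakly continuous; so $|f'(\cdot)|L^{x,\pi_n}_t(\cdot)$ converges weakly in $L^p$ to $|f'(\cdot)|L^x_t(\cdot)$, and after the change of variable $v=f(u)$ this reads $L^{f(x),\pi_n}_t(\cdot)-R^{\pi_n}_t(f^{-1}(\cdot))\to$ the function $v\mapsto|f'(f^{-1}(v))|L^x_t(f^{-1}(v))$ weakly in $L^p$. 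Since $O_t(x,\pi_n)\to0$ gives $\|R^{\pi_n}_t\|_{L^p}\le\omega_{f'}(O_t(x,\pi_n))\sup_n\|L^{x,\pi_n}_t\|_{L^p}\to0$, the remainder drops out and $L^{f(x),\pi_n}_t$ converges weakly in $L^p$ to the asserted limit. This shows the condition in item \ref{WeakConv}-type of Lemma \ref{EquivCondLT} (equivalently item (3) of Theorem \ref{MainEquiv}) holds for $f(x)$; to upgrade to $f(x)\in\cL_p$ I would invoke Theorem \ref{MainEquiv} again — e.g. verify item (5): $f(x)\in\cQ^\Pi$ is the already-recalled $C^1$-invariance of quadratic variation (\cite[Proposition 2.2.10]{son06}), and boundedness of $(L^{f(x),\pi_n}_t)_n$ in $L^p$ for each $t$ follows from the displayed identity together with the $L^p$-boundedness of $(L^{x,\pi_n}_t)_n$. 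Finally the identity \eqref{Slt} is exactly the identification of the weak limit just obtained, evaluated at level $f(u)$.

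I expect the only real subtlety to be the remainder estimate, i.e. making precise that the Taylor/mean-value error $R^\pi_t(u)$ is genuinely negligible: one must check that on every summand carrying a nonzero indicator the point $u$ and the endpoints $x_{t_j\wedge t},x_{t_{j+1}\wedge t}$ all lie within $O_t(x,\pi_n)$ of each other, so that $|f'(\xi)-f'(u)|\le\omega_{f'}(O_t(x,\pi_n))$ uniformly, and that summing against $|x_{t_{j+1}\wedge t}-u|$ reproduces (a constant times) $L^{x,\pi_n}_t(u)$ rather than something larger; this is where the precise bookkeeping lives. Everything else — weak continuity of the bounded operators "multiply by $|f'|$" and "push forward by $f$", and the appeal to Theorem \ref{MainEquiv} to close the loop — is routine. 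One should also note at the outset that, as remarked after \eqref{nTanaka}, only the values of $f$ on the compact $[\udl{x}_t,\ovl{x}_t]$ matter, so all the moduli of continuity and operator bounds above are finite.
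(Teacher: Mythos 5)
Your proposal is correct and follows essentially the same route as the paper's proof: the same discrete change-of-variables decomposition of $L^{f(x),\pi_n}_t(f(u))$ via the mean value theorem on the terms with nonzero indicator, the same remainder bound by the modulus of continuity of $f'$ on $[\udl{x}_t,\ovl{x}_t]$ at scale $O_t(x,\pi_n)$ times $L^{x,\pi_n}_t(u)$, and the same passage to the limit using $L^p$-boundedness and weak convergence (you are in fact more explicit than the paper about closing the loop through Theorem \ref{MainEquiv}). One small overstatement: a strictly monotone $C^1$ function need not be a $C^1$-diffeomorphism with $|f'|$ bounded away from zero (e.g.\ $f(u)=u^3$), but this is harmless, since transporting the weak convergence only uses the upper bound of $|f'|$ on $[\udl{x}_t,\ovl{x}_t]$, i.e.\ boundedness (not invertibility) of the multiplication and substitution operators.
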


In Proposition \ref{Sfx} one considers the same sequence of partitions $(\pi_n)_n$ for $x$ and for $f(x)$. This seems to be problematic, since ideally we would like  Proposition \ref{Sfx} to hold also for Lebesgue partitions, and clearly if $P$ is a partition of $\bR$ then $\pi_P(f(x))$ differs from $\pi_P(x)$. However Proposition \ref{Sfx} does apply to suitably chosen Lebesgue partitions since $\pi_{f(P)}(f(x))=\pi_P(x)$ if $f$ is strictly increasing.

To prove Proposition \ref{Sfx} and better understand the behavior of $L^{\pi}$, let  $t_J:=\max\{t_j\in \pi: t_j\leq t\}$  and
\begin{align}
\label{piUD}
\pi^U_t(u):=\{t_j \in \pi  : x_{{t_j}}\leq u< x_{t_{j+1}} , \, t_{j+1}\leq t\} \, , \\ 
\nonumber
\quad \pi^D_t(u):=\{t_j \in \pi : x_{t_{j+1}}\leq u< x_{t_{j}}   , \, t_{j+1}\leq t \} ,
\end{align} 
and notice that, since all the terms in \eqref{Lpi} with $ t<t_{j}$ are equal to zero, 
\be 
\textstyle \label{LpiUpDo}
L^{\pi}_t(u)/2=\sum_{t_j\in \pi^U_t(u)} (x_{t_{j+1}} -u) + \sum_{t_j\in \pi^D_t(u)} (u - x_{t_{j+1}} )  +  \1_{ [x_{t_J},x_t )}(u) |x_{t} -u| . 
\ee

\begin{proof}[Proof of Proposition \ref{Sfx}]
Since adding $t$ to any partition $\pi$ does not change the value of $L^{\pi}_t(u)$ and insures that the last term in  \eqref{LpiUpDo} is zero, we assume without loss of generality that our partitions contain $t$.
If  $f$ is strictly increasing  and $a\leq b$ then 
\[ x_{a}\leq u<x_{b}  \text{ iff } f(x_{a})\leq f(u)<f(x_{b}) ,\]
and thus \eqref{LpiUpDo} implies that $L^{f(x),\pi}_t(f(u))/2$ equals
\be 
\textstyle \label{Lpif}
\sum_{t_j\in \pi^U_t(u)} (f(x_{t_{j+1}}) -f(u)) + \sum_{t_j\in \pi^D_t(u)} (f(u) - f(x_{t_{j+1}}) )  .
\ee
If $t_j\in \pi^U_t(u)$, since $f\in C^1$ there exists $z_j(u)\in (u,x_{t_{j+1}})$  such that  
\[ f(x_{t_{j+1}}) -f(u)=f'(z_j(u))(x_{t_{j+1}} -u) , \]
so we can write the first sum in \eqref{Lpif} as
\begin{align}
\label{RemLtFU}
\sum_{t_j\in \pi^U_t(u)} (f'(z_j(u))-f'(u))  (x_{t_{j+1}} -u) +
f'(u) \sum_{t_j\in \pi^U_t(u)}  (x_{t_{j+1}} -u) .
\end{align} 
Treating analogously the second sum in \eqref{Lpif} we get that
\begin{align}
\label{diffLf}
L^{f(x),\pi}_t(f(u))- f'(u)L^{x,\pi}_t(u) 
\end{align} 
is bounded by 
\begin{align}
\label{RemLtFAll}
2 \sum_{t_j\in \pi^U_t(u) \cup \pi^D_t(u)} |f'(z_j(u))-f'(u)|  |x_{t_{j+1}} -u| .
\end{align} 
 Now define 
 \begin{align}
\label{}
 R_t(g,\pi):=\max\{|g(c) - g(d)|: c,d\in [\udl{x}_t,\ovl{x}_t],  |c-d| \leq O_t(x,\pi) \}  .
\end{align} 
Clearly \eqref{RemLtFAll} with $\pi=\pi_n$ is bounded by 
$ R_t(f',\pi_n)L^{x,\pi_n}_t(u)$, so since $L^{x,\pi_n}_t$ converges to $L^{x}_t$ and $ R_t(f',\pi_n)\to 0$  we get that \eqref{diffLf} with $\pi=\pi_n$ converges  to $0$, proving the thesis.

If $f$ is strictly decreasing then the argument is the same save for the sign change, which comes from the fact that upcrossings are now transformed in downcrossings and conversely, so $x_{t_{j+1}} -u$ needs to be replaced by 
$u-x_{t_{j+1}}$.
\end{proof}

\begin{proposition}\label{TimeChange} 
Let $\tau:[0,\8)\to [0,\8)$ be an increasing c{\`a}dl{\`a}g function such that $x_{\tau}$ is continuous and $\tau(0)=0$.
Given $\Pi=(\pi_n)_n$, let $\tau(\Pi):=(\tau_{\pi_n})_n$ where, given $\pi=(t_j)_j$, $\tau_\pi$ denotes the partition $(\tau_{t_j})_j$.
If $O_{\tau_t}(x,\tau_{\pi_n})\to 0$ for all $t\in [0,\infty)$ and $x\in\cL_p^{\tau(\Pi)}$  then 
 $O_t(x\circ\tau,\pi_n)\to 0$ for all $t\in [0,\infty)$, $x_\tau\in\cL_p^{\Pi}$ and 
 the pathwise local times are related by
\[ L^{x\circ\tau,\Pi}_t(u)=L^{x,\tau(\Pi)}_{\tau_t}(u). \]
Moreover if $\tau$ is bijective\footnote{I.e. if $\tau$ is strictly increasing, continuous and such that $\tau(0)=0, \lim_{t\to \8}\tau(t)=\8$.} then $x\circ\tau$ is continuous, $O_{\tau^{-1}_t}(x\circ \tau,\tau^{-1}_{\pi})=O_{t}(x,\pi)$ for any partition $\pi$, and if $P$ is a partition of $\bR$ then  the Lebesgue partitions of $x\circ \tau$ and $x$ satisfy $\pi_P(x\circ\tau)=\tau^{-1}_{\pi_P(x)}$.

\end{proposition}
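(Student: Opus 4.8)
The plan is to reduce everything about $x\circ\tau$ sampled along $\Pi$ to the corresponding statements about $x$ sampled along $\tau(\Pi)$, using the simple but crucial observation that the Riemann-type sums defining the discrete quadratic variation, the F\"ollmer integral and the discrete local time $L^{\pi}$ depend on the path and the partition only through the \emph{values} $(x_{t_j})_j$ at the partition points, never through the times $t_j$ themselves. First I would record the elementary identity
\[
(x\circ\tau)_{t_j\wedge t}=x_{\tau_{t_j}\wedge \tau_t}
\]
valid for $\tau$ increasing c\`adl\`ag with $\tau(0)=0$; since $\tau_{t_j}$ are exactly the points of $\tau_{\pi_n}$ this shows immediately that the partial sums
\[
\sum_{t_j\in\pi_n} g\big((x\circ\tau)_{t_j}\big)\big((x\circ\tau)_{t_{j+1}\wedge t}-(x\circ\tau)_{t_j\wedge t}\big)
=\sum_{s_j\in\tau_{\pi_n}} g(x_{s_j})\big(x_{s_{j+1}\wedge\tau_t}-x_{s_j\wedge\tau_t}\big),
\]
and analogously $O_t(x\circ\tau,\pi_n)=O_{\tau_t}(x,\tau_{\pi_n})$, so the hypothesis $O_{\tau_t}(x,\tau_{\pi_n})\to0$ gives $O_t(x\circ\tau,\pi_n)\to0$. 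The same substitution applied to \eqref{Lpi} yields $L^{x\circ\tau,\pi_n}_t(u)=L^{x,\tau_{\pi_n}}_{\tau_t}(u)$ at the discrete level, and applied to \eqref{MeasMu}/$\langle\cdot\rangle^{\pi_n}$ gives $\langle x\circ\tau\rangle^{\pi_n}_t=\langle x\rangle^{\tau_{\pi_n}}_{\tau_t}$.

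Next I would transfer the convergence. Since $x\in\cL_p^{\tau(\Pi)}$, by Theorem \ref{MainEquiv} $x\in\cQ^{\tau(\Pi)}$, so $\langle x\rangle^{\tau_{\pi_n}}_s\to\langle x\rangle_s$ pointwise to a continuous increasing function; composing with the fixed map $s\mapsto\tau_t$ (for each fixed $t$) gives $\langle x\circ\tau\rangle^{\pi_n}_t\to\langle x\rangle_{\tau_t}=:\langle x\circ\tau\rangle_t$, which is still continuous in $t$ because $x_\tau$ is assumed continuous (one must check the limit has no atoms; continuity of $t\mapsto\langle x\rangle_{\tau_t}$ follows since $\langle x\rangle$ is continuous and any jump of $\tau$ is an interval of constancy for $x$ hence for $\langle x\rangle$ on the relevant range — here I would invoke that $x_\tau$ continuous forces $\langle x\rangle$ to be constant across the jumps of $\tau$, via the occupation formula \eqref{od} or directly). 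Hence $x\circ\tau\in\cQ^\Pi$. For the local time, fix $t$ and $h\in L^q$: by condition (4) of Theorem \ref{MainEquiv} applied to $x$ along $\tau(\Pi)$,
\[
\int_\bR L^{x,\tau_{\pi_n}}_{s}(u)h(u)\,du\ \longrightarrow\ \int_\bR L^{x,\tau(\Pi)}_{s}(u)h(u)\,du
\quad\text{uniformly in }s\text{ on compacts},
\]
so evaluating at $s=\tau_t$ gives $\int_\bR L^{x\circ\tau,\pi_n}_t(u)h(u)\,du\to\int_\bR L^{x,\tau(\Pi)}_{\tau_t}(u)h(u)\,du$; uniform convergence in $t$ on compacts follows because $t\mapsto\tau_t$ maps compacts to compacts. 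Thus condition (4) of Theorem \ref{MainEquiv} holds for $x\circ\tau$ along $\Pi$, so $x\circ\tau\in\cL_p^\Pi$ with $L^{x\circ\tau,\Pi}_t(u)=L^{x,\tau(\Pi)}_{\tau_t}(u)$, which is the claimed identity.

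For the bijective case: if $\tau$ is a homeomorphism of $[0,\infty)$ with $\tau(0)=0$, then $x\circ\tau$ is continuous as a composition of continuous maps; the identity $O_{\tau^{-1}_t}(x\circ\tau,\tau^{-1}_\pi)=O_t(x,\pi)$ is immediate from $(x\circ\tau)_{\tau^{-1}_{s}}=x_s$ and the fact that $\tau^{-1}$ is a strictly increasing bijection, so the index set $[\tau_k,\tau_{k+1})\cap[0,\tau^{-1}_t]$ in \eqref{Osc} is carried bijectively onto $[s_k,s_{k+1})\cap[0,t]$. Finally, $\pi_P(x\circ\tau)=\tau^{-1}_{\pi_P(x)}$ is proved by induction on the index $k$ in the recursion \eqref{LebPart}: since $\tau^{-1}$ is strictly increasing and continuous, $\inf\{t>\tau^{-1}_{t_k}:(x\circ\tau)_t\in P,\ (x\circ\tau)_t\neq(x\circ\tau)_{\tau^{-1}_{t_k}}\}=\tau^{-1}\big(\inf\{s>t_k:x_s\in P,\ x_s\neq x_{t_k}\}\big)$, using that $(x\circ\tau)_{\tau^{-1}_{t_k}}=x_{t_k}$ and that $\tau^{-1}$ commutes with $\inf$ of sets bounded below.

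The main obstacle is the atom-freeness / continuity issue in the middle paragraph: one must argue that although $\tau$ may have jumps, the limiting quadratic variation $t\mapsto\langle x\rangle_{\tau_t}$ remains continuous, i.e.\ that $x_\tau$ being continuous precludes $\langle x\rangle$ from charging the ranges skipped over by jumps of $\tau$. Everything else is bookkeeping with the substitution $t_j\leftrightarrow\tau_{t_j}$, but this point genuinely uses the hypothesis that $x_\tau$ (and not merely $x$) is continuous and should be spelled out carefully, most cleanly via the occupation time formula \eqref{od}.
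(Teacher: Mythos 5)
Your proof follows essentially the same route as the paper's (very short) argument: everything rests on the substitution identity $(x\circ\tau)_{t_j\wedge t}=x_{\tau_{t_j}\wedge\tau_t}$ (valid since $\tau$ is increasing), which transfers all the discrete quantities, so that $L^{x\circ\tau,\pi}_t(u)=L^{x,\tau_\pi}_{\tau_t}(u)$ holds exactly for every partition, and the conclusion is then read off from Theorem \ref{MainEquiv}; the bijective case is likewise handled as in the paper. One local correction: your claim $O_t(x\circ\tau,\pi_n)=O_{\tau_t}(x,\tau_{\pi_n})$ is false in general, because $\{\tau_s:s\in[t_k,t_{k+1})\cap[0,t]\}=[\tau_{t_k},\tau_{t_{k+1}})\cap[0,\tau_t]\cap\tau([0,\infty))$, so the left-hand oscillation only sees values of $x$ on the range of $\tau$; in general one only gets $O_t(x\circ\tau,\pi_n)\le O_{\tau_t}(x,\tau_{\pi_n})$, with equality when $\tau$ is a bijection. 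This is harmless, since the inequality is the direction you actually use.

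More substantively, the resolution you propose for your ``main obstacle'' rests on the wrong mechanism: continuity of $x\circ\tau$ at a jump time $t_0$ of $\tau$ only forces $x_{\tau_{t_0-}}=x_{\tau_{t_0}}$, and by itself does not prevent $x$ from oscillating (and $\langle x\rangle$ from growing) inside the skipped interval $(\tau_{t_0-},\tau_{t_0})$; the occupation formula \eqref{od} does not give this either. What does force $\langle x\rangle$ to be constant across the jump is the hypothesis $O_{\tau_t}(x,\tau_{\pi_n})\to0$: no point of $\tau_{\pi_n}$ lies in the open skipped interval for any $n$, so the oscillation of $x$ over that interval is a lower bound for $O_{\tau_t}(x,\tau_{\pi_n})$ for every $n$ and must therefore vanish, i.e. $x$ is constant there. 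Alternatively, you can delete the quadratic-variation paragraph altogether: once you have checked the standing hypotheses of Theorem \ref{MainEquiv} for $x\circ\tau$ along $\Pi$ (continuity and $O_t(x\circ\tau,\pi_n)\to0$) and verified condition (4) as you do, the equivalence already yields $x\circ\tau\in\cQ^{\Pi}$ and the continuity in $t$, so no separate atom-freeness argument is needed. With these adjustments your proof is correct and coincides with the paper's.
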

\begin{proof} Even  if $\tau$ is not strictly increasing, the identity 
\[ \{\tau_s: s\in [t_i,t_{i+1})\cap[0,t] \}= [\tau_{t_i},\tau_{t_{i+1}})\cap[0,\tau_t]\cap \tau([0,\8)) ,   \]
holds, and it trivially implies that  $O_t(x\circ\tau,\pi_n)\leq O_{\tau_t}(x,\tau_{\pi_n})$, with equality if $\tau$ is a bijection.
Trivially $ L^{x\circ\tau,\pi}_t(u)=L^{x,\tau_\pi}_{\tau_t}(u) $
holds for every partition $\pi$, and everything else follows easily.
\end{proof}

Note that Propositions \ref{Sfx} and \ref{TimeChange} hold (with the same proof) with other definitions of existence of the pathwise local time; for example if one  replaced the weak topology of $L^p$ for $p\in[1,\8)$ (resp. the weak$^*$ topology on $L^\8$) with the strong one in item \ref{EqContMain} of Theorem \ref{MainEquiv}, or if one considered Definition 2.5 in \cite{PePr14}.

\section{Extension to convex functions }\label{sec-ext2conv}
 The choice of how to define the existence of the pathwise local time is intrinsically linked to the class of functions for which one is able to establish the pathwise Tanaka-Meyer formula \eqref{itoLT}.
To establish it for all convex functions one needs to restrict significantly the set of paths for which the local time exists; nonetheless, in \cite{PePr14} it is shown that this approach works for general enough paths (namely, for the `typical path' in the sense of Vovk \cite{vv12}).

It is natural to ask if the above can be extended even further, to all continuous functions. As already remarked  in \cite{fol81}, the next proposition shows that, if one wants to consider a generic path of a local martingale, the answer is no -- to define stochastic integrals in a pathwise manner for more general integrands one has to consider partitions which depend both on the integrator and the integrand as in \cite[Theorem 7.14]{Bi81} and \cite{Ka95}. 
 \begin{proposition} (Stricker \cite{str81}) Let $x\in C[0,T]$. If for every sequence of partitions $(\pi_n)_n$ with $O_T(x,\pi_n)\to 0$ and every  bounded continuous function $f$ on $\bR$ the Riemann sums $\sum_{t_i\in \pi_n}f(x_{t_i})(x_{t_i+1}-x_{t_i})$ converge, then $x$ has finite variation.
\end{proposition}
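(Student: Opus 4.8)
The plan is to prove the contrapositive: assuming $x$ has \emph{infinite} variation, I will construct a sequence of partitions $(\pi_n)_n$ with $O_T(x,\pi_n)\to 0$ and a bounded continuous $f$ for which the Riemann sums $\sum_{t_i\in\pi_n}f(x_{t_i})(x_{t_{i+1}}-x_{t_i})$ fail to converge. The mechanism is the classical one behind the non-existence of a pathwise integral against a function of unbounded variation: by choosing the evaluation points $t_i$ cleverly one can make the Riemann sums oscillate. Specifically, because $x$ has infinite variation on $[0,T]$, for every $N$ and every $\delta>0$ one can find a finite partition of mesh $<\delta$ along which $\sum |x_{t_{i+1}}-x_{t_i}|>N$; the idea is to turn this total-variation growth into oscillation of the (signed) Riemann sums.

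The key steps, in order, would be: (i) fix $f$ to be a bounded continuous function that is strictly monotone on the range of $x$, e.g.\ $f=\arctan$, so that $\mathrm{sgn}(f(b)-f(a))=\mathrm{sgn}(b-a)$ and, more importantly, $f(x_{t_i})(x_{t_{i+1}}-x_{t_i})$ can be compared with $f(x_{t_i})$ times a single increment; (ii) along a sequence of finite partitions $\sigma_n$ of $[0,T]$ with mesh tending to $0$, use the infinite variation of $x$ to guarantee $\sum_{\sigma_n}|x_{t_{i+1}}-x_{t_i}|\to\8$; (iii) build $\pi_n$ from $\sigma_n$ by inserting, inside each subinterval $[t_i,t_{i+1}]$ of $\sigma_n$, one extra point chosen so that on consecutive subintervals the contributions alternate in a controlled way — concretely, on intervals where $f(x)$ is roughly constant and the increment is sizeable, split the increment into an ``up then down'' (or ``down then up'') pair of sub-increments so that the term $f(x_{t_i})(x_{t_{i+1}}-x_{t_i})$ is replaced by something whose partial sums move by a fixed amount; (iv) interleave two such constructions — one producing Riemann sums that drift up, one that drifts down — to obtain a single sequence of partitions (still with vanishing mesh, since we only ever \emph{insert} points) along which the Riemann sums have two distinct subsequential limits, contradicting convergence. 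Alternatively, and perhaps more cleanly, one invokes a uniform-boundedness / closed-graph argument: the map sending $(\pi_n)_n$-Riemann-sum data to its limit would be a bounded linear functional on $C(\bR)$ (equivalently a finite signed measure), and evaluating it against a suitable sequence of $f$'s forces $x$ to have bounded variation; but the hands-on construction above is more self-contained.

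I expect the main obstacle to be step (iii): making the insertion of extra points genuinely force oscillation rather than mere non-monotone behaviour, while simultaneously keeping the mesh $O_T(x,\pi_n)$ (in the oscillation sense of \eqref{Osc}, which also controls sub-oscillations) going to zero. The subtlety is that $x$'s infinite variation is witnessed by \emph{small} increments whose \emph{number} is large; to convert a sum $\sum|x_{t_{i+1}}-x_{t_i}|\to\8$ of vanishing terms into a Riemann-sum displacement of order $1$, one must group the subintervals according to the (nearly constant) value of $f(x_{t_i})$ on each group and exploit that on each group $\sum_i f(x_{t_i})(x_{t_{i+1}}-x_{t_i})\approx f(\text{value})\cdot(\text{net increment})$, whereas by re-choosing endpoints within the group one can instead realise $\approx f(\text{value})\cdot(\text{one-sided variation})$, which is large. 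Handling the bookkeeping so that the two competing partition sequences can be merged into one — with the right subsequences realising the ``up'' and ``down'' behaviour — is where the care is needed; everything else (boundedness of $f$, vanishing of mesh under insertion, telescoping estimates) is routine.
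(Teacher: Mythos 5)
The paper itself gives no proof of this proposition (it is quoted from Stricker \cite{str81}), so I can only judge your argument on its own terms, and it has a genuine gap, already at step (i). Fixing a smooth (indeed Lipschitz) monotone $f$ such as $\arctan$ cannot work for every path of infinite variation. Take $x$ to be an $\alpha$-H\"older, nowhere differentiable function with $\alpha\in(1/2,1)$ (a Weierstrass function): it has infinite variation (a function of bounded variation is differentiable a.e.), but finite $p$-variation with $p=1/\alpha<2$. For such an $x$ and \emph{any} Lipschitz $f$, the Young--L\'oeve estimate $\bigl|\int_s^t f(x_r)\,dx_r-f(x_s)(x_t-x_s)\bigr|\le C\,V_p(x;[s,t])\,V_p(f\circ x;[s,t])$, superadditivity of $V_p^p$ over the blocks, and the fact that $\max_i V_p(x;[t_i,t_{i+1}])\to 0$ whenever $O_T(x,\pi_n)\to 0$ (the $p$-variation of a continuous path is continuous in the endpoints) show that the Riemann sums converge to the Young integral along \emph{every} admissible sequence of partitions, no matter how cleverly points are inserted. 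So once $f=\arctan$ is fixed, steps (ii)--(iv) cannot produce divergence; the witnessing $f$ must depend on the path and must in general be non-Lipschitz, which is why the known arguments obtain it non-constructively (Banach--Steinhaus/Baire category) rather than by exhibiting it.

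The same example exposes the gap in step (iii) and in your functional-analytic fallback. The $n$-th Riemann sum is integration of $f$ against the discrete signed measure $\mu_{\pi_n}=\sum_i(x_{t_{i+1}}-x_{t_i})\,\delta_{x_{t_i}}$, and the norm of this functional on $C(K)$, $K\supseteq x([0,T])$, is $|\mu_{\pi_n}|(\bR)$, which can be far smaller than $\sum_i|x_{t_{i+1}}-x_{t_i}|$: up-moves and down-moves based at the same spatial value cancel, and no continuous function of the space variable can separate them. This is also why ``realising the one-sided variation by re-choosing endpoints within a group where $f(x)$ is nearly constant'' is impossible: a partition must retain all consecutive increments, so over such a group the sum sees only $f(\text{value})$ times the net increment, up to an error bounded by the oscillation of $f\circ x$ on the group times the variation of $x$ along the refinement inside it --- and the first factor is small precisely by your choice of grouping; likewise inserting a single point into a block changes its contribution by $(f(x_s)-f(x_{t_i}))(x_{t_{i+1}}-x_s)$, of size at most $\omega_f(O_T(x,\pi_n))\cdot O_T(x,\pi_n)$, so an order-one displacement needs a quantitative, sign-coherent selection over very many blocks, which you do not provide. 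Consequently the uniform-boundedness route only reduces the proposition to the real content of Stricker's theorem: that infinite variation forces $\sup_n|\mu_{\pi_n}|(\bR)=\infty$ along some sequence with $O_T(x,\pi_n)\to 0$ (or some equivalent mechanism), and your proposal contains no argument for that step. The interleaving idea in (iv) is fine as far as it goes, but it is the easy part.
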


In what follows we take a different route from \cite{PePr14} to further extend F\"ollmer's integral and Tanaka-Meyer formula beyond $f\in C^2$. We consider $f$ which is a difference of two convex functions and write $f'_-$ for its left-continuous derivative and $f''$ for the second distributional derivative of $f$.
In a way somewhat reminiscent of \cite[Proposition 1.2]{ber87}, we define $ \int_0^t f'_{-}(x_s)dx_s$ as the limit of $\int_0^t f'_{n}(x_s)dx_s$, where $f_n$ are some special $C^2$ functions converging to $f$ and  $\int_0^{t} f'_n(x_s) d  x_s$ is defined in Theorem \ref{fol} as a limit of Riemann sums.  

We now fix $\Pi=(\pi_n)_n$ such that $O_t(x,\pi_n)\to 0$ as $n\to \8$ for all $t\in [0,\infty)$, and we consider a function $g$ which is $C^2$, positive and  with compact support in $[0,\8)$, and such that $\int_{\bR} g(x)dx =1$. We will then approximate the target fuction $f$ with $f_n:=g_n*f$, where  $*$ denotes the convolution between a function and a measure (or a function), $g_n$ is the mollifier $g_n(u):=ng(nu)$. 
Recall that, if $x\in \cL_1$,  $L_t(\cdot)$ is seen an element of $L^1(du)$; the following theorem assumes that there exists a modification of $L_t$ which is c{\`a}dl{\`a}g in $u$, i.e., a function $\tilde{L}_t(u)$ c{\`a}dl{\`a}g in $u$ and such that, for each $t$, the set $\{u:\tilde{L}_t(u)\neq L_t(u)\}$ has zero Lebesgue measure; this is not an unreasonable assumption, as it is satisfied by a.e. path of a semimartingale (indeed the local time of a continuous semimartingale has a modification which is jointly c{\`a}dl{\`a}g in $u$ and continuous in $t$).

 \begin{theorem}
\label{convLT}
  Assume that $x\in \cL_1$ and there exists a modification $L_t(u)$ of the pathwise local time  which is c{\`a}dl{\`a}g in  $u$  for all $t$.  If $f$ is convex then $f_n$ is $C^2$ and for all  $t\in [0,\infty)$ the  F\"ollmer integral    $\int_0^{t} f'_n(x_s) d  x_s$ converges  to a finite limit, denoted by $\int_0^t f'_{-}(x_s) d  x_s$,  which is independent of the choice of $g$ and satisfies 
\be 
 f(x_t)-f(x_0)=\int_0^t f'_{-}(x_s)dx_s+\frac{1}{2}\int_{\bR} L_t(u)f''(du)  
 \label{itoLTconv}. \ee
Moreover if $L_t(u)$ is jointly c{\`a}dl{\`a}g in  $u$ and continuous in $t$ then the convergence is  uniform on compacts and    $t\mapsto \int_0^t f'_{-}(x_s) d  x_s$ is continuous.
\end{theorem}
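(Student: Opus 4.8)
The plan is to approximate the convex $f$ by the smooth functions $f_n=g_n*f$, apply the pathwise Tanaka--Meyer formula \eqref{itoLT} — available since $x\in\cL_1$ — to each $f_n$, and pass to the limit, reading off both the definition of $\int_0^t f'_-(x_s)\,dx_s$ and \eqref{itoLTconv} from the limiting identity. First I would record the properties of $f_n$: since $g_n\in C^2$ has compact support, $f_n$ is $C^2$ with $f_n'=g_n*f'_-$ and $f_n''=g_n*f''$, so $f_n''\ge 0$ (hence $f_n$ convex), $f_n''$ is continuous, and thus $f_n\in C^2\cap W^{2,\infty}_{loc}$ with $f_n'\in C^1$. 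Continuity of $f$ gives $f_n\to f$ uniformly on compacts, and since $g$ is supported in $[0,\infty)$ and $f'_-$ is left-continuous, $f_n'(u)=\int_0^\infty f'_-(u-w)g_n(w)\,dw\to f'_-(u)$ for every $u\in\bR$ (the mass of $g_n$ concentrating at $0^+$ selects the left limit of $f'_-$).

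Since $x\in\cL_1$, Theorem \ref{MainEquiv} in the case $p=1$, $q=\infty$ — with $W^{2,q}$ replaced by $W^{2,q}_{loc}$, as noted after its statement — applies to each $f_n$, and $\int_0^t f_n'(x_s)\,dx_s$ is the F\"ollmer integral of Theorem \ref{fol} because $f_n'\in C^1$. Writing $G_n(t):=\int_{\bR}L_t(u)f_n''(u)\,du$, formula \eqref{itoLT} becomes $\int_0^t f_n'(x_s)\,dx_s=f_n(x_t)-f_n(x_0)-\tfrac12 G_n(t)$, while the occupation formula \eqref{od} gives $G_n(t)=\int_0^t f_n''(x_s)\,d\langle x\rangle_s$, so each $G_n$ is continuous, non-negative and non-decreasing. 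Let $n\to\infty$: the right-hand side has $f_n(x_t)-f_n(x_0)\to f(x_t)-f(x_0)$; and by Fubini $G_n(t)=\int_{\bR}\big(\int_0^\infty L_t(u+w)g_n(w)\,dw\big)f''(du)$, so, since $L_t(\cdot)$ is c\`adl\`ag — hence right-continuous, bounded, and supported on $[\udl{x}_t,\ovl{x}_t]$ — and $f''$ is finite on compacts, dominated convergence yields $G_n(t)\to G(t):=\int_{\bR}L_t(u)\,f''(du)<\infty$. Hence $\int_0^t f_n'(x_s)\,dx_s\to f(x_t)-f(x_0)-\tfrac12\int_{\bR}L_t(u)\,f''(du)$, and we \emph{define} this limit to be $\int_0^t f'_-(x_s)\,dx_s$; it is finite, manifestly independent of $g$, and \eqref{itoLTconv} holds by construction.

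For the last sentence of the theorem, assume $L$ is jointly c\`adl\`ag in $u$ and continuous in $t$. Then $f_n(x_t)\to f(x_t)$ uniformly in $t$ on $[0,T]$, because $x([0,T])$ is compact and $f_n\to f$ uniformly on compacts. Next, $G$ is continuous in $t$: if $t_k\to t\le T$, then $L_{t_k}(u)\to L_t(u)$ for every $u$ by continuity in $t$, and — comparing with the increasing-in-$t$ modification $l$ of the local time (the remark following Theorem \ref{MainEquiv}, cf.\ \eqref{od2}) and using that two c\`adl\`ag functions of $u$ which agree a.e.\ agree everywhere — one has $L_{t_k}(u)\le L_T(u)$ for all $u$, with $L_T$ bounded and hence $f''$-integrable on the relevant compact, so dominated convergence gives $G(t_k)\to G(t)$. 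As the $G_n$ are continuous non-decreasing and converge pointwise to the continuous non-decreasing $G$, Scholium \ref{Polya} (Polya) upgrades $G_n\to G$ to local uniformity. Consequently $\int_0^\cdot f_n'(x_s)\,dx_s\to\int_0^\cdot f'_-(x_s)\,dx_s$ uniformly on compacts, and the limit — a locally uniform limit of the continuous maps $t\mapsto\int_0^t f_n'(x_s)\,dx_s$ (Theorem \ref{fol}) — is continuous.

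I expect the only non-routine point to be this uniform-in-$t$ claim. The chosen route sidesteps any direct oscillation estimate on $L$ by recognising $G_n$ as the non-decreasing functions $\int_0^\cdot f_n''(x_s)\,d\langle x\rangle_s$ and appealing to Polya's Scholium, so that the whole matter reduces to continuity of $G(t)=\int_{\bR}L_t(u)\,f''(du)$ — which is precisely where the hypothesis ``jointly c\`adl\`ag in $u$ and continuous in $t$'', together with monotonicity in $t$ of an a.e.-equal modification of the local time, is used.
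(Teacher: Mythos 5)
Your proof is correct, and its core coincides with the paper's: mollify $f$, apply \eqref{itoLT} (equivalently \eqref{ito} together with \eqref{od}) to $f_n$, use Fubini to rewrite $\int_{\bR} L_t\,df_n''$ as an integral of a right-sided average of $L_t$ against $f''$, and conclude by right-continuity of $L_t(\cdot)$, boundedness, compact support and dominated convergence; the limiting identity \eqref{itoLTconv} then serves simultaneously as the definition of $\int_0^t f'_-(x_s)\,dx_s$, as proof of finiteness, and as proof of independence from $g$ --- exactly as in the paper. Where you genuinely diverge is the uniform-on-compacts statement: the paper argues that under joint regularity the modulus of right-continuity in \eqref{LTn} can be chosen uniformly in $t$ on compacts, so that the mollified local times converge to $L_t$ uniformly in $t$; you instead identify $G_n(t)=\int_0^t f_n''(x_s)\,d\langle x\rangle_s$ via \eqref{od}, so that the $G_n$ are continuous and non-decreasing, prove that $G(t)=\int_{\bR}L_t(u)\,f''(du)$ is continuous (continuity of $L$ in $t$ plus the domination $L_{t_k}\le L_T$, obtained from the a.e.\ monotonicity of $t\mapsto L_t$ upgraded to an everywhere inequality by right-continuity in $u$), and invoke Scholium \ref{Polya}. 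This Dini/Polya route --- the same device the paper uses in Propositions \ref{Cont} and \ref{ContConv} --- sidesteps the uniform-modulus claim, which the paper states rather tersely, at the cost of the small extra step establishing monotonicity and continuity of $G$ (monotonicity of $G$ is in any case automatic as a pointwise limit of the non-decreasing $G_n$). One wording quibble: what you actually need is the inequality version of your c\`adl\`ag principle, namely that $L_{t_k}\le L_T$ Lebesgue-a.e.\ together with right-continuity of both functions in $u$ forces the inequality everywhere; the proof is the same density-plus-right-limits argument you have in mind, so this is a matter of phrasing, not a gap.
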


Theorem \ref{convLT} allows to define \footnote{As pointed out to us by F\"ollmer \cite{FolPriv}   another possible definition  of $\int_0^t f_{-}'(x_s) d x_s$ for non-smooth convex $f$ is as the 
 limit of $\int_0^t f'_k(x_s) d x_s$  for \emph{any} $(f_k)_k \subseteq C^2$ such that $f''_k(x)dx$  (considered as a measure) converges weakly to $f''(x)dx$. It follows from \eqref{itoLT} that  this definition makes sense (i.e. the limit exists and is independent of the approximating sequence $(f_k)_k$), and agrees with ours, if $L_t(u)$ has a modification  which is continuous in  $u$).}
  $\int_0^t g_{-}(x_s) d x_s$ for any function $g$ of finite variation on compacts, since then $f(u):=\int_{x_0}^u g(y) dy$ is the difference of convex functions.
 We now study the continuity properties of $g\mapsto \int_0^{\cdot} g_{-}(x_s) d x_s$. 
 \begin{proposition}
\label{ContConv}
Let $x\in \cL_1$ and assume that there exists a modification $L_t(u)$ of the pathwise local time  which is continuous in  $u$. If   $g_n,$ and $ g$ are functions of finite variation on compacts, $g_n(x_0)\to g(x_0)$ and  $g'_n\to g' $ weakly (seen as measures), then $\int_0^{t} g_n(x_s) dx_s \to \int_0^{t} g(x_s) dx_s$ for all $t\in [0,\infty)$. Moreover, if $|g'_n|\to |g'|$ weakly then the convergence is uniform on compacts.
\end{proposition}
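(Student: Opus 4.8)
The plan is to mimic the proof of Proposition \ref{Cont}, using the convex Tanaka--Meyer formula \eqref{itoLTconv} in place of the Sobolev one \eqref{itoLT}. First I would set $f(u):=\int_{x_0}^u g(y)\,dy$ and, analogously, $f_n$ from $g_n$; since $g_n$ and $g$ have finite variation on compacts these are differences of convex functions, with $f_n''=g_n'$ and $f''=g'$ (as measures), left derivatives $(f_n)'_-=g_{n,-}$ and $f'_-=g_-$, and $f_n(x_0)=0=f(x_0)$. Applying Theorem \ref{convLT} (extended to differences of convex functions, as in the remark following it) to $f_n$ and to $f$ gives, for every $t$,
\[ \int_0^t g_{n,-}(x_s)\,dx_s = f_n(x_t)-\tfrac12\int_{\bR}L_t(u)\,g_n'(du), \qquad \int_0^t g_{-}(x_s)\,dx_s = f(x_t)-\tfrac12\int_{\bR}L_t(u)\,g'(du), \]
so everything reduces to proving, for each $t$, that (i) $f_n(x_t)\to f(x_t)$ and (ii) $\int_{\bR}L_t(u)\,g_n'(du)\to\int_{\bR}L_t(u)\,g'(du)$.

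Point (ii) is the easy one, and is where the assumed continuity of $L$ in $u$ enters: since $L_t$ is continuous and vanishes off the compact $[\udl{x}_t,\ovl{x}_t]$, it lies in $C_c(\bR)$, so the weak convergence $g_n'\to g'$ yields (ii) at once. Point (i) is the substance. Here I cannot copy Proposition \ref{Cont}, where one simply tests $g_n$ against $\1_{[x_0\wedge u,x_0\vee u]}\in L^p$: indicators are not continuous, so testing the measures $g_n'$ against them is illegitimate. Instead I would argue that weak convergence of the signed measures $g_n'\to g'$ forces, by Banach--Steinhaus, $\sup_n|g_n'|(K)<\infty$ on every compact $K$; together with $g_n(x_0)\to g(x_0)$ this makes $\{g_n\}$ locally uniformly bounded and of locally uniformly bounded variation. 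Helly's selection theorem then lets one extract from any subsequence a further one along which $g_n$ converges pointwise to a function $\tilde g$ of bounded variation with $\tilde g(x_0)=\lim_n g_n(x_0)=g(x_0)$; by the Helly--Bray theorem $g_n'\to\tilde g'$ weakly along that subsequence, hence $\tilde g'=g'$ and therefore $\tilde g=g$. By uniqueness of the limit, $g_n\to g$ in measure on compacts, hence, by local uniform boundedness, in $L^1_{loc}$, so $f_n(u)=\int_{x_0}^u g_n\to\int_{x_0}^u g=f(u)$ for all $u$; this is (i), and proves the first assertion. Since $\{g_n\}$ is locally uniformly bounded, $\{f_n\}$ is locally equi-Lipschitz, so by Arzel\`a--Ascoli $f_n\to f$ uniformly on compacts, whence $f_n(x_t)\to f(x_t)$ uniformly in $t$ on compacts.

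For the ``moreover'' statement I would proceed as in Proposition \ref{Cont}: assuming in addition $|g_n'|\to|g'|$ weakly, write $g_n'=p_n-m_n$ with $p_n:=\max(g_n',0)=(g_n'+|g_n'|)/2$ and $m_n:=\max(-g_n',0)$, so $p_n\to p:=\max(g',0)$ and $m_n\to m:=\max(-g',0)$ weakly, and split
\[ \int_0^t g_{n,-}(x_s)\,dx_s = f_n(x_t)-\tfrac12\int_{\bR}L_t\,dp_n+\tfrac12\int_{\bR}L_t\,dm_n. \]
Each of $t\mapsto\int L_t\,dp_n$ and $t\mapsto\int L_t\,dm_n$ is nondecreasing (using the modification of $L$ increasing in $t$, which by uniqueness is the given continuous-in-$u$ one), converges pointwise in $t$ by (ii), and has continuous limit $t\mapsto\int L_t\,dp$, resp.\ $\int L_t\,dm$ --- this last point needing that $L$ is also continuous in $t$, i.e.\ that one is in the regime of the second half of Theorem \ref{convLT} in which the F\"ollmer integrals are continuous in $t$ --- so Polya's scholium \ref{Polya} makes those convergences uniform on compacts; combined with the uniform convergence of $f_n(x_t)$ this gives the claim.

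I expect the main obstacle to be step (i), and more precisely the implication ``$g_n'\to g'$ weakly and $g_n(x_0)\to g(x_0)$'' $\Rightarrow$ ``$g_n\to g$ Lebesgue-a.e.'': one must rule out mass of $g_n'$ escaping across the level $x_0$, and this is exactly what the Helly-selection argument --- anchored at $x_0$ by the hypothesis $g_n(x_0)\to g(x_0)$ --- accomplishes. A lesser point to watch, already flagged above, is that uniformity in $t$ in the ``moreover'' part via Polya's scholium really presupposes continuity in $t$ of the limiting F\"ollmer integral, so one should there add the continuity-in-$t$ hypothesis of the second half of Theorem \ref{convLT}.
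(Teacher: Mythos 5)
Your overall route is the paper's: you define $f_n(u)=\int_{x_0}^u g_n(y)\,dy$ and $f(u)=\int_{x_0}^u g(y)\,dy$, apply the convex Tanaka--Meyer formula \eqref{itoLTconv}, reduce everything to (i) $f_n(x_t)\to f(x_t)$ and (ii) $\int_{\bR} L_t\,dg'_n\to\int_{\bR} L_t\,dg'$, and get the ``moreover'' part by splitting into positive and negative parts of $g'_n$ and invoking Scholium \ref{Polya}; the paper's proof is exactly this, except that it disposes of (i) with the words ``notice that $f_n(u)\to f(u)$ for all $u$''. Your treatment of (ii) (the continuous-in-$u$ modification $L_t$ vanishes off $[\udl{x}_t,\ovl{x}_t]$, hence is a legitimate test function for weak convergence of measures) is the paper's, and your caveat that the Polya step needs monotonicity in $t$ of $t\mapsto\int L_t\,dp_n$ and continuity in $t$ of the limit is a fair comment on the paper's terseness rather than a defect of your argument.

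The genuine gap is in your proof of (i), at the step ``$\tilde g'=g'$ and $\tilde g(x_0)=g(x_0)$, therefore $\tilde g=g$''. Equal distributional derivatives only give $\tilde g=g+c$ Lebesgue-a.e., and the value at the single anchor point $x_0$ does not identify $c$, because the pointwise Helly limit $\tilde g$ may differ from the relevant a.e.\ representative exactly at $x_0$ (a related soft spot: the pointwise variation of $g_n$ is not bounded by $|g'_n|(K)$ unless you first regularize $g_n$, which destroys the anchor). In fact no argument can close (i) from the literal hypotheses: take $g_n:=\1_{[x_0-1/n,\,x_0+1/n)}$ and $g\equiv 1$. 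Then $g_n(x_0)=1=g(x_0)$ and $g'_n=\delta_{x_0-1/n}-\delta_{x_0+1/n}\to 0=g'$ weakly, but $g_n\to\1_{\{x_0\}}$ pointwise (this is your $\tilde g$, and it is not $g$), $f_n\to 0$ pointwise while $f(u)=u-x_0$, and \eqref{itoLTconv} together with continuity of $L_t$ gives $\int_0^t g_{n,-}(x_s)\,dx_s\to 0\ne x_t-x_0=\int_0^t g(x_s)\,dx_s$ whenever $x_t\ne x_0$ (this does not touch the ``moreover'' assertion, since there one also assumes $|g'_n|\to|g'|$, which fails here: $|g'_n|\to 2\delta_{x_0}\ne 0$). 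So $f_n\to f$ pointwise --- equivalently $g_n\to g$ in $L^1_{loc}$ --- is really an additional hypothesis, tacitly asserted in the paper's proof; your instinct that this is ``the substance'' is correct, but it must be assumed (or the mode of convergence of $g_n$ strengthened), not derived. Once it is granted, your remaining steps coincide with the paper's.
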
 
\begin{proof}
Define $f(u):=\int_{x_0}^u g(y) dy$ and analogously $f_n$ from $g_n$, and notice that $f_n(u)\to f(u)$ for all $u \in \bR$, so Tanaka-Meyer formula \eqref{itoLTconv} gives the thesis. If moreover  $|g'_n| \to |g'| $ weakly then, since the positive part $\max(h,0)$ of $h$ equals $(h+|h|)/2$, Polya's scholium \ref{Polya} shows local uniformity of the convergence $$\int_{\bR}  L_t(u) \max(g'_n,0)(du)  \to \int_{\bR}  L_t(u) \max(g',0) (du) \, ;$$ working analogously with the negative parts we get the thesis.
\end{proof} 

It is then natural to ask for which paths the above given definition of $\int_0^t f'_{-}(x_s) d x_s$ coincides with the one used  in Theorem \ref{MainEquiv} for $f\in W^{2,q}$. The answer is that  the limit of the Riemann sums 
 $\sum_{t_j\in\pi_n} f'_{-}(x_{t_j})(x_{t_{j+1}\wedge t}-x_{t_j \wedge t})$  exists and equals $\int_0^t f'_{-}(x_s) d x_s$ iff
$\int L_t^{x,\pi_n}(u)f''(du)  $ converges to $\int L^x_t(u)f''(du)  $, as it follows  from \eqref{nTanaka} and \eqref{itoLTconv}.
In particular this holds if $x\in \cL_p\subseteq \cL_1$, so the definition of the F\"ollmer's integral given in Theorem \ref{convLT} is indeed an extension of the one given in Theorem \ref{MainEquiv}.

 \begin{proof}[Proof of Theorem \ref{convLT}]
 Since $f$ is uniformly continuous on compacts, $f_n\to f$ pointwise. Thus, if we can prove that $\int_{\bR} L_{t}\,  df_n''\to \int_{\bR} L_{t} \, df''$, the thesis follows applying \eqref{itoLT} to $f_n$ and taking limits; indeed \eqref{itoLTconv} shows that $\int_0^t f'_{-}(x_s) d x_s$ does not depend on $g$. Define $\hat{g}_n(u):=g_n(-u)$ and apply Fubini's theorem and the identity $f_n''=g_n * f''$ to get that 
 \begin{align}
\label{switchLf}
 \int_{\bR} L_{t} \, df_n''= \int_{\bR} (\hat{g}_n * L_{t}) \,  df'' .
\end{align} 
 Since $L_t$ is zero outside $[\udl{x}_t,\ovl{x}_t]$ and $g$ has compact support,  $L_t, g$ and $\hat{g}_n * L_{t}$ are all $0$ outside a common compact interval $[-A,A]$.
In particular since   $L_t(\cdot)$ is c{\`a}dl{\`a}g it is bounded; since $\sup_u |\hat{g}_n * L_{t}(u)|\leq \sup_u |L_t(u)|$, the thesis follows from the dominated convergence theorem and \eqref{switchLf} if we prove that $\hat{g}_n * L_{t}(u) \to L_t(u)$ for all $u$.
 Notice that 
 \begin{align}
\label{*diff}
(\hat{g}_n * L_{t} - L_t)(u)=\int_{\bR} \hat{g}_n(y) (L_{t}(u-y)-L_{t}(u)) dy \, .
\end{align} 
 Since $L_t(\cdot)$ is right continuous,   for every $ \eps >0 $ and $u$ there is an $n$ such that 
 \begin{align}
\label{LTn}
|L_{t}(u-y)-L_{t}(u)|< \eps  \,  \text{ if   }  \, y\in [-A/n,0] ;
\end{align} 
 since $g=0$ outside $[0,A]$, the integral on the right side of \eqref{*diff} is actually over $[-A/n,0]$, so  $|\hat{g}_n * L_{t}(u) - L_t(u)| < \eps$. 
 
 Finally if $L_t(u)$ is jointly c{\`a}dl{\`a}g in  $u$ and continuous in $t$ then  $n$ such that \eqref{LTn} holds  can be chosen as to hold simultaneously for all $t$ in any given compact set. This  implies that   the convergence is  uniform on compacts,   and so $\int_0^{\cdot} f'_{-}(x_s) d  x_s$ is continuous.
  \end{proof}

\section{Upcrossing representations of local time}
\label{sec-upXing}
 In this section we will consider a  continuous semimartingale $X=(X_t)_{t}$ (with $t\in [0,\8) $ or $t\in [0,T]$) with  canonical semimartingale decomposition $X=M+V$ and with
  (classical\footnote{We refer to the semimartingale local time, i.e. the one for which the Tanaka-Meyer formula holds; this is in general different from the parallel notion of local time for Markov processes.}) local time $\bL_t(u)$ which is (jointly) continuous in $t$ and c{\`a}dl{\`a}g in $u$ (such a version exists, see \cite[Chapter 3, Theorem 7.1]{KarShr:91}). Some of our results specialise to the case where $\bL$ is jointly continuous in $t$ and $u$; this holds in the important case when $dV$ is absolutely continuous with respect to $d\langle M\rangle$ (this follows from \eqref{dLTpku} below, see also \cite[Example 2.2.3]{YenYor13}), in particular if $X$ is a local martingale (under the original probability $\bP$ or a  $\bQ$ such that $\bP\ll \bQ$).

The following is the  main theorem of this section. It essentially says that the pathwise local time sampled along optional partitions $(\pi_n)_n$ exists on a.e. path of a semimartingale, and that a.e. it equals the (classical) local time (in particular, it does not depend on $(\pi_{n})_n$).
 
\begin{theorem}
\label{GenExistsLT}
Assume that  $f:\bR\to \bR$ is the difference of two convex functions, that $\pi_n$ are optional partitions such that $O_{T }(X,\pi_n) \to 0$ a.s. and that $X=(X_t)_{t\in [0,\8)}$ is a continuous semimartingale. If $X$ has a  jointly continuous local time $\bL$, or if $f$ is $C^1$, then there exists a subsequence $(n_k)_k$ such that, for $\omega$ outside a $\bP$-null set (which may depend on $f''$), 
\begin{align}
\label{D-Lsub}
 \sup_{t\leq T} \left| L^{X(\o),\pi_{n_k}(\o)}_{t}(u) - \bL_{t}(\o,u) \right|  \to 0 \quad  \text{ in $L^p(|f''|(du))$ as $k\to \8$   }  \, 
\end{align} 
 simultaneously for all $p\in[1,\8)$, $T<\8$.
\end{theorem}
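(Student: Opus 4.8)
\medskip

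\noindent\textit{Proof strategy.} The plan is to compare the discrete local time $L^{\pi_n}_t(u)$ with the classical one $\bL_t(u)$ one level $u$ at a time, by means of a stochastic integral, and then to integrate the resulting estimate against $|f''|(du)$. Fixing $u$ and applying the discrete Tanaka--Meyer formula \eqref{nTanaka} to the convex function $x\mapsto(x-u)^+$ (for which $f'_-=\1_{(u,\infty)}$ and $f''=\delta_u$), together with the classical Tanaka--Meyer formula for the same function, the two ``first order'' terms are $\sum_j\1_{\{X_{\tau^n_j}>u\}}\bigl(X_{\tau^n_{j+1}\wedge t}-X_{\tau^n_j\wedge t}\bigr)=\int_0^t H^n_s(u)\,dX_s$, with $H^n_s(u):=\sum_j\1_{\{X_{\tau^n_j}>u\}}\1_{(\tau^n_j,\tau^n_{j+1}]}(s)$ and $\pi_n=(\tau^n_j)_j$, and $\int_0^t\1_{\{X_s>u\}}\,dX_s$; subtracting yields the key identity
\[
\tfrac12\bigl(L^{\pi_n}_t(u)-\bL_t(u)\bigr)=\int_0^t K^n_s(u)\,dX_s,\qquad K^n_s(u):=\1_{\{X_s>u\}}-H^n_s(u).
\]
Two elementary observations drive everything: $|K^n_s(u)|\le 1$, and $K^n_s(u)\ne 0$ forces $u$ to lie between $X_s$ and $X_{\tau^n_j(s)}$ (the left end of the $\pi_n$-cell containing $s$), so that $\{u:K^n_s(u)\ne 0\}=\llbracket X_s,X_{\tau^n_j(s)}\rrparenthesis\subseteq[X_s-O_T(X,\pi_n),\,X_s+O_T(X,\pi_n)]$.

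Next I would localize exactly as in the proof of Proposition~\ref{Qa.s.}: stopping at $\sigma_m:=\inf\{t:|X_t|\vee[M]_t\vee|V|_t\ge m\}\wedge T\wedge m$ reduces the problem to the case in which $X$, $[M]$ and $|V|$ are bounded, hence $X\in\cS^p$ for every $p$, and $L^{\pi_n}_\cdot,\bL_\cdot$ are supported in $[-m,m]$ with $|f''|([-m,m])<\infty$; since $\sigma_m\uparrow\infty$ and the stopped objects agree with the original ones on $\{\sigma_m>T\}$, a limit in $m$ recovers the general case, provided one argues with convergence in probability here so as to extract a single subsequence only at the end. For the localized process and fixed $p\ge 2$, the Burkholder--Davis--Gundy inequality \eqref{GenBDG} applied to $\int_0^\cdot K^n(u)\,dX$, the bounds $|K^n_s(u)|^2\le|K^n_s(u)|\le\1_{\llbracket X_s,X_{\tau^n_j(s)}\rrparenthesis}(u)$, the boundedness of $[M]_T$ and $|V|_T$ (used to pull out the extra powers), and Fubini's theorem give
\[
\bE\int_{[-m,m]}\sup_{t\le T}\bigl|L^{\pi_n}_t(u)-\bL_t(u)\bigr|^p\,|f''|(du)\ \le\ C_{p,m}\,\bE\int_0^T\Phi_n(s)\,dA_s,
\]
where $A:=[M]+|V|$ and $\Phi_n(s):=|f''|\bigl(\llbracket X_s,X_{\tau^n_j(s)}\rrparenthesis\bigr)\le|f''|\bigl([X_s-O_T(X,\pi_n),\,X_s+O_T(X,\pi_n)]\bigr)$.

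Since $O_T(X,\pi_n)\to 0$ a.s., one has $\limsup_n\Phi_n(s)\le|f''|(\{X_s\})$ for every $s$, while $\Phi_n\le|f''|([-m-1,m+1])$ uniformly and $A_T\le 2m$; dominated convergence then reduces everything to proving $\bE\int_0^T|f''|(\{X_s\})\,dA_s=0$. Writing $a(u):=|f''|(\{u\})$, which vanishes off the countable (hence Lebesgue-null) set of atoms of $f''$, the $d[M]$-contribution equals $\int_0^T a(X_s)\,d[M]_s=\int_\bR a(v)\bL_T(v)\,dv=0$ by the occupation times formula. For the $d|V|$-contribution: if $f\in C^1$ then $a\equiv 0$ and there is nothing to prove; if instead $\bL$ is jointly continuous, then $\bL_t(\cdot)$ is continuous, and comparing the Tanaka formulas for $(x-u)^+$ written with the left- and with the right-derivative of $x\mapsto(x-u)^+$ --- and using that $\int_0^\cdot\1_{\{X_s=u\}}\,dM_s\equiv 0$, since its quadratic variation $\int_0^\cdot\1_{\{X_s=u\}}\,d\langle M\rangle_s$ vanishes by the occupation formula --- forces $\int_0^t\1_{\{X_s=u\}}\,dV_s=0$ for all $t$, whence $\int_0^T\1_{\{X_s=u\}}\,d|V|_s=0$ (the total variation of a function identically equal to $0$); summing over the atoms gives $\int_0^T a(X_s)\,d|V|_s=0$ a.s. This comparison step --- where the dichotomy in the hypotheses enters precisely to control how the drift pushes local time across an atom of $f''$ --- is the point I expect to be the main obstacle.

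Finally, the previous steps yield $\int_\bR\sup_{t\le T}|L^{\pi_n}_t(u)-\bL_t(u)|^p\,|f''|(du)\to 0$ in probability for each $p\ge 2$ and each $T<\infty$; a diagonal extraction over $p\in\{2,4,8,\dots\}$ and $T\in\bN$ produces a single subsequence $(n_k)_k$ along which these quantities converge to $0$ a.s., and H\"older's inequality on the finite measure $|f''|\bigl([\udl X_{T},\ovl X_{T}]\bigr)$, together with monotonicity in $T$, extends the a.s.\ convergence to all $p\in[1,\infty)$ and all $T<\infty$, which is the claim.
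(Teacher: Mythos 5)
Your proposal is correct and follows essentially the same route as the paper: you subtract the discrete and classical Tanaka formulas at each level $u$ to write $L^{\pi_n}_t(u)-\bL_t(u)$ as a stochastic integral whose integrand is bounded by $1$ and supported in $\llbracket X_s,X_{\tau^n_j(s)}\rrparenthesis$, control it via \eqref{GenBDG} after localizing to $\cS^p$, kill the level-set contribution through the occupation formula and \eqref{dLTpku} (exactly where the dichotomy ``$\bL$ jointly continuous or $f\in C^1$'' enters in the paper's Theorem~\ref{ExistsLT} as well), and conclude by a diagonal extraction over countably many $(p,T)$ plus H\"older on the compactly supported difference. The only differences are organizational --- you integrate over $u$ by Fubini before passing to the limit, getting the single bound $\bE\int_0^T|f''|(\llbracket X_s,X_{\tau^n_j(s)}\rrparenthesis)\,dA_s$, whereas the paper first proves the pointwise-in-$u$ statement of Theorem~\ref{ExistsLT} and then integrates --- together with one harmless slip: with your definition $\sigma_m:=\inf\{\cdots\}\wedge T\wedge m$ the event $\{\sigma_m>T\}$ is empty, so the de-localization should be phrased on $\{\inf\{t:|X_t|\vee[M]_t\vee|V|_t\ge m\}>T\}$ (or the cap $\wedge\,T$ dropped).
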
 

 Note that applying Theorem \ref{GenExistsLT}  with $f(x)=x^2/2\in C^1$ gives in particular that a.e. path of a continuous semimartingale is in $\cL_p$ for all $p<\8$; indeed, 
$L^{X,\pi_{n_k}}_{t}(u) \to \bL_{t}(u) $ strongly (and thus weakly) in $L^p(du)$  $a.s.$, locally uniformly in $t$. 

The previous theorem follows from the following technical statement.

\begin{theorem}
\label{ExistsLT}
Let $\pi_n$ be optional partitions such that $O_{T }(X,\pi_n)\to 0$ a.s., $p\in[1,\8), T<\infty$, $X\in \cS^p$, $\mu$ be a sigma-finite positive Borel measure on $\bR$, and define 
 \begin{align}
\label{D-L}
h^{\pi_n}(u):= \left\| \sup_{t\leq T} \left| L^{X(\o),\pi_n(\o)}_{t}(u) - \bL_{t}(\o,u) \right| \right\|_{L^p(\bP(d\omega))}    \, , \, u\in \bR .
\end{align} 
Then $h^{\pi_n}(\cdot)$ is bounded and $(h^{\pi_n}(\cdot))_n$ converges pointwise (resp. $\mu$ a.e.) to $0$ if $\bL$ is jointly continuous (resp. if  $\mu$ is a measure  with no atoms).
 \end{theorem}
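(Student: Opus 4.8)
The plan is to reduce the whole statement, via Tanaka's formula, to controlling a single sequence of stochastic integrals whose integrands vanish in the limit. First I would fix $u\in\bR$, write $\pi_n=(\tau_j^n)_j$, and apply the discrete Tanaka-Meyer formula \eqref{nTanaka} to the convex function $f(x)=(x-u)^+$, for which $f'_-=\1_{(u,\8)}$ and $f''=\delta_u$. Setting $H^{n,u}:=\sum_j\1_{\{X_{\tau_j^n}>u\}}\1_{(\tau_j^n,\tau_{j+1}^n]}$, a predictable process with $|H^{n,u}|\le1$, the sum in \eqref{nTanaka} is the stochastic integral $\int_0^tH^{n,u}_s\,dX_s$, so
\[
\tfrac12\,L^{\pi_n}_t(u)=(X_t-u)^+-(X_0-u)^+-\int_0^tH^{n,u}_s\,dX_s .
\]
Subtracting the classical Tanaka formula $\tfrac12\bL_t(u)=(X_t-u)^+-(X_0-u)^+-\int_0^t\1_{\{X_s>u\}}\,dX_s$, valid for the right-continuous-in-$u$ version of $\bL$ fixed at the beginning of this section, gives
\[
L^{\pi_n}_t(u)-\bL_t(u)=2\int_0^tK^{n,u}_s\,dX_s,\qquad K^{n,u}_s:=\1_{\{X_s>u\}}-H^{n,u}_s .
\]
The key elementary remark is that $K^{n,u}_s\neq0$ forces $X_s$ and $X_{\tau_j^n}$ (where $\tau_j^n$ is the left endpoint of the $\pi_n$-cell containing $s$) to lie on opposite sides of $u$, so that $|X_s-u|\le O_T(X,\pi_n)$ on $\{K^{n,u}\neq0\}$; hence $|K^{n,u}_s|\le\1_{\{|X_s-u|\le O_T(X,\pi_n)\}}$, and, since $O_T(X,\pi_n)\to0$ a.s., $\limsup_n\1_{\{|X_s-u|\le O_T(X,\pi_n)\}}\le\1_{\{X_s=u\}}$ for a.e.\ $\omega$ and every $s\le T$.

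For the boundedness claim I would apply \eqref{GenBDG} to $\int_0^\cdot K^{n,u}\,dX$: since $|K^{n,u}|\le1$, its $\cS^p$-norm is at most $\|[M]_T^{1/2}\|_{L^p}+\|\int_0^T d|V|_t\|_{L^p}=\|X\|_{\cS^p}$, whence $h^{\pi_n}(u)\le2C_p\|X\|_{\cS^p}$, a bound independent of $n$ and $u$. For the convergence I would split $2\int_0^\cdot K^{n,u}\,dX$ into its martingale and finite-variation parts. By \eqref{GenBDG}, the martingale part has $L^p$-norm at most $2C_p\|(\int_0^T(K^{n,u}_s)^2\,d[M]_s)^{1/2}\|_{L^p}$, and here $\int_0^T(K^{n,u}_s)^2\,d[M]_s\le\int_0^T\1_{\{|X_s-u|\le O_T\}}\,d[M]_s$, whose $\limsup$ in $n$ is, by Fatou's lemma, at most $\int_0^T\1_{\{X_s=u\}}\,d[M]_s=\int_\bR\1_{\{v=u\}}\bL_T(v)\,dv=0$ by the occupation-time formula; a last dominated-convergence step in $\omega$ (domination by $[M]_T^{1/2}\in L^p$) then shows this part tends to $0$ for \emph{every} $u\in\bR$.

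So the real content is the finite-variation part, bounded by $2\int_0^T|K^{n,u}_s|\,d|V|_s\le2R_n(u)+2G(u)$ with $R_n(u):=\int_0^T\1_{\{X_s\neq u\}}|K^{n,u}_s|\,d|V|_s$ and $G(u):=\int_0^T\1_{\{X_s=u\}}\,d|V|_s$. Since $|K^{n,u}_s|\1_{\{X_s\neq u\}}\to0$ for $d|V|$-a.e.\ $s$ and $|V|_T<\8$, dominated convergence (first in $s$, then in $\omega$, dominated by $|V|_T\in L^p$) gives $\|R_n(u)\|_{L^p}\to0$, hence $\limsup_n h^{\pi_n}(u)\le2C_p\|G(u)\|_{L^p}$, and it only remains to show $G(u)=0$ a.s. If $\mu$ has no atoms, the averaged occupation measure $\bar\rho(A):=\bE[\int_0^T\1_A(X_s)\,d|V|_s]$ is a finite measure (of total mass $\le\|X\|_{\cS^p}$), hence has at most countably many atoms, and, as $\bE[G(u)]=\bar\rho(\{u\})$, we get $G(u)=0$ a.s.\ for every $u$ outside that countable, hence $\mu$-null, set, so $h^{\pi_n}(u)\to0$ for $\mu$-a.e.\ $u$. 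If instead $\bL$ is jointly continuous, then $\int_0^\cdot\1_{\{X_s=u\}}\,dM_s$ has identically vanishing quadratic variation (again by the occupation-time formula) and is therefore $\equiv0$, so that $\int_0^t\1_{\{X_s=u\}}\,dV_s=\int_0^t\1_{\{X_s=u\}}\,dX_s=\tfrac12(\bL_t(u)-\bL_t(u-))$, the classical jump relation (see \eqref{dLTpku} below); this vanishes for all $t$ by continuity of $\bL$ in $u$, and since a function of bounded variation that is identically $0$ has zero total variation, $G(u)=\int_0^T\1_{\{X_s=u\}}\,d|V|_s=0$ for \emph{every} $u$, giving $h^{\pi_n}(u)\to0$ pointwise.

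The hard part is exactly this last step --- proving $G(u)=0$, i.e.\ controlling the finite-variation part at a level $u$ on which $X$ spends positive $|V|$-mass --- and the two hypotheses call for genuinely different arguments: a soft Fubini/countability argument on the averaged occupation measure when $\mu$ is non-atomic, versus the structural jump relation between $u\mapsto\bL_t(u)$ and the finite-variation part $V$ when $\bL$ is continuous. (The joint measurability of $(\omega,u)\mapsto\sup_{t\le T}|L^{\pi_n}_t(u)-\bL_t(u)|$, needed for $h^{\pi_n}$ to be a measurable function of $u$ so that ``$\mu$-a.e.'' is meaningful, is routine from the explicit formula \eqref{LpiUpDo} for $L^{\pi_n}$ and a jointly measurable version of $\bL$.)
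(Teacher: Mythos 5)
Your proof is correct and follows essentially the same route as the paper's: you subtract the discrete from the classical Tanaka formula (using $(x-u)^+$ where the paper uses $|x-u|$, an immaterial change), bound the resulting predictable integrand by $\1_{\{|X_s-u|\le O_T(X,\pi_n)\}}$, invoke \eqref{GenBDG} both for the uniform bound and for the convergence, and reduce everything to the vanishing of $\int_0^{\cdot}\1_{\{X_s=u\}}\,dX_s$, which you settle via \eqref{dLTpku} exactly as the paper does when $\bL$ is jointly continuous. The only local deviation is the non-atomic-$\mu$ case, where the paper uses Minkowski's integral inequality in $L^p(\mu)$ together with Fubini, while you use countability of the atoms of the averaged occupation measure $A\mapsto\bE\int_0^T\1_A(X_s)\,d|V|_s$; both arguments are valid.
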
 

The fact that $(h^{\pi_n}(\cdot))_n$ converges pointwise to zero was given an involved proof\footnote{The uniformity in $t$, not stated in \cite{wur80}, follows easily by Doob's $L^2$-inequality since \eqref{LocTDifInt} shows that $(L_t^{\pi_n,X}(u)-\bL_t(u))_t$ is a $L^2$ bounded martingale for each $u,n$.} in \cite{wur80} in the case where $X$ is a continuous martingale bounded in $L^2$, $p=2$ and $\pi_n$ are deterministic partitions such that $|| \pi_n||\to 0$. In the case where $X$ is in a class of continuous Dirichlet processes which includes $\cS^2$ semimartingales and the partitions are of Lebesgue-type, it is shown in \cite[Theorem 2.5 and Proposition 2.7]{ber87} that $L^{X(\o),\pi_n(\o)}_{t}(u) \to \bL_{t}(\o,u)$ weakly in $L^1(d\bP \times du)$ for  each $t$.

Moreover, Lemieux \cite[Theorem 2.4]{lem83} has derived 
a version of Theorem \ref{GenExistsLT} where the $L^p(|f''|(du))$ convergence is replaced by the uniform convergence, 
 in the special case where the partitions are of Lebesgue-type.
  For the case of continuous local martingales one can also  consult   Perkins \cite{Per79} or Chacon et al. \cite[Theorem 2 and Remark 2]{cjpt81}, or Perkowski and Pr\"omel \cite[Theorem 3.5 and Remark 3.6]{PePr14}, who actually prove convergence not only for $\bP$ a.e. $\omega$ but even quasi surely with respect to the set of all local martingale measures.
  Although our approach yields a weaker type of convergence, it has a simple proof and it works for continuous semimartingales and general optional partitions such that $O_{T }(X,\pi_n)\to 0$ a.s..

In the special case of Lebesgue partitions $\pi=\pi_{\epsilon \bZ}$,   Theorem \ref{ExistsLT}  closely relates to the downcrossing representation of local time conjectured by L\'evy (proved by It\^o and McKean for Brownian motion, extended by El Karoui to semimartingales, and found in \cite[Theorem VI.1.10]{ReYo99}), which  states that, for an $X\in \cS^p$,  
$$ \lim_{\eps \to 0} \left\| \sup_{t\leq T} \left| \eps D^{\eps}_t(\o,0)   - \bL_{t}(\o,0) \right| \right\|_{L^p(\bP(d\omega))} = 0  ,$$
where $D^{\eps}_t(\o,0) $ (defined in \eqref{LevelST} below) is the number of downcrossings at level $0$.
Indeed, as we now explain, L\'evy's representation above is equivalent to the fact that $h^{\pi_{\epsilon_n\bZ}}(0)\to 0$ whenever $0<\eps_n \to 0$. 

Given a continuous path $x=(x_s)_{s\leq t}$ and $a<b$, we set
 $\sigma^{a,b}_0:=0$, $\tau^{a,b}_0=\inf\{t:x_t=b\}$  and, for $k\geq 1$, we define
\begin{equation}
\label{LevelST}
\begin{split}
	&\sigma^{a,b}_k:=\inf\{t>\tau^{a,b}_{k-1}:x_t=a\}, \quad \tau^{a,b}_k:=\inf\{t>\sigma^{a,b}_k:x_t=b\},\\
& D^{\eps}_t(u):=\max\{k:\sigma_k^{u,u+\eps} \leq t\}.
\end{split}
\end{equation} 
It turns out that the downcrossings $D^{\eps}_t(u)$ of $(X_s)_{s\leq t}$ from $u+\eps$ to $u$ are closely related to the local time along $\pi_{\eps \bZ}$.  Indeed, the  upcrossings $U^{\eps}_t(u):=\max\{k:\tau_k^{u+\eps,u} \leq t\}$ of $(X_s)_{s\leq t}$ from $u$ to $u+\eps$ differ from $D^{\eps}_t(u)$ by at most $1$, so using   \eqref{LpiUpDo} we get that 
 \begin{align}
\label{LpiUpDo3}
L^{\pi_{\eps \bZ}}_t(u)/2=\textstyle U^{\pi_{\eps \bZ}}_t(u) (\eps - u ) + D^{\pi_{\eps \bZ}}_t(u) u +  \1_{ [x_{t_J},x_t )}(u) |x_{t} -u|  .
\end{align} 
The last term is bounded by $O_t(x,\pi_{\eps \bZ})\leq \eps$ and, considering $u=0$, we get that
\begin{align}
\label{LTvsDC}
\textstyle | L^{\pi_{\eps \bZ}}_t(0)/2 - \eps D^{\eps}_t(0) | \leq 2 \eps \, ,
\end{align} 
which concludes the proof of equivalence. 

We recall the following fact, for which we refer to \cite[Chapter 6, Theorem 1.7]{ReYo99}: 
\begin{align}
\label{dLTpku}
 2 \int_0^{\cdot}  \1_{\{X_s=u\}}   dX_s = 2 \int_0^{\cdot}  \1_{\{X_s=u\}}   dV_s =\bL_{\cdot}(u)-\bL_{\cdot}(u-) \quad \textrm{a.s.},\; \forall u\in \bR.
\end{align}

\begin{proof}[Proof of Theorem \ref{ExistsLT}]
Consider the convex function $f(x):=|x-u|$ and let $sign(x-u)$ be its left-derivative and  $2\delta_{u}$ its second (distributional) derivative. Subtracting from the discrete-time Tanaka-Meyer formula  \eqref{nTanaka}  its  continuous-time stochastic counterpart we get that
\begin{align}
\label{LocTDifInt}
0= \int_0^t  (H_s^{\pi_n}(u) -H_s(u)) dX_s+ ( L_t^{\pi_n,X}(u)-\bL_t(u))/2,
\end{align} 
  where using $\pi_n=(\t_i^n)_i$ we define the predictable processes 
\[H_s^{\pi_n}(u):=\sum_{i}sign(X_{\t_i^n}-u) 1_{(\t_i^n, \t_{i+1}^n] }(s) 
\,  \text{ and   }  \, 
H_s(u):=sign(X_s-u) .\]
Now  $h^{\pi_n}(u)\to 0$ follows from \eqref{GenBDG} and \eqref{LocTDifInt} if we show that 
$\int_0^{\cdot} H_s^{\pi_n}(u)  dX_s \to \int_0^{\cdot} H_s(u)  dX_s$ in $\cS^p$. To this end notice that 
\begin{align}
\label{domK}
 |H_s^{\pi_n}(u) -H_s(u) |\leq K_s^{\pi_n}(u):= 2 \times  1_{\{O_s(X,\pi_n)\geq |X_s-u| \}}  ,  
\end{align} 
and that since $X_{\cdot}$ and $ O_{\cdot}(X,\pi_n)$ are continuous adapted processes, $K_{\cdot}^{\pi_n}(u)$ is predictable, so  it is enough to prove that $ \int_0^{\cdot} K_s^{\pi_n}(u) dX_s \to 0  $ in $\cS^p$.
Since  $O_{T}^{\pi_n}\to 0$  a.s. implies that $K_{t}^{\pi_n}(u)\to 0$  a.s.  on $\{ X_t \neq u\}$ for all $t\leq T$, and since $K^{\pi_n}\leq 2$, the thesis follows from the (deterministic) dominated convergence theorem if $\| \int_0^{\cdot} 1_{\{ X_s= u\}} dX_s \|_{\cS^p}=0$, which by \eqref{dLTpku} holds for all $u$ if $\bL$ is continuous. Since Minkowski inequality for integrals says that  
 \[ \left\| \int_0^T   \1_{\{X_s=u\}}  d|V|_s  \right\|_{L^p(\mu)} \leq 
 \int_0^T  \| \1_{\{X_s=u\}} \|_{L^p(\mu)}  d|V|_s 
 = \int_0^T  ( \mu(\{X_s\}) )^{1/p}  d|V|_s \, , 
 \] 
 which is zero for $\mu$ which has no atoms. Using \eqref{dLTpku}, considering $L^p(\mu\otimes\bP)$ norm and using Fubini, we conclude that $ \| \int_0^{\cdot} 1_{\{ X_s= u\}} dX_s \|_{\cS^p}=0$ for $\mu$ a.e.\ $u$, and so  $h^{\pi_n}\to 0$ $\mu$ a.e.. Finally \eqref{GenBDG}, \eqref{LocTDifInt} and \eqref{domK}  imply that 
\[ h^{\pi_n}(u) \leq  C_p \left\|  \int_0^{\cdot}  2(H_s^{\pi}(u) -H_s(u)) dX_s  \right\|_{\cS^p}  \leq 4C_p \|  X \|_{\cS^p}  \quad  \text{ for all   }  u\in \bR  , \, \]
concluding the proof.
   \end{proof}

\begin{proof}[Proof of Theorem \ref{GenExistsLT}]
Let $(\tau_m)_m$ a sequence of stopping times which prelocalizes $X$ to $\cS^p$ (see Emery \cite[Theoreme 2]{Em79}), i.e. $\tau_m \uparrow \infty$ a.s. and  $X^{\tau_m -}\in \cS^p$ for all $m$.
  Let $\mu_i(A):=|f''|(A\cap [-i,i])$  and set
\[ \textstyle G_n(\o,T,u):=   \sup_{t\leq T} | L^{X(\o),\pi_{n}(\o)}_{t}(u) - \bL_{t}(u,\o) | \]
and $G_n^m:= \1_{\{T< \tau_m \}} G_n$.  Since $\mu_i$ is a finite measure, Theorem \ref{ExistsLT} implies that, as $n\to \infty$, $G_n^m$ converges  to  $0$ in $L^p(\bP\times \mu_i)$, for all $m,i\in \bN$ and $T\geq 0$. Passing to a subsequence (without relabelling) we can get convergence fast in $L^p(\bP\times \mu_i)$ and so, for $\omega$ outside a $\bP$-null set $N_{i,m}^{p,T}$,  $G_n^m(\o,T,\cdot)$ converges  to  $0$  in $L^p(\mu_i)$.
 Then along a diagonal subsequence  we obtain that $G_n^m(\o,T,\cdot)$ converges  to  $0$ in $L^p( \mu_i)$ for all $i,m,p,T \in \bN \setminus \{0\}$ for every $\o$ outside  the null set $N_{f''}:=\cup_{i,m,T,p\in \bN\setminus \{0\}} N_{i,m}^{p,T}$.
 Since $G_n=G_n^m$ on $\{T< \tau_m \}$, $G_n\to 0$ in $L^p( \mu_i)$ for all $i,p,T \in \bN \setminus \{0\}$ for every $\o$ outside  $N_{f''}$.
Since outside a compact set  $G_n(\o,T,\cdot)=0$  for all $n$,  convergence in $L^p(\mu_i)$ for arbitrarily big $i,p$ implies convergence in $L^p(|f''|)$ for all $p\in [1,\8)$. Since $G_n(\o,\cdot,u)=0$ is increasing, convergence for arbitrarily big $T$ implies convergence for all $T\in[0,\8)$.
\end{proof}

\section{Dependence on the partitions}\label{sec-dep}
In this section we investigate the extent to which the pathwise quadratic variation $ \langle x\rangle^{\Pi}:=\lim_n \langle x\rangle^{\pi_n}$ depends on the sequence of partitions $\Pi:=(\pi_n)_n$.
Instead of constructing explicit examples we  show that, for functions with a highly oscillatory behavior, the pathwise quadratic variation depends in the most extreme way possible on $(\pi_n)_n$. We then build on this fact and state how this applies to the general path of a Brownian motion; since taking care of 
all the thorny technicalities which arise from the dependence in $\o$ (i.e. tracking  the null sets and ensuring measurability) 
requires a long technical proof, we relegate this to the appendix.

Our work builds on two facts already mentioned (without proof) by L\'evy in \cite[Pag. 190]{le65}: that $\inf_{\pi}\langle x\rangle^{\pi}_1=0$ for \emph{every} continuous function $x$ and that for a.e. path $B(\omega)$ of a Brownian motion $\sup_{\pi}\langle B(\omega)\rangle^{\pi}_1=\8$. The corresponding proofs can be found in Freedman \cite[Pag. 47 and 48]{fre83}; the second fact can be found in a strengthened form and with an alternative proof in Taylor \cite[Corollary in Section 4]{tay72}. Our first result combines and generalises the above: we show that, with a suitable choice of $(\pi_n)_n$,  the pathwise quadratic variation may be equal to an arbitrary given increasing process $a$. Notice that we do not even make assume $a$ is right--continuous. We will denote with $D_n$ the dyadics of order $n$ in $[0,1]$, i.e $D_n:= [0,1] \cap \bN 2^{-n}$.

\begin{theorem}
	\label{ArbQvFn}
	Let $x:[0,1]\to \bR$ be a continuous function such that for every $0\leq c<d \leq 1$ there exist partitions $(\hat{\pi}_n)_n$ of $[c,d]$ such that 
	$\lim_n \langle x\rangle^{\hat{\pi}_n}_{(c,d]} = \infty$.
	Then, if $a:[0,1]\to [0,\infty)$ is an  increasing function such that $a_0=0$, there exist refining  partitions $(\pi_n)_n$ of $[0,1]$ such that $D_n\subseteq \pi_n$  for all $n$ and \begin{align} 
\label{<x>ConvA}
 \langle x \rangle^{\pi_n}_{t} \to a_{t}\quad   \text{ for all } t\in [0,1] \text{  as   }  \,  n\to \8 ,
\end{align} 
and the convergence is uniform if $(a_{t})_t$ is continuous.
 Moreover, given arbitrary partitions $(\bar{\pi}_n)_n$, one can choose the $(\pi_n)_n$  such that $\bar{\pi}_n  \subseteq \pi_n$ for all $n$.
 \end{theorem}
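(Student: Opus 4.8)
The plan is to reduce everything to one \emph{titration lemma}: for every subinterval $[c,d]\subseteq[0,1]$, every $\theta\geq0$ and every $\varepsilon>0$ there is a partition $\rho$ of $[c,d]$ with $|\langle x\rangle^{\rho}_{(c,d]}-\theta|<\varepsilon$. Its two ingredients are $\inf_{\rho}\langle x\rangle^{\rho}_{(c,d]}=0$ (L\'evy's fact, valid for any continuous function; Freedman \cite[pp.\ 47--48]{fre83}) and $\sup_{\rho}\langle x\rangle^{\rho}_{(c,d]}=\infty$ (the standing hypothesis on $x$, applied to the subinterval $[c,d]$). I would prove the lemma thus: if $\theta<\varepsilon$, L\'evy's fact alone gives $\rho$; otherwise, by uniform continuity pick $c'\in(c,d)$ so close to $c$ that $x$ oscillates by less than $\sqrt\varepsilon/2$ on $[c,c']$, take a partition of $[c',d]$ of quadratic variation $<\varepsilon/2$, and on $[c,c']$ titrate. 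For the titration, take (by the hypothesis) a partition $\sigma$ of $[c,c']$ with $\langle x\rangle^{\sigma}_{(c,c']}>\theta$ and pass from $\{c,c'\}$ to $\sigma$ by inserting one point of $\sigma$ at a time; an insertion of a point $r$ with current neighbours $r^{-},r^{+}$ changes the quadratic variation by $2(x_{r}-x_{r^{-}})(x_{r^{+}}-x_{r})$, of modulus at most twice the squared oscillation of $x$ on $[c,c']$, hence $<\varepsilon/2$; since the quadratic variation starts at $(x_{c'}-x_{c})^{2}<\varepsilon/4<\theta$ and ends above $\theta$, the first intermediate partition reaching $\theta$ has quadratic variation in $[\theta,\theta+\varepsilon/2)$, and gluing it to the $[c',d]$ piece via \eqref{QVadditive} yields $\rho$.

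Granting the lemma I construct $(\pi_{n})_{n}$ by induction. Let $d_{1},d_{2},\dots$ list the non-dyadic discontinuities of $a$ and put $Q_{n}:=\pi_{n-1}\cup D_{n}\cup\{d_{i}:i\leq n\}$ (together with $\bar\pi_{n}$ for the final assertion); write $Q_{n}=\{0=q_{0}<\dots<q_{M_{n}}=1\}$, so that $q_{j+1}-q_{j}\leq2^{-n}$ since $D_{n}\subseteq Q_{n}$. On each $[q_{j},q_{j+1}]$ use the lemma with target $\theta_{j}:=a_{q_{j+1}}-a_{q_{j}}\geq0$ and tolerance $2^{-n}/M_{n}$ to choose a partition $\rho^{(j)}$, and set $\pi_{n}:=\bigcup_{j}\rho^{(j)}$. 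Then $D_{n}\subseteq\pi_{n}$, $\pi_{n-1}\subseteq\pi_{n}$, and $O_{1}(x,\pi_{n})\to0$ because every cell of $\pi_{n}$ lies in a common $[q_{j},q_{j+1}]$ of length $\leq2^{-n}$. By \eqref{QVadditive} and the choice of tolerances, $|\langle x\rangle^{\pi_{n}}_{q_{i}}-a_{q_{i}}|\leq\sum_{j<i}2^{-n}/M_{n}\leq2^{-n}$ for every $q_{i}\in Q_{n}$; hence $\langle x\rangle^{\pi_{n}}_{t}\to a_{t}$ at every dyadic $t$ and, for $n\geq i$, at every $d_{i}$, i.e.\ at every discontinuity point of $a$. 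For a continuity point $t$, with $k:=\lfloor2^{n}t\rfloor$, the values $\langle x\rangle^{\pi_{n}}_{t}$ and $a_{t}$ are each squeezed, up to an error $O(2^{-n}+O_{1}(x,\pi_{n})^{2})$ arising from \eqref{QVadditive}--\eqref{QVincreas} and the non-monotonicity of $t\mapsto\langle x\rangle^{\pi_{n}}_{t}$ inside a cell, between $\langle x\rangle^{\pi_{n}}_{k2^{-n}}$ (within $2^{-n}$ of $a_{k2^{-n}}$) and $\langle x\rangle^{\pi_{n}}_{(k+1)2^{-n}}$ (within $2^{-n}$ of $a_{(k+1)2^{-n}}$), both of which tend to $a_{t}$; so $\langle x\rangle^{\pi_{n}}_{t}\to a_{t}$. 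If $a$ is continuous there are no $d_{i}$ and, by uniform continuity of $a$, all these bounds are uniform in $t$, giving uniform convergence.

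The only genuine difficulty is the titration lemma, and more precisely the observation that the target mass $\theta$ must be realised on a \emph{single} short interval rather than distributed over the cells of a fine grid — the latter would incur a stage error proportional to the uncontrolled number of cells — whereas on a short interval one point-insertion moves the quadratic variation by at most twice its (small) squared oscillation, so a discrete intermediate-value argument pins the quadratic variation near $\theta$, while L\'evy's fact annihilates the quadratic variation on the rest of $[c,d]$ in a single stroke. Once the lemma is available the inductive bookkeeping is routine; in particular the tolerance $2^{-n}/M_{n}$ caps the stage-$n$ error at $2^{-n}$ no matter how many points $\pi_{n-1}$ already carries, which is also why the last assertion of the theorem costs nothing: enlarging $Q_{n}$ by the given $\bar\pi_{n}$ only enlarges $M_{n}$.
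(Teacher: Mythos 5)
Your proof is correct, and its outer scaffolding is the same as the paper's: a grid $Q_n$ containing $D_n$, $\bar\pi_n$, the previous partition and finitely many jump times of $a$; per-cell targets $a_{q_{j+1}}-a_{q_j}$ met with tolerance $2^{-n}/M_n$ so the stage error is $2^{-n}$ at every grid point; then convergence at a dense set together with the jump times, plus $O_1(x,\pi_n)\to 0$ — which is exactly Lemma \ref{ConvAtJumps} (and Scholium \ref{Polya} for uniformity), so you could simply cite it instead of re-deriving the squeeze. Where you genuinely diverge is in the single-interval lemma. The paper realises a prescribed value via Freedman's exact division construction: refining a cell $[c,d]$ at the successive hitting times of the levels $x(c)+(x(d)-x(c))i/k$ divides the quadratic variation \emph{exactly} by $k$; combined with the hypothesis (quadratic variation in $[1,1+1/i]$ on each of $i$ equal subintervals) this places the value in a prescribed dyadic window, and it yields L\'evy's fact $\inf_\pi\langle x\rangle^\pi=0$ as a by-product, making the paper's argument self-contained. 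Your titration lemma instead takes $\inf_\pi\langle x\rangle^\pi_{(c',d]}=0$ as a cited black box (the same Freedman reference the paper gives) and hits the target by a discrete intermediate-value argument: on a short initial piece $[c,c']$ with oscillation below $\sqrt{\varepsilon}/2$, inserting the points of a high-quadratic-variation partition one at a time changes the quadratic variation by $2|x_r-x_{r^-}||x_{r^+}-x_r|<\varepsilon/2$ per step, so the first intermediate partition reaching $\theta$ lands in $[\theta,\theta+\varepsilon/2)$, and \eqref{QVadditive} glues in the negligible remainder on $[c',d]$. This mechanism is more elementary and more robust — it needs no level-hitting structure, only \emph{some} partition of large quadratic variation and continuity of $x$ — at the cost of outsourcing the inf-zero fact, whereas the paper's construction buys exact arithmetic control and self-containedness; the refining/$D_n\cup\bar\pi_n$ requirements are handled identically in both.
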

 To prove that convergence occurs at all times simultaneously, we will need the following simple lemma.
  \begin{lemma}
\label{ConvAtJumps}
Let $a:[0,1]\to [0,\infty)$ be increasing, $x:[0,1]\to \bR$ be continuous and  $(\pi_n)_n$ be partitions of $[0,1]$ such that $O(x,\pi_n)\to 0$, and assume that
\[  \langle x\rangle^{\pi_n}_t \to a(t)   \quad  \text{ for all   }\,  t\in F  \subseteq  [0,1]  \, \,  \text{ as   }  \,  n\to \8 . \] 
 If $F$ is dense in $[0,1]$ and contains the times of jump of $a$ then $\langle x\rangle^{\pi_n}_{\cdot} \to a_{\cdot}$ pointwise on $[0,1]$, and if $a$ is continuous  the convergence is uniform.
 \end{lemma}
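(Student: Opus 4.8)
The plan is to exploit $O(x,\pi_n)\to 0$ in order to pass from the functions $t\mapsto\langle x\rangle^{\pi_n}_t$, which need not be monotone, to genuinely increasing proxies, and then to run a routine squeeze argument along the dense set $F$.

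First I would introduce, for each $n$, the c\`adl\`ag increasing function
\[ G_n(t):=\sum_{\pi_n\ni t_k\le t}(x_{t_{k+1}}-x_{t_k})^2,\qquad t\in[0,1], \]
i.e.\ the cumulative distribution function of the measure $\mu_n$ from \eqref{MeasMu}. Writing $t_j:=\max\{t_k\in\pi_n:t_k\le t\}$, a direct comparison of the defining sums gives $\langle x\rangle^{\pi_n}_t-G_n(t)=(x_t-x_{t_j})^2-(x_{t_{j+1}}-x_{t_j})^2$, and since $x$ is continuous we have $|x_t-x_{t_j}|\le O(x,\pi_n)$ and $|x_{t_{j+1}}-x_{t_j}|\le O(x,\pi_n)$ (the latter by letting $b\uparrow t_{j+1}$ in \eqref{Osc}); hence
\[ \sup_{t\in[0,1]}\bigl|\langle x\rangle^{\pi_n}_t-G_n(t)\bigr|\le 2\,O(x,\pi_n)^2\longrightarrow 0 . \]
In particular $G_n(t)\to a(t)$ for every $t\in F$, and it suffices to prove $G_n(t)\to a(t)$ for all $t\in[0,1]$.

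Now fix $t\in(0,1)\setminus F$ (for $t\in F$ there is nothing to prove; at the endpoints $t\in\{0,1\}$ one has $\langle x\rangle^{\pi_n}_0=0=a_0$, and $1\in F$ in the situations of interest). Since $F$ contains every point of discontinuity of $a$, the increasing function $a$ is continuous at $t$, so, given $\eps>0$, the continuity of $a$ at $t$ together with the density of $F$ furnishes $s,u\in F$ with $s<t<u$ and $a(u)-a(s)<\eps$. The monotonicity of $G_n$ yields $G_n(s)\le G_n(t)\le G_n(u)$ for every $n$; letting $n\to\infty$ and using $G_n\to a$ on $F$ we obtain $a(s)\le\liminf_n G_n(t)\le\limsup_n G_n(t)\le a(u)$. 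Since also $a(s)\le a(t)\le a(u)$ and $\eps$ is arbitrary, $G_n(t)\to a(t)$, and therefore $\langle x\rangle^{\pi_n}_t\to a(t)$ by the previous paragraph; this establishes pointwise convergence on all of $[0,1]$.

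For the final assertion, suppose $a$ is continuous on $[0,1]$. The $G_n$ are c\`adl\`ag and increasing and converge pointwise to the continuous function $a$, so Scholium~\ref{Polya} (P\'olya) upgrades this to uniform convergence $G_n\to a$ on $[0,1]$; combined with $\sup_{t}|\langle x\rangle^{\pi_n}_t-G_n(t)|\le 2\,O(x,\pi_n)^2\to 0$ this gives that $\langle x\rangle^{\pi_n}_{\cdot}\to a_{\cdot}$ uniformly as well. I do not expect a genuine obstacle here; the only step requiring care is the bookkeeping behind the identity $\langle x\rangle^{\pi_n}_t-G_n(t)=(x_t-x_{t_j})^2-(x_{t_{j+1}}-x_{t_j})^2$ — in particular the treatment of the right endpoints $t_{j+1}$ of the half-open cells $[t_j,t_{j+1})$, which is exactly what forces the appeal to continuity of $x$ — everything else being soft.
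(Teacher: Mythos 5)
Your proof is correct and follows essentially the same route as the paper's: you replace $\langle x\rangle^{\pi_n}$ by the increasing cumulative function of $\mu_n$ (your $G_n$, the paper's $a^n$), note that the two differ uniformly by a quantity controlled by $O(x,\pi_n)$, sandwich at continuity points of $a$ using density of $F$ and monotonicity, and invoke Scholium~\ref{Polya} for uniformity. Your explicit bookkeeping of the boundary term and the bound $2\,O(x,\pi_n)^2$ is just a more careful spelling-out of the paper's one-line comparison, not a different argument.
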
 
\begin{proof}
Although $\langle x\rangle^{\pi_n}$ is not necessarily an increasing function, it differs from the  increasing function $a^n(t):=\mu_n([0,t])$ (where $\mu_n$ is as in \eqref{MeasMu})  by at most $O(x,\pi_n)$, and so it is enough to prove the statement with $a^n$ replacing $\langle x\rangle^{\pi_n}$.
By hyphothesis  $\langle x\rangle^{\pi_n}_{t} \to a(t)$ for all $t$ at which $a$ is not continuous.
If $a$ is continuous at $t$ then for each $\e>0$ there exist $ s_1,s_2 \in F$ s.t. $s_1<t<s_2$ and $a(s_2)-a(s_1)<\e$, and so $a(t)-\e\leq a(s_1)=\lim_{n }\langle x\rangle^{\pi_n}_{s_1} \leq \liminf_{n } \langle x\rangle^{\pi_n}_{t}$ and analogously $\limsup_{n } \langle x\rangle^{\pi_n}_{t} \leq a(t)+\e$. Letting $\epsilon \downarrow 0$ we see that $ \lim_{n } \langle x\rangle^{\pi_n}_{t}$ exists and equals $a(t)$. Scholium \ref{Polya} concludes the proof.
\end{proof}

\begin{proof}[Proof of Theorem \ref{ArbQvFn}]
Note that, as observed already by  Freedman \cite{fre83},   given $k\in \bN\setminus \{0\}$ and $\pi$ we can build a partition $\pi' \supseteq \pi$ such that $\langle x\rangle^{\pi'}_t=\langle x\rangle^{\pi}_t/k$; indeed it is enough to do so on each subinterval of $\pi$, so we can assume that $\pi=\{c,d\}$. If $x(c)=x(d)$ take $\pi':=\{c,d\}$; if  $x(c)\neq x(d)$ we define $\pi'=(t_i)_{i=0}^k$ setting $t_0:=c$ and
 \[ t_i:=\min \{t\in [c,d]: x(t)=x(c)+(x(d)-x(c)) i/k \}  \quad  \text{  for  }  i= 1,\ldots, k \, ; \] 
 indeed $\langle x\rangle^{\pi'}_t=\sum_{i=0}^{k-1} ((x(d)-x(c))/k)^2 =\langle x\rangle^{\pi}_t/k$ holds. We will denote by $F(\pi,k)$ the partition $\pi'$ built with the above construction starting from $\pi$ and $k$.
 
We now fix $t$ and prove the existence of some $\pi'$ such that $| \langle x\rangle^{\pi'}_t -a(t)| \leq 1/2^n$; to do so we take $i\in \bN$ such that  $a(t)\in [i/2^n, (i+1)/2^n]$ and show that there exists $\pi$ such that $ \langle x\rangle^{\pi}_t \in [i,i+1]$ and then 
 take $\pi'=F(\pi,2^n)$; note that we automatically know such $\pi$ exists when $i=0$ (by taking $\pi=F(\tilde{\pi},k)$ where $\tilde{\pi}$ is an arbitrary partition and $k$ a big enough integer). If $i\geq 1$ since the quadratic variation over $[0,t]$ equals\footnote{This requires that $\pi$ contains each endpoint  of the subintervals; this does no harm, as it only means the $\pi$ we have to build must contain these points.} the sum of the quadratic variations over the subintervals $[mt/i,(m+1)t/i]$, $m=0,\ldots, i-1$, by time translation it is enough to prove that for any $s>0$ there exist $\tilde{\pi}'$ such that $\langle x\rangle^{\tilde{\pi}'}_s \subseteq [1,1+1/i]$.
As we assumed above,  there exist a partition $\hat{\pi}$ such that $\langle x\rangle^{\hat{\pi}}_s$ is arbitrarily large. Now using Freedman's construction  with  $k$ equal to be the integer part of $\langle x\rangle^{\hat{\pi}}_s$ we obtain $\tilde{\pi}'=F(\hat{\pi},k)$ such that $\langle x\rangle^{\pi'}_s\in [1,1+1/k] \subseteq  [1,1+1/i]$, concluding our proof of  the existence of some $\pi'$ such that $| \langle x\rangle^{\pi'}_t -a(t)| \leq 1/2^n$.  

Now, given any $\tilde{\pi}_n$, by applying the above reasoning to the increments of $a$ on each subinterval of $\tilde{\pi}_n$, we can find a $\pi_n\supseteq \tilde{\pi}_n$ such that $| \langle x\rangle^{\pi_n}_t -a(t)| \leq 1/2^n$ simultaneously for all $t \in \tilde{\pi}_n$.
If we  define $(\tilde{\pi}_n, \pi_n)$ by induction setting $\tilde{\pi}_0:=\{0,1\}=:\pi_0$ and taking  $\tilde{\pi}_n$ to be the union of  $D_n \cup \bar{\pi}_n$ with the times when $a$ has a jump of size bigger than $1/n$ and with $\cup_{k<n} \pi_k$, and then building $\pi_n$ from $\tilde{\pi}_n$ as explained at the beginning of this paragraph, we obtain a refining  $\pi_n$ which contains $D_n \cup \bar{\pi}_n$ and such that $\langle x\rangle^{\pi_n}_t \to a(t)$ holds  for any dyadic time and any time of jump of $a$, and thus holds simultaneously for all $t$ by Lemma \ref{ConvAtJumps}.

\end{proof} 

One can apply Theorem \ref{ArbQvFn} to the paths of Brownian motion; indeed, as L\'evy first remarked, for a.e. $\omega$ there exist partitions $ \pi_n=\pi_n(\omega)$ s.t. $\lim_n \langle B(\omega)\rangle^{\pi_n}_{(0,1]}=\8$. However, to obtain an interesting result one needs to show that the partitions can be chosen \emph{in a measurable way}. 
This requires first to correspondingly strengthen Levy's result in the following way.

\begin{lemma}
\label{StrongLevy}
If $0\leq c< d$ there exist \emph{random} partitions $\pi^n$ of $[c,d]$ such that 
\[  \langle B\rangle^{\pi^n}_{(c,d]} \to \8 \text{ a.s. as } n\to \infty .\]
\end{lemma}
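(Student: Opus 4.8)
The plan is to build the random partitions by chopping $[c,d]$ into many small deterministic subintervals and, on each of them, inserting the stopping times at which $B$ hits a fine grid, in the spirit of Freedman's construction recalled in the proof of Theorem \ref{ArbQvFn}. Concretely, for $m\in\bN$ I would subdivide $[c,d]$ into $m$ equal pieces $[s_i,s_{i+1}]$, $s_i=c+i(d-c)/m$, and on each piece run the Lebesgue-type stopping times $\tau^{i}_0=s_i$, $\tau^{i}_{k+1}=\inf\{t>\tau^{i}_k: B_t\in \delta_m\bZ,\ B_t\ne B_{\tau^i_k}\}\wedge s_{i+1}$ for a mesh $\delta_m\downarrow 0$ to be chosen, stopping when $s_{i+1}$ is reached. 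These are genuine stopping times (hitting times of closed sets by a continuous adapted process), $\tau^i_0=s_i$, they increase to a value $\le s_{i+1}$, and $\lim_k\tau^i_k=s_{i+1}$ since $B$ is continuous; concatenating over $i$ (and discarding consecutive repetitions) gives a random partition $\pi^m$ of $[c,d]$. The point of the grid $\delta_m\bZ$ is that between consecutive times the increment of $B$ has absolute value exactly $\delta_m$ unless we have just hit $s_{i+1}$, so
\[
\langle B\rangle^{\pi^m}_{(c,d]}\ \ge\ \sum_{i=0}^{m-1}\big(N^i_m-1\big)\,\delta_m^2,
\qquad N^i_m:=\#\{k:\tau^i_k<s_{i+1}\},
\]
the $-1$ accounting for the last, possibly short, increment on each subinterval.

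Next I would lower-bound $N^i_m$. On $[s_i,s_{i+1}]$ the number of $\delta_m$-grid crossings of $B$ is at least $\lfloor \mathrm{osc}_{[s_i,s_{i+1}]}B/\delta_m\rfloor$, where $\mathrm{osc}$ denotes the oscillation (range) of $B$ on that interval. Hence
\[
\langle B\rangle^{\pi^m}_{(c,d]}\ \ge\ \delta_m^2\sum_{i=0}^{m-1}\Big(\big\lfloor \tfrac{1}{\delta_m}\,\mathrm{osc}_{[s_i,s_{i+1}]}B\big\rfloor-2\Big)\ \ge\ \delta_m\sum_{i=0}^{m-1}\mathrm{osc}_{[s_i,s_{i+1}]}B\ -\ 3m\delta_m^2-\delta_m(d-c).
\]
Now the key probabilistic input: by the known fact (Taylor \cite{tay72}, or the $\sup_\pi\langle B\rangle_1=\infty$ statement of L\'evy--Freedman already cited) that Brownian paths have infinite $1$-variation on every interval, $\sum_{i=0}^{m-1}\mathrm{osc}_{[s_i,s_{i+1}]}B\to\infty$ a.s. as $m\to\infty$; more usefully, it grows like $m^{1/2}$ up to logarithmic factors. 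I only need a crude, fully elementary bound: $\mathrm{osc}_{[s_i,s_{i+1}]}B\ge |B_{s_{i+1}}-B_{s_i}|$, the summands $|B_{s_{i+1}}-B_{s_i}|$ are independent with $\bE|B_{s_{i+1}}-B_{s_i}|=\sqrt{2(d-c)/(\pi m)}$, so $S_m:=\sum_i|B_{s_{i+1}}-B_{s_i}|$ has mean of order $\sqrt{m}$ and variance $O(1)$, whence by Chebyshev $S_m/\sqrt m\to \sqrt{2(d-c)/\pi}$ in probability, and along a subsequence a.s.; a Borel--Cantelli argument (the increments being i.i.d.\ with finite variance after centring) in fact gives $S_m\ge \tfrac12\sqrt{m\cdot 2(d-c)/\pi}$ for all large $m$ a.s. Therefore, choosing for instance $\delta_m:=m^{-1/3}$, we get
\[
\langle B\rangle^{\pi^m}_{(c,d]}\ \ge\ \delta_m S_m-3m\delta_m^2-\delta_m(d-c)\ \gtrsim\ m^{-1/3}\cdot c_0\sqrt{m}-3m^{1/3}-o(1)\ \to\ \infty\quad\text{a.s.}
\]
Re-indexing $\pi^n:=\pi^{m_n}$ for a rapidly increasing $m_n$ (to make ``for all large $m$ a.s.'' into ``for all $n$ a.s.'' after discarding a null set) gives the statement.

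The main obstacle is not the construction but making the divergence \emph{almost sure} rather than merely in probability, while keeping everything measurable: one must be careful that the stopping times $\tau^i_k$, and hence $\pi^n$, depend measurably on $\omega$ (automatic, as they are hitting times) and that the a.s.\ lower bound on $\sum_i\mathrm{osc}_{[s_i,s_{i+1}]}B$ holds on a single full-measure set valid simultaneously for the chosen sequence $m_n$. The cleanest route is to avoid fine path regularity altogether and use only the i.i.d.\ increments $|B_{s_{i+1}}-B_{s_i}|$: a second-moment computation plus Borel--Cantelli over the subsequence $m_n=n^6$ (say) yields $S_{m_n}\ge \tfrac12\bE S_{m_n}$ for all but finitely many $n$ a.s., which is all that is needed. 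One should also double check the bookkeeping of the ``$-1$'' per subinterval and the truncation at $s_{i+1}$, but these contribute only the lower-order terms $3m\delta_m^2+\delta_m(d-c)=o(\delta_m S_m)$ and cause no trouble. Finally, although the lemma as stated asks only for random partitions, the same construction manifestly produces \emph{optional} partitions, which is what is used downstream.
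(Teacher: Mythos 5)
There is a fatal structural problem: your partitions are built from hitting times of the grid $\delta_m\bZ$ started on deterministic subintervals, so they are \emph{optional} partitions, and along optional partitions the quadratic variation of Brownian motion cannot blow up --- by the argument of Proposition \ref{ExistsQV} it converges (u.c.p., hence a.s.\ along subsequences) to the true bracket $d-c$. Indeed this is precisely the dichotomy the paper emphasizes: divergence of $\langle B\rangle^{\pi^n}$ is only achievable with genuinely anticipative random times, and the paper's proof of Lemma \ref{StrongLevy} selects them by looking at the whole path: using the law of the iterated logarithm, for a.e.\ $\o$ one finds (via a Vitali cover and, to keep everything measurable, the section theorem) finitely many disjoint intervals $[\mathfrak{t}_i,\mathfrak{t}_i+\mathfrak{h}_i]$ covering most of $[c,d]$ on which $\psi(|B_{\mathfrak{t}_i+\mathfrak{h}_i}-B_{\mathfrak{t}_i}|)\geq(1-\eps)\mathfrak{h}_i$ with $\psi(h)=h^2/(2\log\log(1/h))$; since $\psi(h)/h^2\to 0$, summing the squared increments over intervals of length at most $2^{-n}$ forces $\langle B\rangle^{\pi^n}_{(c,d]}\to\8$. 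Your closing remark that the construction ``manifestly produces optional partitions, which is what is used downstream'' is exactly backwards: optionality is what must be given up.

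The failure also shows up in your own estimates. Your lower bound is at best $\delta_m S_m - 3m\delta_m^2$ with $S_m\sim c_0\sqrt{m}$ a.s.; with $\delta_m=m^{-1/3}$ the main term is of order $m^{1/6}$ while the subtracted error $3m\delta_m^2=3m^{1/3}$ dominates, so the displayed bound tends to $-\8$, not $+\8$. No choice of $\delta_m$ repairs this: the ratio of main term to error is of order $1/(\delta_m\sqrt{m})$, so the error is negligible only when $\delta_m\ll m^{-1/2}$, in which case the main term $\delta_m S_m$ stays bounded. This is not an accident of the crude bound $\mathrm{osc}\geq|B_{s_{i+1}}-B_{s_i}|$ (using the true oscillations, $\sum_i\mathrm{osc}_{[s_i,s_{i+1}]}B\sim C\sqrt{m}$ by the law of large numbers, gives the same order): counting each grid level essentially once per subinterval captures only the ``monotone sweep'' of the range, i.e.\ a first-variation quantity of order $\sqrt m$, and multiplying by $\delta_m\lesssim m^{-1/2}$ can never diverge --- consistently with the fact that the honest count of grid crossings makes the quadratic variation converge to $d-c$ rather than to $\8$.
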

 
 To prove Lemma \ref{StrongLevy}, one needs to revisit the proof of \cite[Theorem 1 and its Corollary in Section 4]{tay72} and delve into the proof of the existence of a Vitali subcover to show how one can choose a measurable one (on a set of probability arbitrarily close to $1$); although this essentially follows from an application of the section theorem, the proof is involved  
and we relegate it to the appendix.

Having established Lemma \ref{StrongLevy},  one can follow the logic of the proof of Theorem \ref{ArbQvFn} and with laborious but entirely elementary proofs\footnote{The proofs  rest entirely on Borel Cantelli's lemma and on the fact that, given a c{\`a}dl{\`a}g adapted process $D$, its jumps of size bigger than a given constant are stopping times, see \cite[Theorem 3.1]{Sio13DM}.} 
one can check measurability to obtain a similar result for the paths of Brownian motion, which we state below. 
To slightly generalize Theorem \ref{ArbQvFn} 
to include the case of a positive but potentially non-finite process $A$, we identify  $[0,1]$ (with the Euclidean topology) with $[0,\infty]$ using the  bijection $1-\exp(-x)$ (where $e^{-\8}:=0$), and thus  we endow  $[0,\8]$ with the distance $d(a,b):=|e^{-a} - e^{-b}|$,   which makes it homeomorphic to $[0,1]$  and for all $M<\8$ satisfies $d(a,b)\leq |a-b|\leq C d(a,b)$ for all $a,b\in [0, M]$ and some $C=C(M)$.
 Of course, if $A_{\cdot}(\o)$ is finite valued the convergence  under the Euclidean distance $|a-b|$ is equivalent to the  convergence under the distance $d(a,b)$.
In all that follows, if $\pi=(\tau_n)_n$ is a random partition we  denote by $\pi(\o)$ the sequence $(\t_n(\o))_{n\in \bN}$. 
Given sets $C,D$ which depend on $\omega$,
 we write that $C\subseteq D$ if $C(\omega) \subseteq D(\omega)$ for a.e. $\omega$, and in particular we say that a sequence of random partition $(\pi_n)_{n\in\bN}$ is \emph{refining} if $ \pi_n \subseteq \pi_{n+1}$ for all $n$.

 \begin{theorem}
\label{ArbQV}
Let $B$ be a Brownian motion and $A$ a jointly-measurable\footnote{Of course any \emph{c{\`a}dl{\`a}g} increasing process $A$ is jointly-measurable. However this is not true for general increasing processes: for example if $A:=Y \1_{\{\t\}} + \1_{(\t,\8)}$ where $\t$ is an exponentially distributed random time and $Y$ is a non-measurable function with values in $(0,1)$  then $A$ is a process with respect to the completed sigma-algebra ($A_t$ is measurable since $A_t=0$ a.e.), yet $A$ is clearly not jointly-measurable.}  increasing process with values in $[0,\8]$ and such that $A_0=0$. Then, there exist refining random partitions $\pi_n$ of $[0,\8)$ such that $ \bN 2^{-n} \subseteq \pi_n$ and  for all $ \o$ outside a null set
\begin{align} 
\label{<B>ConvA}
 \langle B(\omega) \rangle^{\pi_n(\omega)}_{t} \to A_{t}(\omega) \quad   \text{ for all } t\in [0,\8) \text{  as   }  \,  n\to \8 ;
\end{align} 
if $0\leq c<d<\infty$ the convergence in \eqref{<B>ConvA} is  uniform on $t\in [c,d]$ for the $\o$'s at which $(A_{t}(\o))_{t\in [c,d]}$ is continuous. Moreover, given arbitrary random partitions $(\bar{\pi}_n)_n$ one can choose $\pi_n=(\tau_n^i)_i$  such that $\bar{\pi}_n  \subseteq \pi_n$ for all $n$  and, if $A$ is adapted, $\tau_n^i+ 2^{-n}$ is a stopping time for each $i,n$.
\end{theorem}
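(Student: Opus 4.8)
The plan is to deduce Theorem \ref{ArbQV} from the one-path Theorem \ref{ArbQvFn} and the measurable version of L\'evy's result Lemma \ref{StrongLevy}, adding the measurability bookkeeping that Theorem \ref{ArbQvFn} deliberately omits. First I would reduce to a finite time horizon: since the partitions we seek are refining, we may work on $[0,M]$ for each integer $M$ and patch together using that $\langle B\rangle^{\pi_n}_{(0,\cdot]}$ is additive over disjoint intervals (cf. \eqref{QVadditive}); the homeomorphism $1-\exp(-x)$ between $[0,\infty]$ and $[0,1]$ and the distance $d(a,b)=|e^{-a}-e^{-b}|$ let us make sense of convergence to a possibly infinite $A$, and since $d(a,b)\leq|a-b|\leq C(M)d(a,b)$ on $[0,M]$, convergence in $d$ and in the Euclidean metric agree on finite-valued stretches, so it suffices to prove \eqref{<B>ConvA} with respect to $d$.

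Next I would run the inductive construction from the proof of Theorem \ref{ArbQvFn}, but \emph{pathwise in a measurable way}. The core building blocks are: (i) Freedman's operation $F(\pi,k)$, which divides the quadratic variation on each subinterval by $k$ — here, when $\pi$ is made of stopping times, the new times $t_i=\min\{t:x(t)=x(c)+(x(d)-x(c))i/k\}$ are first hitting times of continuous adapted processes (after the randomly located endpoints $c,d$), hence stopping times, and they depend measurably on $\omega$; (ii) the blow-up step, which uses Lemma \ref{StrongLevy} to produce, on each subinterval $[c,d]$ with $c,d$ random times, random partitions whose quadratic variation tends to $\infty$, and then applies $F(\cdot,k)$ with $k$ the integer part of that (random) quadratic variation to land in $[1,1+1/i]$; (iii) inserting the dyadic times $\bN 2^{-n}$, the given $\bar\pi_n$, and the "large jump times" of $A$. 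The subtle point for (iii) is that, when $A$ is adapted and c\`adl\`ag, its jumps exceeding a fixed size are stopping times (the cited \cite[Theorem 3.1]{Sio13DM}), so adjoining them keeps the partition optional up to the shift: one works on $[0,\infty)$ but records $\tau_n^i+2^{-n}$ as the stopping time, exactly as in the statement; for a merely jointly-measurable $A$ one still gets random (not optional) times, which is all that is claimed in that case. Lemma \ref{ConvAtJumps} then upgrades convergence at dyadics and at jump times of $A$ to convergence at all $t$, and to uniform convergence on $[c,d]$ wherever $A(\omega)$ is continuous; the oscillation hypothesis $O(B(\omega),\pi_n)\to 0$ needed there holds because each refinement step only subdivides, and hitting-time meshes of a continuous function shrink.

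Two measurability issues need care and will be the main obstacle. The first is that Theorem \ref{ArbQvFn} is stated for a fixed continuous function $x$ with the property that every subinterval supports partitions with infinite quadratic variation; to apply it to $x=B(\omega)$ uniformly in $\omega$ we must know this holds off a single null set and, more importantly, that \emph{the partitions realizing it can be selected measurably} — this is precisely the content of Lemma \ref{StrongLevy}, whose own proof (relegated to the appendix) rests on a measurable selection inside the Vitali covering argument via the section theorem. The second is the induction itself: at stage $n$ the number of subintervals of $\tilde\pi_n$, and on each one the integer $k$ used in Freedman's construction, are random, so one must check that "for a.e. $\omega$, simultaneously for all $t\in\tilde\pi_n(\omega)$, $|\langle B(\omega)\rangle^{\pi_n(\omega)}_t-A_t(\omega)|\leq 2^{-n}$" is a measurable event of full probability and that the diagonal sequence $\pi_n$ is genuinely refining and genuinely made of random (resp. optional-up-to-shift) times. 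This is exactly the "laborious but entirely elementary" bookkeeping the paper flags, driven by Borel--Cantelli and the stopping-time property of large jumps; I would organize it by first doing the construction for a single $[0,M]$ with $A$ c\`adl\`ag and adapted, verifying every newly introduced time is a stopping time (after a shift), then removing the c\`adl\`ag/adapted hypothesis by a monotone-class / approximation argument to reach general jointly-measurable $A$, at the cost of the stopping-time conclusion, and finally patching the horizons $M\to\infty$.
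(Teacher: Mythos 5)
Your overall strategy is the one the paper follows (measurabilize the proof of Theorem \ref{ArbQvFn} via Lemma \ref{StrongLevy}, insert dyadics, the $\bar\pi_n$ and the large jump times of $A$, control errors by Borel--Cantelli, and finish with Lemma \ref{ConvAtJumps}), but two of your concrete steps do not work as described. First, the reduction of general jointly-measurable $A$ to the c\`adl\`ag adapted case ``by a monotone-class / approximation argument'' is not a viable route: the assertion to be proved is the existence of partitions realizing a prescribed limit simultaneously at \emph{all} $t$, which is not a property closed under monotone limits in $A$, and replacing $A$ by a right-continuous regularization changes its value at jump times (where $A_t$ may lie strictly between the one-sided limits), so the approximated statement would force convergence to the wrong value exactly at the times that Lemma \ref{ConvAtJumps} requires you to control. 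The paper never approximates $A$: the c\`adl\`ag process $D_t=\lim_{s\in\bQ,\,s\downarrow t}(1-e^{-A_s})$ of \eqref{PiJumps} is used only to locate the jump times of $A$ measurably (its jumps of size $>1/n$ are random times, with no adaptedness assumption), while the actual values are targeted directly: Lemma \ref{ConToY} produces, on any random interval, partitions whose quadratic variation converges a.s.\ to an \emph{arbitrary} $[0,\8]$-valued random variable, so general $A$ is handled in one pass and adaptedness enters only for the stopping-time claim.

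Second, your justification of that stopping-time claim is incorrect: in Freedman's operation the new times are hitting times of the levels $B_c+(B_d-B_c)i/k$, which depend on the future value $B_d$, and the choice of partition on a subinterval $[\s_n^i,\s_n^{i+1}]$ also depends on the increment $A_{\s_n^{i+1}}-A_{\s_n^i}$; hence these times are \emph{not} stopping times, and the hitting-time argument you give fails. The correct mechanism --- and the only reason the theorem asserts optionality after a shift --- is that $\bN 2^{-n}\subseteq\tilde\pi_n$ forces every subinterval to have length at most $2^{-n}$, so each constructed time looks ahead by at most $2^{-n}$ (into both $B$ and, when $A$ is adapted, $A$), whence $\tau_n^i+2^{-n}$ is a stopping time; the large jumps of an adapted c\`adl\`ag process are stopping times with no shift at all, so they are not the source of the shift as your sketch suggests. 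Relatedly, Lemma \ref{StrongLevy} is stated for deterministic $c<d$; to run the induction you need divergence on subintervals with \emph{random} endpoints, and you can only place $\langle B\rangle_\pi$ in a prescribed window with high probability rather than surely, so your plan still needs the quantitative intermediate steps that the paper supplies as Lemmas \ref{InfQV}, \ref{B/Z} and \ref{ConToY} (together with the metric $d$ and its subadditivity, Lemma \ref{Triang}, to sum errors over randomly many subintervals); this part is fillable, but it is where the real work of the appendix lives.
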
 
It is insightful to contrast this result with the well known fact that, if $(\pi_n)_n$ is a sequence of \emph{optional} partitions such that $O_{T }(B,\pi_n)\to 0$ a.e.,  $\langle B(\omega) \rangle^{\pi_n(\omega)}_t$ converges to $t$ uniformly on compacts in probability  (see the proof of Proposition \ref{ExistsQV}). The random times  $(\tau_n^i)_i$ making up $\pi_n$  do not need to look far into the future to break the convergence of $\langle B(\omega) \rangle^{\pi_n(\omega)}_t$ to $t$: as the theorem states,  one could take the $(\tau_n^i)_i$ to look only an arbitrarily small amount of time into the future.  Notice that the random times making up $\pi_n$ are bounded (since   $(\tau_n^i)_i=\pi_n \supseteq \bN 2^{-n}$ implies $\tau_n^i\leq i/2^n$).

Although we stated Theorem \ref{ArbQV} only for Brownian motion, it holds
for any continuous stochastic process with an oscillatory behavior wild enough to have infinite $2$-variation on any interval, in the sense that  for every $0\leq c<d<\infty$ there exist random partitions $\pi_n$ of $[c,d]$ along which the quadratic variation of $B$ converges a.s.\ to infinity.
 In particular our proof of Lemma \ref{StrongLevy} shows that Theorem  \ref{ArbQV} applies whenever $B$ is a continuous adapted\footnote{This is only used to obtain that $\tau_n^i+2^{-n}$ are stopping times; otherwise $B$ measurable is enough.} process for which
 there exists some continuous strictly increasing function $\psi$ such that $\psi(h)/h^2 \to 0$ as $h\downarrow 0$ and 
 \begin{align}
\label{BigIncr}
\text{ for  every $ t\geq 0$ }\quad   \limsup_{h\downarrow 0 } \frac{\psi(|B_{t+h} -B_{t}|)}{h} \geq 1  \quad  \text{ a.s. }  .
 \end{align}

 \appendix
\section{Proof of Theorem \ref{ArbQV}}
\label{appendixA}
 In order to deal with the technicalities involved in tracking the dependence in $\o$, we need to introduce a number of new definitions; these boil down to asking that, when evaluated at each $\o$, random partitions are (deterministic) partitions and the operations defined on them correspond to the analogous operations for (deterministic) partitions.
 Given two random times $\s\leq \t$, with slight abuse of notation we denote by $\st$  the set $ \{(\omega,t)\in \Omega\times [0,\8): \s(\o)\leq t \leq \t(\o)\} $. 
Given random times $ \s\leq \t$ we will  say that $\pi=(\t_k)_{k\in \bN}$ is \emph{a random partition of $\st$} if    $\tau_k$ are random times such that $ \tau_0=\s, \tau_k\leq\tau_{k+1}\leq \tau$ with $\tau_k<\tau_{k+1}$ on $\{\tau_{k+1}<\t\}$,    and  for a.e. $\o$ there exists some $k=k(\o)$ such that  $\tau_{k(\o)}(\o)=\t(\o)$; we then  denote by $K(\pi)$ the (\emph{finite}) random variable
\begin{align}
\label{Kpi}
K(\pi):=\min\{k\in \bN: \tau_{k}=\t \}   \,  \text{ if   }  \,  \pi=(\t_k)_{k\in \bN} .
\end{align} 
We denote by $\cP\st$ the set of random partitions of $\st$, with $\{\s\}$ the constant partition (i.e. $\{\s\}=(\s_i)_i$ with $\s_i=\s$ for all $i\in \bN$), and with 
$\cP[0,\8)$ the set of random partitions of $[0,\8)$ defined shortly before 
Definition \ref{def-qvp} (one could more generally define the random partitions of $[\s,\t)$; notice that for $(\tau_n)_n$ to be in $\cP[\s,\t)$ it is not required that $\tau_n=\t$ for big enough $n$, unlike $\cP[\s,\t]$, so the set $\cup_{n}\{\tau_n (\omega)\}$ does not need to be finite).
We now introduce several operations that one can perform on random partitions.
 Given random times $\alpha\leq \s\leq \t\leq \beta $ and  $(\t_k)_k=\pi\in \cP[\alpha, \beta]$ we define $\pi\cap \st$ to be the random partition 
 $((\t_k\wedge \t ) \vee \s)_k$ of $\st$; notice that $(\pi\cap \st)(\o)$ equals $(\pi(\o)\cap [\s(\o),\t(\o)])\cup \{ \s(\o), \t(\o)\}$. 
Given a measurable partition $(A_n)_{n\in \bN}$ of $\Omega$ and for each $n$ a random quantity $\t_n$ defined on $A_n$, one can  define on  $\Omega$  a random quantity  $\t$ by setting $\t:=\t_n$ on $A_n$; we will sometimes use this construction to define random times (and thus random partitions).
Given random partitions  $\pi=(\t_n)_n$  of $\st$ and $\tilde{\pi}=(\tilde{\t}_n)_n$ of $[\tilde{\s},\tilde{\t}]$, we define by induction the random 
 partition $\pi\cup \tilde{\pi}=(\rho_n)_n$ of $[\s\wedge \tilde{\s}, \t\vee \tilde{\t}]$ as follows: $\rho_0:=\s\wedge \tilde{\s}$, if $\rho_n(\o)=\t(\o)\vee \tilde{\t}(\o)$ then $\rho_{n+1}(\omega):=\t(\o)\vee \tilde{\t}(\o)$, and  if $\rho_n(\o)<\t(\o)\vee \tilde{\t}(\o)$ then 
 \begin{align*}
\label{}
\rho_{n+1}(\omega):=\min\{t > \rho_n(\omega) : t\in \pi(\omega) \cup \tilde{\pi}(\omega)\cup \{\t(\o)\vee \tilde{\t}(\o)\} \}   \, ;
\end{align*} 
  notice that the $\rho_n$ are indeed random times, as it follows from the following representation, where given a random time $\t$ and a measurable set $A$ we denote by $\t^A$ the random time $\t^A:=\t$ on $ A$ and $\t^A:=\8$ on $\Omega \setminus A$: \[ \rho_{n+1}=\min_k  (   \t_k^{\{\t_k >\rho_n \}} \wedge \tilde{\t}_k^{\{\tilde{\t}_k> \rho_n \}} \wedge (\t \vee \tilde{\t})) . \]
Given  $\pi^i \in \cP[\s_i,\t_i]$ for $i\leq k$  we can analogously  define $\cup_{i=0}^k \pi^i \in \cP[\min_i\s_i,\max_i \t_i]$; in particular when $\pi^i=\{ \s_i\}$ for each $i$ this defines 
$\cup_{i=0}^k \{ \s_i\}=(\rho_n)_{n\in \bN}$    as an element of $ \cP[\min_i\s^i,\max_i\s^i]$ (the point being that the $(\rho_n)_n$ are ordered whereas the $(\s_i)_i$ in general are not).
Notice that we cannot reasonably define the random partition $\cup_{i\in \bN} \pi^i$ for general $(\pi^i)_{i\in \bN}$;  indeed in general the set $\cup_{i\in \bN} \pi^i(\o)$ is not  finite, so there is no random partition $\pi'$ such that $\pi'(\o)=\cup_{i\in \bN} \pi^i(\o)$ for a.e. $\o$. 
However, if $\pi=(\t_i)_i \in \cP\st$ and $\pi^i \in \cP[\t_i,\t_{i+1}]$ for each $i\in \bN$, we can define $\cup_{i\in \bN} \pi^i=(\rho_n)_n\in \cP\st$ by induction like above: 
$\rho_0:=\s$, if $\rho_n(\o)=\t(\o)$ then $\rho_{n+1}(\omega):=\t(\o)$, and  if $\rho_n(\o)<\t(\o)$ then 
 \begin{align}
\label{CountUn}
\rho_{n+1}(\omega):=\min\{t > \rho_n(\omega) : t\in \cup_{i\in \bN} \pi^i(\o) \cup \{\t(\o)\} \}   \, ;
\end{align} 
 since for fixed $\o$ the set $\pi(\o)$ is finite, also $\cup_{i\in \bN} \pi^i(\o)$ is finite; thus the minimum in \eqref{CountUn} exists, and $\s_K=\t$ for some $K=K(\omega)$, so $\cup_{i\in \bN} \pi_i\in \cP\st$. Analogously  if $\pi=(\t_i)_i \in \cP[0,\8)$ and $\pi^i \in \cP[\t_i,\t_{i+1}]$ we can define $\cup_{i\in \bN} \pi^i=(\rho_n)_n\in \cP[0,\8)$ such that $\rho_n\leq \t_n$ for all $n$ by setting 
$\rho_0:=0$ and
 \begin{align*}
\label{}
\rho_{n+1}(\omega):=\min\{t > \rho_n(\omega) : t\in \cup_{i\in \bN} \pi^i(\o)  \}   \, ,
\end{align*}  
 where the minimum exists and $\rho_n\uparrow\8$ since $\cup_{i\in \bN} \pi^i(\o)$ is finite on compacts.

  Recall the definitions given in \eqref{QVst}--\eqref{QVadditive}. Note that given finite random times $\alpha\leq \s\leq \t\leq \beta $ and  $\pi\in \cP[\alpha, \beta]$  the random variable $\langle B\rangle^{\pi}_{(\s,\t]}$ is defined  by \eqref{QVst} path by path, i.e., $\langle B\rangle^{\pi}_{(\s,\t]}(\o):=\langle B(\o)\rangle^{\pi(\o)}_{(\s(\o),\t(\o)]}$. 
Thanks to the next simple lemma, in the rest of this section we only need to consider $[0,1]$-valued random times; in particular $\pi(\o)$ will be a finite set for a.e. $\o$ for any random partition $\pi$ and $\langle B\rangle^{\pi}_{(\s,\t]}$ will always be well  defined.

\begin{scholium}
\label{Time01}
It is enough to prove Theorem \ref{ArbQV} on  $\{t\in [0,1]\}$.
\end{scholium} 
\begin{proof}
For $k\in \bN$ applying Theorem \ref{ArbQV} on the time interval $[0,1]$ to the Brownian motion $B^k_t:=B_{t+k}-B_k$, the increasing process $A^k_t:=A_{t+k}-A_k$ and  $\bar{\pi}^k_n:=(\bar{\pi}_n\cap [k,k+1])-k$ produces a refining sequence $(\pi_n^k)_n\subseteq \cP[0,1]$ such that $\pi_n^k\supseteq \bar{\pi}^k_n $ for all $n$ and 
$\langle B \rangle^{\pi_n}_t$ converges a.s. to $A_t$   uniformly on $t\in [0,1]$.
 Since $(k)_{k\in \bN}$ is a `random' partition of $[0,\8)$ and $\pi_n^k +k \in \cP[k,k+1]$, we can define  $\pi_n:=\cup_{k\in \bN} \pi_n^k +k $, which is a random partition of $[0,\8)$ that trivially gives Theorem \ref{ArbQV}  on the time interval $[0,\8)$. 
\end{proof}

In the proof of the next lemma we will use the following notation: given a subset $E$ of $(0,1)\times \Omega$ and $\omega \in \Omega$, we set $E(\omega):=\{ t: (t,\omega)\in E\}$, $E(t):=\{ \omega: (t,\omega)\in E\}$ and $\Pi_{\Omega}(E):=\{\omega: (t,\omega)\in E \text{ for some } t\in (0,1) \}$.
With $\cB_1 \times \cF$ we will denote the product sigma algebra of the Borel sets  $\cB_1$ of $(0,1)$ with the underlying sigma algebra $\cF$ on $\Omega$; whenever a function of $(t,\omega)$ (or a subsets of $(0,1)\times \Omega$) is $\cB_1 \times \cF$-measurable, we will simply say that  is measurable.
We will assume that $\cF$ contains all null sets; this is without loss of generality because of \cite[Chapter 1, Lemma 1.19]{JacodShir:02}.
The Lebesgue measure on $(0,1)$ will be denoted with $\cL^1$, and the dyadics (resp. the dyadics of order $n$) in $(0,1)$ with $D$ (resp. $D_n$), i.e $D_n:= (0,1) \cap \bN 2^{-n}$ and $D=\cup_{n\geq 0} D_n$. As usual the $\inf$ (resp. $\sup$) of the empty set is defined to be $\infty$ (resp. $-\infty$).

\begin{proof}[Proof of Lemma \ref{StrongLevy}]  \textit{ Step 1.} 

Notice that Brownian motion satisfies  \eqref{BigIncr} with $\psi(h):=h^2/(2\log \log (1/h))$ since  $\phi(h):=\sqrt{2h \log \log (1/h)}$ is asymptotically inverse to $\psi$ (meaning that $\phi(\psi(h))/ h$ $ \to 1$ and $\psi(\phi(h))/ h\to 1$ as $h\downarrow 0$)
and by the the iterated log law 
$$ \limsup_{h\downarrow 0 } \frac{|B_{t+h} -B_{t}|}{\phi(h)} = 1   \text{ a.s. }. $$
Moreover also the process $X_t:=(B_{t(b-a)+a}-B_{a})/\sqrt{b-a}$ satisfies  \eqref{BigIncr} (with the same $\psi$), so  we can without loss of generality take $a=0, b=1$.
For $k,n\in \bN$, $t,\epsilon \in (0,1)$ and $h\in F\subset (0,1]$ define $ Y^h_t:=\psi(|B_{t+h} -B_t|) / h$,
\begin{align}
\label{E(F)}
E(F):=\{ (t,\omega)\in (0,1)\times \Omega : Y^h_t >1- \epsilon  \text{ for some } h\in F \} \, ,
\end{align} 
$E^n_k:=E((0,1/2^k] \cap D_n)$ and $E_k:=E((0,1/2^k])$. Since  $Y_t^h$ and $Y_t(n):=\max \{Y^h_t: h\in (0,1/2^k] \cap D_n\}$  are continuous in $t$ and measurable in $\omega$,   $Y_{\cdot}^h$ and $Y_{\cdot}(n)$   are measurable.
It follows that $E^n_k=\{ Y_{\cdot}(n)>1-\epsilon \}$ is measurable , and so also is $\cup_n E_k^n=E((0,1/2^k] \cap D)$. Since $Y^h_t$ is continuous in $h$, $E_k$ equals $E((0,1/2^k] \cap D)$ and thus it is measurable, and in particular $E:=\cap_{k} E_k$ is measurable. 
Notice that \eqref{BigIncr} shows that $\bP(E_k(t)) =1$ for each $t,k$, and so 
$\bP(E(t)) =1$ for each $t$.
 Fubini's theorem applied to the product of $\bP$ with $\cL^1$ shows that  $\cL^1(E_k(\omega)) =1=\cL^1(E(\omega)) $ for $\bP$ a.e. $\omega$, and in particular $\Pi_{\Omega}(E)=\{\omega: E(\omega)\neq \emptyset \}$ has probability $1$. 
Define for every $\omega\in \Pi_{\Omega}(E)$ and $n\in \bN$
 $$\mathcal{J}^n(\omega):=\{[t,t+h]: h\in (0,1/2^n], h+t<1, (t,\omega)\in E \text{ and } Y^h_t(\omega) > 1-\epsilon \} ; $$ 
since by definition $E$ is the set of $(t,\omega)$ for which there exist arbitrarily small $h>0$ such that $Y^h_t>1-\epsilon$, $\mathcal{J}^n(\omega) $  is a Vitali cover of $E(\omega)$ for every $\omega\in \Pi_{\Omega}(E)$. It follows from Vitali's covering theorem \cite[Theorem 1.31]{Le09} that for every $\omega\in \Pi_{\Omega}(E)$ there exist $N^n(\omega)<\infty$ and  $((\mathfrak{t}_i^n ,\mathfrak{h}_i^n)(\omega))_{i=1}^{N^n(\omega)}$ such that for all $i\neq j$ and $\omega\in \Omega$
\begin{align}
\label{thN} \textstyle
 \mathcal{J}^n(\omega) \supseteq [\mathfrak{t}_i^n ,\mathfrak{t}_i^n +\mathfrak{h}_i^n](\omega)=:I_i^n(\omega)
\, , \, 
I_i^n(\omega) \cap I_j^n(\omega) =\emptyset
\, , \, 
\sum_{i=1}^{N^n} \mathfrak{h}_i^n(\omega) > 1-\epsilon \, .
\end{align} 
Assume for the moment that $\mathfrak{t}_i^n,\mathfrak{h}_i^n,N^n$ depended measurably in $\omega$. Since on $\Pi_{\Omega}(E)$
\begin{align*}
Y^n:= \sum_{i} \psi(|B_{\mathfrak{t}_i^n+\mathfrak{h}_i^n} -B_{\mathfrak{t}_i^n}|) \geq  \sum_{i} \mathfrak{h}_i^n (1-\epsilon)  \geq   (1-\epsilon)^2 >0 \, 
\end{align*} 
and any interval in $\mathcal{J}^n(\omega)$ has length at most $1/2^n$,  $\lim_{s\downarrow 0}\psi(s)/s^2= 0 $  implies that
$$  \textstyle   \sum_{i} (B_{\mathfrak{t}_i^n+\mathfrak{h}_i^n} -B_{\mathfrak{t}_i^n})^2   \to \infty  \quad \text{ on }\Pi_{\Omega}(E) \text{ and so a.s.,}  $$
and so if $\pi^n=(s_k)_k$ is the random partition made of $0,1$ and the points $\mathfrak{t}_i^n$ and $\mathfrak{t}_i^n+\mathfrak{h}_i^n$ for $i=1,\ldots, N^n$ we get that $\langle B\rangle^{\pi^n}_{(0,1]}\to \infty$ a.s..

\textit{ Step 2.} Thus, to conclude the proof it is enough to show that one can choose $\mathfrak{t}_i^n, \mathfrak{h}_i^n,N^n$ which depend measurably in $\omega$ and satisfy \eqref{thN} \emph{for all $\omega$}. While we cannot quite do that, by revisiting the proof of Vitali's covering theorem and applying the section theorem (for an elementary proof of which we refer to \cite[Theorem 3.1]{Bass10}, \cite{Bass11}) and its immediate corollary \cite[Chapter 1, Theorem 4.14]{ReYo99} we obtain  measurable $\mathfrak{t}_i^n, \mathfrak{h}_i^n,N^n$ which  satisfy \eqref{thN} for all $\omega\in V_n$, where $V_n$ is a large set, and this allows us to conclude the proof as we explain after \eqref{thprops}. While on $V_n$ necessarily $\mathfrak{t}_i^n, \mathfrak{h}_i^n \in (0,1]$ for $i\leq N^n$, in general our $\mathfrak{t}_i^n, \mathfrak{h}_i^n $ may also take the values $0$ and $\infty$. 
Indeed, in Steps 3,4 we will construct  by induction on $i\geq 1$ random times
 $\mathfrak{t}_i^n, \mathfrak{h}_i^n$ which satisfy 
the properties stated below in \eqref{thprops} relative to the objects which we will now define. Set
$$ V_n:=  \Pi_{\Omega}(E) \cap (\cap_{i \geq 1} \{\mathfrak{t}_i^n +  \mathfrak{h}_i^n<\infty\})$$  and define the decreasing family of random intervals $(\mathcal{J}^n_i)_i$ setting $\mathcal{J}^n_1:=\mathcal{J}^n\neq  \emptyset$ on $\Pi_{\Omega}(E)$ and $\mathcal{J}^n_1:= \emptyset$ otherwise and, given 
\begin{align}
\label{setC}
 C_i^n:= \{\mathfrak{t}_i^n +  \mathfrak{h}_i^n<\infty\} \cap \{\mathcal{J}^n_i\neq \emptyset \} \, ,
\end{align} 
 we define by induction 
\begin{align*}
  \mathcal{J}^n_{i+1}:=\{[t,t+h]\in \mathcal{J}^n_i : [t,t+h] \cap 
[\mathfrak{t}_i^n, \mathfrak{t}_i^n + \mathfrak{h}_i^n]  = \emptyset \} \text{ on } C_i^n \, ,  \quad   \mathcal{J}^n_{i+1}:=\emptyset \text{ otherwise} \, ,
\end{align*}
and then we set
\begin{align*}
S^i:=\sup\{   h\in (0,1] : [t,t+h] \in \mathcal{J}^n_i, t\in (0,1) \} \vee 0 \, ,
\end{align*} 
so that $\{S^i=0\}= \{\mathcal{J}^n_i= \emptyset \}$. 
In Steps 3,4 we will construct $\mathfrak{t}_i^n, \mathfrak{h}_i^n$  such that 
\vspace{-0.1cm}  
\begin{equation}
 \begin{array}{c} \label{thprops}  
[\mathfrak{t}_i^n, \mathfrak{t}_i^n + \mathfrak{h}_i^n] \in \mathcal{J}^n_i \, \text{ and} 
\quad 2\mathfrak{h}_i^n>S^i>0  \text{ on } C_i^n  \vspace{0.3cm}     \\ 
 \mathfrak{h}_i^n=0  \text{ on }
  \{\mathcal{J}^n_i= \emptyset \} =\{ \mathfrak{t}_i^n =0\} , \quad \bP(V_n)\geq 1-1/2^n  ;
\end{array}
\end{equation}
once obtained such random times $(\mathfrak{t}_i^n ,\mathfrak{h}_i^n)_{i\geq 0}$, the proof proceeds as follows. 
From \eqref{thprops}, the proof of Vitali's covering theorem (see  \cite[Theorem 1.31]{Le09}) shows that
 $\cL^1(E(\omega)\setminus  \cup_{i\geq 1} [\mathfrak{t}_i^n, \mathfrak{t}_i^n + \mathfrak{h}_i^n](\omega)  ) = 0$ for $\omega\in V_n$, and so
 (since we proved that $\cL^1(E(\omega))=1$) $N^n:=\inf\{k: \sum_{i=1}^k \mathfrak{h}_i^n >1-\epsilon \}$ is a \emph{finite} random variable on $V_n$. Notice that $\mathfrak{h}_i^n>0$ for all $i\leq N^n$, since 
 $ \{\mathfrak{h}_i^n=0\}=\{\mathcal{J}^n_i= \emptyset \}$ is increasing in $i$.
 Thus $N^n$ and  $(\mathfrak{t}_i^n ,\mathfrak{h}_i^n)_{i=1}^{N^n}$ satisfy \eqref{thN} on $V_n$ for all  $i\neq j$, and thus if $\pi^n=(s_k)_k$ is the random partition made of $0,1$ and the points $\mathfrak{t}_i^n$ and $\mathfrak{t}_i^n+\mathfrak{h}_i^n$ for $i=1,\ldots, N^n$, reasoning as in Step 1 we have that $Y^n \geq   (1-\epsilon)^2$ on $V_n$. It follows that $   \langle B\rangle^{\pi^n}_{(0,1]}(\omega) \to \8 $  if $\omega\in V_n$ for infinitely many $n$'s, and so by Borel Cantelli's lemma  $   \langle B\rangle^{\pi^n}_{(0,1]}(\omega) \to \8 $ a.s. since $\bP(\Omega \setminus V_n)\leq 1/2^n$.

\textit{ Step 3.} 
To conclude the proof we need, for fixed $n$, to define  random times $\mathfrak{t}_i^n, \mathfrak{h}_i^n$ which satisfy \eqref{thprops}. We will so do in Step 4, using the auxiliary processes $L^i$ which we introduce in this step.  Given $\mathcal{J}^n_i$ (which so far we only defined for $i=1$) and $F\subseteq (0,1/2^n]$, define $L^i(F):(0,1)\times \Omega \to [0,1]$ as
\begin{align}
\label{L^i}
L^i_t(F)(\omega):=\sup \{ h\in F: [t,t+h] \in \mathcal{J}_i^n(\omega)\} \vee 0  \, ,
\end{align} 
and set $L^i:=L^i({(0,1/2^n]})$. 
We now need to prove that $L^1$ is measurable. Notice that, since $Y^{h}_t$ is continuous in $h$, $L^1=\sup_{k\geq n} L^1({D_k \cap (0,1/2^n]})$. As we proved $E$ and $Y^h_{\cdot}$ are measurable, and so such is $$A_h^1:=\{ (t,\omega)\in E: t<1-h \, , \, Y^h_t>1-\epsilon \} . $$
Since $L^1({D_k \cap (0,1/2^n]})=i/2^k$ on 
 $A^1_{i/2^k} \setminus (\cup_{j=i+1}^{2^{k-n}} A^1_{j/2^k} )$ for $i=1,\ldots , 2^{k-n}$ and $L^1({D_k \cap (0,1/2^n]})=0$ otherwise,  $L^1({D_k \cap (0,1/2^n]})$  and thus $L^1$ are measurable.
 
In Step 4 we will build random times $\mathfrak{t}_1^n, \mathfrak{h}_1^n$ from $L^1$. From $\mathfrak{t}_1^n, \mathfrak{h}_1^n$  one can define $\mathcal{J}_2^n$ as done after equation \eqref{setC} and thus $L^{2}$ as specified in \eqref{L^i}. One can then iterate the above procedure and define 
$\mathfrak{t}_i^n, \mathfrak{h}_i^n$ by induction on $i\geq 1$: from a measurable $L^{2}$ build random times $\mathfrak{t}_2^n, \mathfrak{h}_2^n$ as explained in Step 4, and from them build $\mathcal{J}_3^n$ and a measurable $L^{3}$ etc. For this to work we need to show that $L^{i}$ built from $ \mathcal{J}_i^n$ (and thus from $ \mathcal{J}_1^n$ and the random times $(\mathfrak{t}_j^n, \mathfrak{h}_j^n)_{j=1}^{i-1}$) is measurable for all $i\geq 2$; we now do so for $i=2$, the general case being only notationally more complicated. 
If $F\subseteq (0,1/2^n]$,  from the definition of $\mathcal{J}_2^n$ it follows that $L^2_t(F)$ equals  $$\bar{L}^2_t(F):=\sup \{ h\in F: [t,t+h] \in \mathcal{J}^n \text{ and } [t,t+h] \cap [\mathfrak{t}_1^n, \mathfrak{t}_1^n + \mathfrak{h}_1^n] = \emptyset \} \vee 0  \, $$
on $C^n_1$,  whereas on $\Omega \setminus C^n_1$ we have $L^2_t(F)=0$. 
 Since $\mathfrak{t}_1^n , \mathfrak{h}_1^n$ are random times and 
$ \{\mathcal{J}^n_1 \neq \emptyset \}$ is $\cF$-measurable (as proved\footnote{In Step 4 we use the measurability of $L^i$ to prove that $ \{\mathcal{J}^n_i \neq \emptyset \}=\{ S^i>0\}$ is $\cF$-measurable.} in Step 4), $C^n_1$ is $\cF$-measurable; thus, it is enough to prove that $\bar{L}^2_t(F)$ is measurable. The proof is basically the same as for $L^1$: since $A^1_h $ is measurable, such is $A^2_h:=A^1_h \cap B^1_h$  where
$$B^1_h:=  \{ (t,\omega)\in E: t> \mathfrak{t}_1^n + \mathfrak{h}_1^n \}
 \cup   \{ (t,\omega)\in E: t+h<\mathfrak{t}_1^n  \}   , $$
and since  $\bar{L}^2({D_k \cap (0,1/2^n]})=i/2^k$ on 
 $A^2_{i/2^k} \setminus (\cup_{j=i+1}^{2^{k-n}} A^2_{j/2^k} )$ for $i=1,\ldots , 2^{k-n}$ and $\bar{L}^2({D_k \cap (0,1/2^n]})=0$ otherwise,  $\bar{L}^2({D_k \cap (0,1/2^n]})$  is measurable and thus so is $\bar{L}^2=\sup_{k\geq n} \bar{L}^2({D_k \cap (0,1/2^n]})$.

 \textit{ Step 4.} In this step we explain how to use a measurable  $L^i$
 to build random times $\mathfrak{t}_i^n, \mathfrak{h}_i^n$ which satisfy the first 3 statements of \eqref{thprops} and 
 \begin{align}
\label{t+hSmall}
 \bP( \mathfrak{t}_i^n +  \mathfrak{h}_i^n = \infty ) \leq 1/2^{n+i} ;
\end{align} 
 since  $\bP( \Pi_{\Omega}(E) )=1$, it follows from \eqref{t+hSmall} that $\bP(V_n)\geq 1-1/2^n$, so   $(\mathfrak{t}_i^n, \mathfrak{h}_i^n)_{i\geq 1}$ satisfy \eqref{thprops}, concluding the proof. 
 Notice that, despite the fact that $(0,1)$ is uncountable,   $S^i=\sup_{t\in  (0,1)} L^i_t$ is also measurable with respect to the (complete) sigma algebra $\cF$: this  follows from\footnote{For a proof of this result one can consult \cite[Theorem A.5.10]{Bich02}.} \cite[Chapter 1, Theorem 4.14]{ReYo99} and the identity  $\{ S^i >\lambda \}=\Pi_{\Omega}(\{ L^i >\lambda \})$, which holds for all $\lambda \in \bR$.
 It follows that $\Pi_{\Omega}(\{ L^i>S^i /2\})=\{ S^i>0\}=\{\mathcal{J}^n_i \neq \emptyset \}$ is $\cF$-measurable, and since $L^i$ is measurable  we can apply  the section theorem (with the constant filtration $\cF_t:=\cF$) to $\{ L^i>S^i /2\}$  and obtain a random time $\bar{\mathfrak{t}}_i^n$ such that $\bP(\bar{\mathfrak{t}}_i^n=\infty \text{ and } \mathcal{J}^n_i \neq \emptyset ) \leq  1 /2^{n+i+1}$ and  $L^i_{\bar{\mathfrak{t}}_1^n}>S^i/2$ (and in particular $\mathcal{J}^n_i \neq \emptyset$ and $\bar{\mathfrak{t}}_i^n>0$) on $\{ \bar{\mathfrak{t}}_i^n<\infty \}$. 
We then   define $\mathfrak{t}_i^n$ to equal $\bar{\mathfrak{t}}_i^n$ on $\{ \mathcal{J}^n_i \neq  \emptyset \}$ and to equal $0$ otherwise.  In particular $\{\bar{\mathfrak{t}}_i^n=\infty \} \cap \{ \mathcal{J}^n_i \neq \emptyset \}=\{\mathfrak{t}_i^n =\infty\}$ and so 
$\bP(\mathfrak{t}_i^n=\infty ) \leq  1 /2^{n+i+1}$, 
 on $\{ \mathfrak{t}_i^n \in (0,\infty) \}$ we have $\mathfrak{t}_i^n= \bar{\mathfrak{t}}_i^n$,  $L^i_{\mathfrak{t}_1^n}>S^i/2$ and $\{ \mathcal{J}^n_i \neq  \emptyset \}$, and finally  $\{\mathfrak{t}_i^n= 0\} =\{ \mathcal{J}^n_i =  \emptyset \}$. Define
$$G_i:=\{(h, \omega)\in (0,1/2^n] \times \Omega  :  \mathfrak{t}_i^n \in (0,\infty) , \,   [\mathfrak{t}_i^n,\mathfrak{t}_i^n+h] \in \mathcal{J}^n_i , \,  h> S^i/2  \} \, , $$
which is measurable since $S^i, \mathfrak{t}_i^n$ and $(\mathfrak{t}_j^n,\mathfrak{h}_j^n )_{j=1}^{i-1}$ are $\cF$-measurable and $Y^{\cdot}_t$ is measurable.
Since
$ \{\mathfrak{t}_i^n \in (0,\infty) \}  =\Pi_{\Omega}(G_i)$,  by applying the section theorem to $G_i$ we find a random time $\bar{\mathfrak{h}}_i^n$ such that 
$\bP( \bar{\mathfrak{h}}_i^n=\infty \text{ and } \mathfrak{t}_i^n \in (0,\infty)) \leq 1/2^{n+i+1} $
 and  $(\bar{\mathfrak{h}}_i^n(\omega), \omega)\in G_i$   for $\omega \in 
\{\bar{\mathfrak{h}}^n_i<\infty\}$.
  We then   define $\mathfrak{h}_i^n$ to equal $0$ on  $\{ \mathfrak{t}_i^n =0\}$ and to equal $\bar{\mathfrak{h}}_i^n$ otherwise. 
 In particular $\{\mathfrak{h}_i^n =\infty\} \cap \{ \mathfrak{t}_i^n <\infty\}=\{\bar{\mathfrak{h}}_i^n=\infty \} \cap \{ \mathfrak{t}_i^n \in (0,\infty ) \}$ has probability at most $1 /2^{n+i+1}$, so  \eqref{t+hSmall} holds.
Notice that $\{\mathfrak{h}_i^n= 0\} = \{ \mathfrak{t}_i^n = 0 \}=\{ \mathcal{J}^n_i =  \emptyset \}$, and on $\{ \mathfrak{t}_i^n \in (0,\infty), \mathfrak{h}_i^n<\infty \}=C^n_i$ we have $ \bar{\mathfrak{h}}_i^n=\mathfrak{h}_i^n<\infty$ and so  $ \mathfrak{h}_i^n >S^i/2>0$ and $[\mathfrak{t}_i^n,\mathfrak{t}_i^n+h] \in \mathcal{J}^n_i$; thus we have defined random times  $\mathfrak{t}_i^n, \mathfrak{h}_i^n$ which satisfy \eqref{t+hSmall} and the first 3 statements of \eqref{thprops}, concluding the proof.

 \end{proof} 
We now strengthen the previous result as to make the quadratic variation to be exploding in all (non-trivial) intervals simultaneously.
\begin{lemma}
\label{InfQV}
There exist $\pi_n\in \cP[0,1]$ such that 
\[ \lim_n \langle B\rangle^{\pi_n}_{(\s,\t]}\to\8 \text{ a.s. on } \{\s<\t\}  \text{ as  } n\to \infty  \,  \text{ for all random times    }   0\leq \s\leq \t\leq 1 \,  .\]
\end{lemma}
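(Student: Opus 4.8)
The plan is to deduce Lemma \ref{InfQV} from Lemma \ref{StrongLevy} by a diagonal construction over dyadic subintervals, exploiting that every summand in \eqref{QVst} is nonnegative (so $\langle B\rangle^{\pi}_{(\cdot,\cdot]}$ is monotone in the interval) together with the additivity \eqref{QVadditive}. For $n\ge1$ and $0\le j<2^n$ set $c^n_j:=j2^{-n}$, $d^n_j:=(j+1)2^{-n}$, so that $D_n:=\bN2^{-n}\cap[0,1]$ splits $[0,1]$ into the closed intervals $[c^n_j,d^n_j]$. First I would apply Lemma \ref{StrongLevy} to each $[c^n_j,d^n_j]$ to obtain random partitions $(\rho^{n,j,m})_{m\in\bN}$ with $\langle B\rangle^{\rho^{n,j,m}}_{(c^n_j,d^n_j]}\to\infty$ a.s.\ as $m\to\infty$, and then set $M^n_j:=\inf\{m:\langle B\rangle^{\rho^{n,j,m}}_{(c^n_j,d^n_j]}\ge n\}$; this is a.s.\ finite, and it will be a random variable since $\{M^n_j\le m_0\}=\bigcup_{m\le m_0}\{\langle B\rangle^{\rho^{n,j,m}}_{(c^n_j,d^n_j]}\ge n\}$ and each $\langle B\rangle^{\rho^{n,j,m}}_{(c^n_j,d^n_j]}$ is measurable, being a path-by-path finite sum of squared increments of the jointly measurable process $B$ evaluated at measurable random times. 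I would then let $\tilde\rho^{n,j}$ be the random partition of $[c^n_j,d^n_j]$ equal to $\rho^{n,j,M^n_j}$ on $\{M^n_j<\infty\}$ and to $\{c^n_j,d^n_j\}$ on the null set $\{M^n_j=\infty\}$ — a gluing along the countable measurable partition $(\{M^n_j=m\})_{m\in\bN\cup\{\infty\}}$ of $\Omega$, hence producing measurable times by the device recalled in Appendix \ref{appendixA} — so that $\langle B\rangle^{\tilde\rho^{n,j}}_{(c^n_j,d^n_j]}\ge n$ a.s.\ by construction.

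Next I would set $\pi_n:=\bigcup_{j=0}^{2^n-1}\tilde\rho^{n,j}\in\cP[0,1]$ using the finite-union construction of Appendix \ref{appendixA} (each $\tilde\rho^{n,j}\in\cP[c^n_j,d^n_j]$ has deterministic endpoints). Then $D_n\subseteq\pi_n$, and because $c^n_j,d^n_j\in\pi_n$ and the points of $\pi_n$ inside $[c^n_j,d^n_j]$ are precisely those of $\tilde\rho^{n,j}$, additivity \eqref{QVadditive} gives $\langle B\rangle^{\pi_n}_{(c^n_j,d^n_j]}=\langle B\rangle^{\tilde\rho^{n,j}}_{(c^n_j,d^n_j]}\ge n$ a.s.\ for all $0\le j<2^n$. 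I would collect into one null set $N$ all the exceptional sets above (over all $n,j$) together with those coming from Lemma \ref{StrongLevy}; crucially $\pi_n$ and $N$ do not depend on the pair $(\sigma,\tau)$.

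To check the claim, fix random times $0\le\sigma\le\tau\le1$ and $\omega\notin N$ with $\sigma(\omega)<\tau(\omega)$, and pick $k=k(\omega)$ with $2^{-k}<\ha(\tau(\omega)-\sigma(\omega))$; then for every $n\ge k$ there is some $j_0=j_0(n,\omega)$ with $[c^n_{j_0},d^n_{j_0}]\subseteq(\sigma(\omega),\tau(\omega))$. Writing $\pi_n(\omega)=(t_i)_i$, for every $i$ with $t_i,t_{i+1}\in[c^n_{j_0},d^n_{j_0}]$ one has $(t_i\wedge\tau(\omega))\vee\sigma(\omega)=t_i$ and $(t_{i+1}\wedge\tau(\omega))\vee\sigma(\omega)=t_{i+1}$, so the summands of $\langle B\rangle^{\pi_n}_{(\sigma,\tau]}(\omega)$ indexed by such $i$ are exactly the (nonnegative) summands of $\langle B\rangle^{\pi_n}_{(c^n_{j_0},d^n_{j_0}]}(\omega)$; since the remaining summands of $\langle B\rangle^{\pi_n}_{(\sigma,\tau]}(\omega)$ are also nonnegative, $\langle B\rangle^{\pi_n}_{(\sigma,\tau]}(\omega)\ge\langle B\rangle^{\pi_n}_{(c^n_{j_0},d^n_{j_0}]}(\omega)\ge n$. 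Letting $n\to\infty$ gives $\langle B\rangle^{\pi_n}_{(\sigma,\tau]}(\omega)\to\infty$, which is the assertion.

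I expect the main obstacle to be purely the measurability/null-set bookkeeping: verifying that $M^n_j$ is a random variable, that the glued $\tilde\rho^{n,j}$ and the union $\pi_n$ are genuine random partitions (measurable times, and honest partitions at each $\omega$), and that the various exceptional sets are null. Each of these is routine within the framework set up in Appendix \ref{appendixA}; the analytic heart — the diagonal-over-dyadics idea, together with the monotonicity and additivity of $\langle B\rangle^{\pi}$ — is elementary.
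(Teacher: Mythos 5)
Your proof is correct, and its skeleton is the same as the paper's: split $[0,1]$ into the dyadic intervals of order $n$, apply Lemma \ref{StrongLevy} on each, take the union construction of Appendix \ref{appendixA} to get $\pi_n$, and finish with additivity \eqref{QVadditive} together with nonnegativity of the summands in \eqref{QVst} (the paper quotes \eqref{QVincreas} for this last step, exactly as you verify by hand). The one genuine difference is how a single partition per dyadic interval is extracted from the a.s.-divergent sequence of Lemma \ref{StrongLevy}: the paper picks a \emph{deterministic} index far enough along the sequence so that $\bP(\langle B\rangle_{\pi^{i}_n}\leq 2^n)$ is small and then concludes with Borel--Cantelli for the given pair $(\s,\t)$, whereas you select the \emph{random} index $M^n_j$ and glue along the sets $\{M^n_j=m\}$, obtaining $\langle B\rangle^{\tilde{\rho}^{n,j}}_{(c^n_j,d^n_j]}\geq n$ outright, a.s. Your variant buys two things: the exceptional set is a single null set independent of $(\s,\t)$ (indeed you prove the stronger fact that outside it $\langle B(\o)\rangle^{\pi_n(\o)}_{(s,t]}\to\8$ simultaneously for all deterministic $0\leq s<t\leq 1$), and you avoid the quantitative bookkeeping altogether --- in the Borel--Cantelli route the per-interval probability bound has to be chosen summable after multiplying by the number $2^n$ of intervals, a point one must tune correctly. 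The price is the extra measurability check for $M^n_j$ and for the glued partition $\tilde{\rho}^{n,j}$, which, as you note, is routine within the framework of the appendix (joint measurability of $B$, countable gluing over a measurable partition of $\Omega$, and the finite-union construction).
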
 
 Note that the previous equality is required to hold only on $\{\s<\t\} $ since trivially $\langle B\rangle^{\pi}_{(\s,\t]}=0$ on $\{\s=\t\} $.
\begin{proof}
Lemma \ref{StrongLevy} gives for each $n\in \bN\setminus \{0\}$ and $i=0,1,\ldots, 2^{n-1}$ some $\pi^{i}_n\in \cP[i/2^n,(i+1)/2^n]$ such that 
$ \bP(\langle B\rangle_{\pi^{i}_n} \leq 2^{n})\leq 2^{-n} $. 
Define $\pi_n:=\cup_{i=0}^{2^n-1}\pi_n^{i}\in \cP[0,1]$. Now given $\s\leq \t$ take $\o\in \{ \t -\s>2/2^n \} $, so there exists $i=i(\o)$ such that 
\[ [i/2^n,(i+1)/2^n](\o)\subseteq [\s,\t](\o)  , \]
and hence, using  \eqref{QVincreas},  we see that 
\[ \langle B\rangle^{\pi_n}_{[i/2^n,(i+1)/2^n]}(\o) \leq \langle B\rangle^{\pi_n}_{(\s,\t]}(\o) \]
and since $\pi_n \cap  [i/2^n,(i+1)/2^n]=\pi_n^{i}\cap  [i/2^n,(i+1)/2^n]$ we get that 
\[ \langle B\rangle^{\pi_n}_{[i/2^n,(i+1)/2^n]}(\o) = \langle B\rangle^{\pi_n^{i}}_{[i/2^n,(i+1)/2^n]}(\o)=\langle B\rangle_{\pi^{i}_n}(\o) , \]
Putting everything together we get that 
\[ \{ \langle B\rangle^{\pi_n}_{(\s,\t]} \leq 2^{n} \,  \text{ and   }  \,  \t -\s>2/2^n  \} \subseteq \cup_{i=0}^{2^n-1} \{ \langle B\rangle_{\pi^{i}_n}(\o) \leq 2^n \,  \text{ and   }  \,  \t -\s>2/2^n \}  , \]
and so the Borel-Cantelli lemma gives the result.
\end{proof} 
 
The following lemma states in probabilistic terms the fact that quadratic variation along the partition $\pi^{d} $ (resp. $\pi^{u}$) is only slightly bigger than  $0$ (resp. $1$).
 \begin{lemma}
\label{B/Z}
Given random times $0\leq \s\leq \t\leq 1$, $\pi\in \cP\st$ and 
a random variable $Z$ with values in $\bN\setminus \{0\}$, there exists
 $\pi'\in \cP\st$ such that $\pi\subseteq \pi'$ and 
\[ \langle B\rangle_{\pi'}= \langle B\rangle_{\pi}/Z.\]
In particular for any $\e>0$  there exist  $\pi^{d}, \pi^{u}\in \cP\st$ such that $ \pi^{d} \supseteq \pi \subseteq \pi^u$,
\[ \bP\left(\langle B\rangle_{\pi^d} >\frac{1}{Z}\right)<\e  \quad  \text{and} \quad \bP\left(\langle B\rangle_{\pi^u} \notin \left[1,1+\frac{1}{Z}\right] \text{ and } \s<\t \right) <\e .\]

\end{lemma}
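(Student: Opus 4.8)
The plan is to derive the first (main) equality by making Freedman's construction $F(\pi,k)$ from the proof of Theorem~\ref{ArbQvFn} measurable, and then to read off both probabilistic estimates from it, using Lemma~\ref{InfQV} to blow the quadratic variation up before the final tuning. For the main statement, write $\pi=(\t_j)_j\in\cP\st$ (so $K(\pi)<\8$ a.s.); for $m\in\bN\setminus\{0\}$ apply $F(\cdot,m)$ path by path on $\{Z=m\}$ to each subinterval $[\t_j,\t_{j+1}]$, i.e.\ keep $\{\t_j,\t_{j+1}\}$ if $B_{\t_j}=B_{\t_{j+1}}$ and otherwise insert the times $s^{j}_{i}:=\inf\{t\in[\t_j,\t_{j+1}]:B_t=B_{\t_j}+(B_{\t_{j+1}}-B_{\t_j})i/m\}$, $i=1,\dots,m$, together with $\t_{j+1}$ (which adds a zero increment, since $B_{s^{j}_{m}}=B_{\t_{j+1}}$). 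Continuity of $B$ makes each $s^{j}_{i}$ a random time, so gluing over $m$ along the measurable partition $(\{Z=m\})_{m}$ of $\Omega$ as earlier in this section yields $\pi'_{j}\in\cP[\t_j,\t_{j+1}]$ with $\langle B\rangle^{\pi'_{j}}_{(\t_j,\t_{j+1}]}=(B_{\t_{j+1}}-B_{\t_j})^{2}/Z$; then $\pi':=\cup_{j\in\bN}\pi'_{j}\in\cP\st$ (again by the constructions of this section) satisfies $\pi\subseteq\pi'$, $K(\pi')\leq Z\,K(\pi)<\8$, and, iterating \eqref{QVadditive},
\[\langle B\rangle_{\pi'}=\sum_{j}\langle B\rangle^{\pi'_{j}}_{(\t_j,\t_{j+1}]}=\frac1Z\sum_{j}(B_{\t_{j+1}}-B_{\t_j})^{2}=\langle B\rangle_{\pi}/Z.\]
The bound for $\pi^{d}$ is then immediate: $\langle B\rangle_{\pi}$ is a finite sum of squared increments of a continuous path, hence a.s.\ finite, so there is $M\in\bN\setminus\{0\}$ with $\bP(\langle B\rangle_{\pi}>M)<\e$; applying the main statement with the $\bN\setminus\{0\}$-valued random variable $MZ$ in place of $Z$ gives $\pi^{d}\supseteq\pi$ with $\{\langle B\rangle_{\pi^{d}}>1/Z\}=\{\langle B\rangle_{\pi}/(MZ)>1/Z\}=\{\langle B\rangle_{\pi}>M\}$.

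For $\pi^{u}$, the plan is first to build a refinement of $\pi$ whose quadratic variation dominates $Z+1$ with probability at least $1-\e$, and then to divide it down by a suitable random integer via the main statement. Choose $M\in\bN\setminus\{0\}$ with $\bP(Z>M)<\e/2$ (possible since $Z<\8$ a.s.) and let $(\rho_n)_n\in\cP[0,1]$ be as in Lemma~\ref{InfQV}, so that $\langle B\rangle^{\rho_n}_{(\t_j,\t_{j+1}]}\to\8$ a.s.\ on $\{\t_j<\t_{j+1}\}$ for every $j$; pick deterministic $n_j$ with $\bP(\langle B\rangle^{\rho_{n_j}}_{(\t_j,\t_{j+1}]}<M+1,\ \t_j<\t_{j+1})<\e/2^{j+2}$, set $\hat\pi^{j}:=\rho_{n_j}\cap[\t_j,\t_{j+1}]\in\cP[\t_j,\t_{j+1}]$ and $\pi^{u}_{0}:=\cup_{j\in\bN}\hat\pi^{j}\in\cP\st$. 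Then $\pi\subseteq\pi^{u}_{0}$, and since $\langle B\rangle_{\pi^{u}_{0}}=\sum_j\langle B\rangle^{\hat\pi^{j}}_{(\t_j,\t_{j+1}]}\geq\langle B\rangle^{\rho_{n_j}}_{(\t_j,\t_{j+1}]}$ on $\{\t_j<\t_{j+1}\}$ and $\{\s<\t\}=\cup_j\{\t_j<\t_{j+1}\}$, summing the chosen bounds gives $\bP(\langle B\rangle_{\pi^{u}_{0}}<M+1,\ \s<\t)<\e/2$. Now apply the main statement to $\pi^{u}_{0}$ with $k:=\max(1,\lceil\langle B\rangle_{\pi^{u}_{0}}/(1+1/Z)\rceil)$ in place of $Z$, obtaining $\pi^{u}\supseteq\pi^{u}_{0}\supseteq\pi$ with $\langle B\rangle_{\pi^{u}}=\langle B\rangle_{\pi^{u}_{0}}/k$; on the event $G:=\{Z\leq M\}\cap(\{\s\geq\t\}\cup\{\langle B\rangle_{\pi^{u}_{0}}\geq M+1\})$, which has probability $>1-\e$, one has $\langle B\rangle_{\pi^{u}_{0}}\geq M+1\geq Z+1$ whenever $\s<\t$, and the elementary fact that $v\geq Z+1$ forces $\lceil v/(1+1/Z)\rceil\in[v/(1+1/Z),v]$ gives $\langle B\rangle_{\pi^{u}}=\langle B\rangle_{\pi^{u}_{0}}/k\in[1,1+1/Z]$ there; hence $\bP(\langle B\rangle_{\pi^{u}}\notin[1,1+1/Z],\ \s<\t)\leq\bP(\Omega\setminus G)<\e$.

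The main obstacle is not the measurability bookkeeping of the first paragraph --- checking that the $s^{j}_{i}$ are random times and that the piecewise and countable-union operations on random partitions apply is routine given the machinery already set up in this section --- but the $\pi^{u}$ construction, where one must simultaneously force the refined quadratic variation above the \emph{random} level $Z+1$ (handled by truncating $Z$ at a deterministic $M$ and spending a geometric error budget $\e/2^{j+2}$ across the subintervals) and arrange that the final division by the random integer $k$ lands \emph{exactly} in $[1,1+1/Z]$, which is precisely what the ceiling estimate provides.
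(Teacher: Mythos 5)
Your proposal is correct and follows essentially the same route as the paper: the equal-level subdivision of each increment of $\pi$ to divide $\langle B\rangle_{\pi}$ by $Z$, and Lemma \ref{InfQV} followed by division by an integer close to the inflated quadratic variation so as to land in $[1,1+1/Z]$. The only deviations are cosmetic --- dividing by the random integer $MZ$ for $\pi^{d}$ rather than a large deterministic one, using $\lceil \langle B\rangle_{\pi^{u}_{0}}/(1+1/Z)\rceil$ in place of the paper's $\lfloor \langle B\rangle_{\pi_n}\rfloor \vee 1$ with $n\to\infty$, and your subinterval-by-subinterval refinement, which has the small merit of making the required inclusion $\pi\subseteq\pi^{u}$ explicit.
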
 
 \begin{proof}
 By working separately on each subinterval $[\s_i,\s_{i+1}]$ of $\pi=(\s_i)_{i}$, to find $\pi'$ we can assume w.l.o.g. that $\s_0=\s, \s_{i+1}=\t$ for all $i\in\bN$. Define 
\[\s'_i:=\min\{ t\geq \sigma : B_t=\frac{(B_{\t}-B_{\s})(i\wedge Z)}{Z} +B_{\s}  \} \quad \text{ on } \{ B_{\t} \neq B_{\s}\} \]
  and $\s'_0:=\s,\s'_{i+1}:=\t$ on $\{ B_{\t} = B_{\s}\} $; then $\pi\subseteq \pi':=(\s'_i)_{i\in \bN}\in \cP\st$ and 
  \[ \langle B\rangle_{\pi'}= \sum_i (B_{\s'_{i+1}} - B_{\s'_i})^2 =\left(\frac{B_{\t}-B_{\s}}{Z} \right)^2 Z = \langle B\rangle_{\pi}/Z.\]
  Now fix $\e>0$, and apply the previous result to find some $\pi'=\pi'_n$ such that \[ \langle B\rangle_{\pi'_n}= \langle B\rangle_{\pi}/n \to 0; \] taking $\pi^d:=\pi'_n$ for $n$ big enough shows that $\bP(\langle B\rangle_{\pi^d}>\frac{1}{Z})<\e$. \\  
Finally let $\pi_n$ be as in Lemma \ref{InfQV} and let $\pi'_n \in\cP\st$  be such that  $\pi'_n\supseteq \pi_n$  and  \[ \langle B\rangle_{\pi'_n}= \langle B\rangle_{\pi_n}/Y_n \quad \text{ where } \quad  Y_n:=\max\{k\in \bN: k\leq \langle B\rangle_{\pi_n}\} \vee 1 . \]
Notice that $\langle B\rangle_{\pi'_n} \leq  1+1/Y_n$ and on $\langle B\rangle_{\pi_n} \geq 1$ we have that $\langle B\rangle_{\pi'_n} \geq 1$; moreover  
 $Y_n \geq \langle B\rangle_{\pi_n} -1 \to \8$ a.s. on $\{\s<\t\}$. Taking $\pi^u:=\pi'_n$ for $n$ big enough it follows that $\bP\left(\langle B\rangle_{\pi^u} \notin \left[1,1+\frac{1}{Z}\right] \text{ and } \s<\t \right) <\e $.
\end{proof} 
  
We now essentially prove the convergence at any fixed time. 
 
\begin{lemma}
\label{ConToY}
Given random times $0\leq \s\leq \t\leq 1$ and 
a random variable $Y$ with values in $[0,\8]$, there exist
 $\pi_n\in \cP\st$ such that 
\[   \langle B\rangle^{\pi_n}_{(\s,\t]} \to Y \text{ a.s. on } \{\s<\t\}  \text{ as  } n\to \infty .\]

\end{lemma}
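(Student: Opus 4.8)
The plan is to reduce, \emph{via} the Borel--Cantelli lemma, to the construction of a single random partition matching $Y$ with high probability, and then to assemble such a partition from the two elementary blocks supplied by Lemma~\ref{B/Z}: a partition $\pi^d$ of $\st$ with quadratic variation $\approx 0$, a partition $\pi^u$ with quadratic variation $\approx 1$, and the division-by-$Z$ operation. Working with the distance $d(a,b)=|e^{-a}-e^{-b}|$ on $[0,\8]$, it suffices to produce, for each $n$, some $\pi_n\in\cP\st$ with
\[ \bP\big(\{\s<\t\}\cap\{d(\langle B\rangle^{\pi_n}_{(\s,\t]},Y)>2^{-n}\}\big)<2^{-n}, \]
since then $\langle B\rangle^{\pi_n}_{(\s,\t]}\to Y$ a.s.\ on $\{\s<\t\}$ by Borel--Cantelli. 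Because $d(a,b)\le|a-b|$ always and $d(Y,Y\wedge M)\le e^{-M}$ a.s., it is in turn enough to fix, for each $n$, a constant $M_n\uparrow\8$ with $e^{-M_n}<2^{-n-1}$ and then, for the bounded target $\tilde Y:=Y\wedge M_n\in[0,M_n]$, to find $\pi_n$ with $|\langle B\rangle^{\pi_n}_{(\s,\t]}-\tilde Y|\le 2^{-n-1}$ off an event of probability $<2^{-n}$ inside $\{\s<\t\}$. Note that on $\{Y=\8\}$ this forces $\langle B\rangle^{\pi_n}_{(\s,\t]}\approx M_n\to\8$, which is exactly what $d(\cdot,\8)\to 0$ demands, so the value $Y=\8$ needs no separate treatment.

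For a given bounded target, first subdivide $\st$ into $N=N_n$ equal pieces $\rho_k:=\s+\tfrac kN(\t-\s)$, $k=0,\dots,N$ (random times in $[0,1]$, strictly increasing on $\{\s<\t\}$), and on each $[\rho_k,\rho_{k+1}]$ invoke the $\pi^u$ half of Lemma~\ref{B/Z} with a large deterministic integer $L=L_n$ and error small enough, obtaining $\pi^{u,k}\in\cP[\rho_k,\rho_{k+1}]$ with $\langle B\rangle_{\pi^{u,k}}\in[1,1+1/L]$ except on a set of total (over $k$) probability $<2^{-n-1}$. Put $\pi^{(1)}_n:=\cup_{k=0}^{N-1}\pi^{u,k}\in\cP\st$; by the additivity \eqref{QVadditive}, on the event $W_n$ on which all the $\pi^{u,k}$ behave one has $\langle B\rangle^{\pi^{(1)}_n}_{(\s,\t]}=\sum_k\langle B\rangle_{\pi^{u,k}}\in[N,\,N(1+1/L)]$. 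Next apply the division part of Lemma~\ref{B/Z} to $\pi^{(1)}_n$ with the random integer $Z:=\lceil N/\tilde Y\rceil$ (set to, say, $N$ on $\{\tilde Y=0\}$), getting $\pi^{\mathrm{div}}_n\supseteq\pi^{(1)}_n$ in $\cP\st$ with $\langle B\rangle_{\pi^{\mathrm{div}}_n}=\langle B\rangle^{\pi^{(1)}_n}_{(\s,\t]}/Z$; finally paste, setting $\pi_n:=\pi^{\mathrm{div}}_n$ on $\{Y>0\}$ and $\pi_n:=\pi^d$ on $\{Y=0\}$, where $\pi^d\in\cP\st$ is taken from Lemma~\ref{B/Z} with $Z=L_n$ and error $<2^{-n-1}$. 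On $W_n\cap\{\tilde Y>0\}$, the elementary bounds $N/\tilde Y\le\lceil N/\tilde Y\rceil<N/\tilde Y+1$ give
\[ \tilde Y-\tfrac{M_n^{2}}{N}\ \le\ \langle B\rangle_{\pi_n}\ \le\ \Big(1+\tfrac1L\Big)\tilde Y\ \le\ \tilde Y+\tfrac{M_n}{L}, \]
whereas on $\{Y=0\}$ (off the $\pi^d$-failure set) $\langle B\rangle_{\pi_n}=\langle B\rangle_{\pi^d}\le 1/L_n$; choosing $N_n\ge M_n^{2}\,2^{\,n+1}$ and $L_n\ge M_n\,2^{\,n+1}$ secures $|\langle B\rangle^{\pi_n}_{(\s,\t]}-\tilde Y|\le 2^{-n-1}$ off an event of probability $<2^{-n}$ inside $\{\s<\t\}$, as required.

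It only remains to observe that all these objects are genuine random partitions of $\st$ depending measurably on $\o$: $\pi^{u,k}$, $\pi^{\mathrm{div}}_n$ and $\pi^d$ are random partitions by Lemma~\ref{B/Z}, the union $\cup_{k=0}^{N-1}$ of partitions of adjacent intervals and the pasting along the measurable partition $\Omega=\{Y>0\}\cup\{Y=0\}$ are among the operations introduced at the start of Appendix~\ref{appendixA}, and $\rho_k$, $Z=\lceil N/\tilde Y\rceil$ are clearly measurable with $Z\in\bN\setminus\{0\}$ since $N/\tilde Y\ge N/M_n>0$ on $\{\tilde Y>0\}$. The one genuinely delicate point is that the number of further subdivisions of $\st$ needed to reach the \emph{random} level $\tilde Y$, namely $Z=\lceil N/\tilde Y\rceil$, is itself random --- which is exactly why the division-by-an-arbitrary-random-$Z$ statement of Lemma~\ref{B/Z} is the crucial ingredient --- together with the boundary cases $Y=0$ (absorbed by $\pi^d$) and $Y=\8$ (absorbed by letting $M_n\uparrow\8$). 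Everything else is routine bookkeeping with the distance $d$ and Borel--Cantelli.
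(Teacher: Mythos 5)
Your proposal is correct: the per-$n$ construction is sound (the $\pi^u$ blocks on the $N_n$ equal time-pieces, the exact division by the random integer $Z=\lceil N_n/(Y\wedge M_n)\rceil$ permitted by Lemma~\ref{B/Z}, the additivity \eqref{QVadditive}, and the two-piece pasting on $\{Y=0\}$ are all legitimate operations in the sense of Appendix~\ref{appendixA}), the error estimates in the metric $d$ do give $d(\langle B\rangle^{\pi_n}_{(\s,\t]},Y)\leq 2^{-n}$ off an event of probability $<2^{-n}$ inside $\{\s<\t\}$, and Borel--Cantelli finishes the argument. The route differs from the paper's in how the target $Y$ is matched: the paper discretises $Y$ dyadically, works separately on each level set $\{Y\in[i2^{-n},(i+1)2^{-n})\}$ (a countable pasting), uses a \emph{random} number $i$ of unit-QV blocks together with the \emph{deterministic} divisor $2^n$, and disposes of $\{Y=\8\}$ by invoking Lemma~\ref{InfQV} directly; you instead fix a \emph{deterministic} number $N_n$ of unit-QV blocks and push all the randomness into the divisor $Z=\lceil N_n/(Y\wedge M_n)\rceil$, letting the truncation level $M_n\uparrow\8$ absorb the value $Y=\8$ via $d(Y,Y\wedge M_n)\leq e^{-M_n}$. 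What your variant buys is lighter bookkeeping: only a two-piece pasting of partitions over $\{Y=0\}$ versus $\{Y>0\}$, no direct appeal to Lemma~\ref{InfQV} in this step (it still enters implicitly through the proof of Lemma~\ref{B/Z}), and a single uniform error analysis in $d$ covering $Y$ small, large and infinite; what the paper's version buys is that the divisor stays the deterministic constant $2^n$, with the randomness confined to which level set of $Y$ one is on, at the cost of the countable case-by-case construction. Both hinge on the same two ingredients, Lemma~\ref{B/Z} and Borel--Cantelli, and yield the same per-$n$ probability bound.
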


\begin{proof}
 On $ \{ \s=\t\}$ we define $\pi_n=\{\s\}$ for all $n$, and on $ \{Y=\8 ,\, \s<\t\}$ we set $\pi_n$ equal to the random partition $\pi_n$ given by lemma \ref{InfQV}; this clearly gives the thesis on 
$ \{Y=\8\} \cup\{ \s=\t\}$. To conclude we will define $\pi_n$ separately on each $\{ Y\in [i/2^n,(i+1)/2^n), \, \s <\t \}, i\in \bN$. We want to   build  $\tilde{\pi}_n \in  \cP\st $ such that for all $i\in \bN\setminus \{ 0\}$,
\begin{align}
\label{pitildeEQ}
\bP\left(\langle B\rangle_{\tilde{\pi}_n} \notin [i, i+1 ) , Y\in [i/2^n, (i+1)/2^n) \text{ and } \s<\t \right) < 1/2^{n+i} \, ;
\end{align} 
then if we define $\pi_n$ to be for $i\in \bN\setminus \{0\}$ the random partition $\pi'$ given by Lemma \ref{B/Z} with $Z=2^n$, and for $i=0$  the random partition $\pi^d$ given by Lemma \ref{B/Z} with $\e=1/2^{n}$ and $Z=2^n$, since trivially
\begin{align*}
\label{}
\{  | \langle B\rangle_{\pi_n}-Y| >1/2^n , Y\in [i/2^n, (i+1)/2^n) \} \subseteq 
\{   \langle B\rangle_{\pi_n} \notin [i/2^n, (i+1)/2^n) \ni Y \} \, ,
\end{align*} 
it follows that 
\[ \bP( | \langle B\rangle_{\pi_n}-Y| >1/2^n , Y<\8 \text{ and } \s<\t) < 1/2^n +\sum_{i=1}^{\8} 1/2^{n+i}=2/2^n , \]
and so Borel-Cantelli's lemma  yields the thesis.

We will construct such $\tilde{\pi}_n  $ separately on each $\{ Y\in [i/2^n,(i+1)/2^n), \, \s <\t \}, i\in \bN$ as the union over $k=0,\ldots ,i-1$ of some partitions $ \pi^{i,k}_n$
of  some subintervals $[\s^{i,k}, \s^{i,k+1}]$. First,  we  define the random times 
\[ \s^{i,k}:= \frac{\t-\s}{i} k+ \s ,  \quad i\in \bN\setminus \{0\}, k=0,\ldots , i\] 
and notice that $ \s^{i,k} < \s^{i,k+1}$ on $\{\s<\t\}$, so we can use Lemma \ref{B/Z} to find $\pi^{i,k}_n\in \cP[\s^{i,k}, \s^{i,k+1}]$ such that 
\begin{align*}
\label{}
 \bP\left(  \langle B\rangle_{\pi^{i,k}_n}  \notin \left[1,1+\frac{1}{Z_n}\right) \text{ and } \s<\t \right) < \frac{i}{2^{n+i}} \, , 
\end{align*} 
where we take $Z_n:=j$ on $\{ Y\in [j/2^n, (j+1)/2^n) \}$ for $j\in \bN\setminus\{0\}$ and\footnote{On $\{ Y\in [0, 1/2^n) \}$ we can define $Z_n$ arbitrarily, as long as it takes values in $\bN\setminus \{0\}$.} $Z_n:=1$ on $\{ Y\in [0, 1/2^n) \}$.
Intersecting with $\{ Y\in [i/2^n, (i+1)/2^n)\}$ shows in particular that
\begin{align}
\label{piikn}
\bP\left(\langle B\rangle_{\pi^{i,k}_n} \notin [1, 1+1/i ) , Y\in [i/2^n, (i+1)/2^n) \text{ and } \s<\t \right) < \frac{i}{2^{n+i}} \, .
\end{align} 
  Then we define $\tilde{\pi}_n^i:=\cup_{k=0}^{i-1} \pi^{i,k}_n$, which belongs to $ \cP[\min_k \s^{i,k},\max_k \s^{i,k+1}] =\cP[\s,\t]$, and  we 
set
\[ \tilde{\pi}_n: = \tilde{\pi}_n^i \quad \text{ on } \{ Y\in [i/2^n, (i+1)/2^n)  \text{ and }  \s<\t \} , \quad i\in \bN\setminus \{0\}\]
 and  $\tilde{\pi}_n:=\{\s\} \cup \{\t\}$ on $ \{Y\in [0, 1/2^n)\} \cup \{Y=\8 \} \cup \{\s=\t\}$, so trivially $ \tilde{\pi}_n \in  \cP\st $.
 Since  \eqref{QVadditive} gives that for $i\in \bN\setminus \{0\}$
 \[ \langle B\rangle_{\tilde{\pi}_n}  = \sum_{k=0}^{i-1}\langle B\rangle_{\pi^{i,k}_n}  \quad \text{ on } \{ Y\in [i/2^n, (i+1)/2^n) \text{ and }   \s<\t \}  \, \]
 and since $\sum_{k=0}^{i-1} a_k \notin [i,i+1)$  implies that $ a_k \notin [1,1+1/i)$ for some $k$, from \eqref{piikn} summing over $k$ and majorizing  we obtain that
 $\tilde{\pi}_n $  satisfies \eqref{pitildeEQ} for all $i\in \bN\setminus \{ 0\}$, concluding the proof.
\end{proof}

To deal with the fact that $A$ may take the value $\infty$, we have decided to work with the distance $d(a,b)=|\exp(-a)-\exp(-b)|$ on $[0,\infty]$, which is not invariant by translations yet satisfies the following property.
\begin{lemma}
\label{Triang}
Given $a_i,b_i\in [0,\infty], i=1,\ldots, n$, we have
 \begin{align}
\label{ }
d\left(\sum_{i=0}^n a_i, \sum_{i=0}^n b_i \right) \leq \sum_{i=0}^n d(a_i,b_i)  \, .
\end{align} 
\end{lemma}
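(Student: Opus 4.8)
The plan is to reduce the statement to the case of two summands and then induct on $n$. The first step is the substitution $x_i := e^{-a_i}$ and $y_i := e^{-b_i}$, which maps $[0,\infty]$ into $[0,1]$ under the convention $e^{-\infty} = 0$; the only point worth checking is that the identity $e^{-(u+v)} = e^{-u}e^{-v}$ continues to hold for all $u,v \in [0,\infty]$, which is immediate (if either argument equals $\infty$ both sides vanish). Under this substitution $d(a,b) = |x-y|$ and $d\big(\sum_i a_i, \sum_i b_i\big) = \big|\prod_i x_i - \prod_i y_i\big|$, so the whole claim becomes a statement about finite products of numbers in $[0,1]$.

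The heart of the matter is then the two-factor inequality $|x_1x_2 - y_1y_2| \le |x_1 - y_1| + |x_2 - y_2|$ for $x_1,x_2,y_1,y_2 \in [0,1]$. I would prove it via the telescoping identity $x_1x_2 - y_1y_2 = x_1(x_2 - y_2) + y_2(x_1 - y_1)$, taking absolute values and using $0 \le x_1 \le 1$ and $0 \le y_2 \le 1$. This is the only computation involved and it is entirely routine; I do not expect any real obstacle here.

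Finally I would conclude by induction on $n$. Writing $\sum_{i=0}^{n} a_i = \big(\sum_{i=0}^{n-1} a_i\big) + a_n$ and likewise for $b$, the two-factor inequality applied with first arguments the partial sums $\sum_{i=0}^{n-1} a_i$ and $\sum_{i=0}^{n-1} b_i$ (which may themselves equal $\infty$, but this is harmless since $e^{-\cdot}$ still lands in $[0,1]$ and the product formula remains valid) gives $d\big(\sum_{i=0}^{n} a_i, \sum_{i=0}^{n} b_i\big) \le d\big(\sum_{i=0}^{n-1} a_i, \sum_{i=0}^{n-1} b_i\big) + d(a_n, b_n)$, and the inductive hypothesis bounds the first term by $\sum_{i=0}^{n-1} d(a_i, b_i)$. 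The base case ($n=0$, a single summand) is trivial, being an equality. The only subtlety, as flagged, is keeping track of the value $\infty$, and this is dealt with uniformly by the remark that $e^{-(u+v)} = e^{-u}e^{-v}$ on $[0,\infty]$.
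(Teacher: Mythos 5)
Your proof is correct, and it takes a genuinely different route to the key two-term estimate than the paper does. The paper also reduces to the case of two summands and iterates, but it works directly on $[0,\infty]$ with the integral representation $d(a,b)=\int_b^a e^{-t}\,dt$ (for $b\leq a$), from which it extracts the translation and monotonicity properties $d(a+u,b+u)\leq d(a,b)$ and $d(a,b)\leq d(v,b)$ for $a\leq v$, and then argues by cases according to the signs of $a_1-b_1$ and $a_2-b_2$ (in the mixed-sign case it even gets the stronger bound by $\max(d(a_1,b_1),d(a_2,b_2))$, though this extra strength is not used). You instead conjugate by $u\mapsto e^{-u}$, which turns sums in $[0,\infty]$ into products in $[0,1]$ and the claim into $\lvert\prod_i x_i-\prod_i y_i\rvert\leq\sum_i\lvert x_i-y_i\rvert$, settled by the telescoping identity $x_1x_2-y_1y_2=x_1(x_2-y_2)+y_2(x_1-y_1)$ and induction. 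Your approach buys a case-free argument in which the value $\infty$ is handled uniformly through $e^{-(u+v)}=e^{-u}e^{-v}$; the paper's approach stays in the additive picture (which matches how the lemma is applied to increments of $A$ and of the quadratic variation) and yields the slightly finer mixed-sign observation as a by-product. Either proof is complete and correct.
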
 
\begin{proof}
It is enough to prove this for $n=2$ and then iterate. Since $\exp(-t)$ is positive decreasing  if $0\leq b\leq a\leq v\leq \infty$ and $u\geq 0$ 
from $d(a,b)= \int_b^a \exp(-t) dt$ we obtain 
\begin{align}
\label{ineqd}
d(a+u,b+u)\leq d(a,b) \text{ and } d(a,b)\leq d(v,b) \, .
\end{align} 
Assume w.l.o.g. that $a_1\geq b_1$; then  we claim that
\[ \text{ if }  a_2<b_2  \text{ then } \quad d(a_1+a_2,b_1+b_2)\leq \max(d(a_1,b_1), d(a_2,b_2)) \, : \]
 indeed if $a_1+a_2\geq b_1+b_2$ using \eqref{ineqd} one has
  $$d(a_1+a_2,b_1+b_2)\leq d(a_1+b_2,b_1+b_2) \leq d(a_1,b_1) \, ,$$ and the other case is analogous. If  $a_2 \geq b_2$ by the linearity of the integral and  \eqref{ineqd} 
  $$ d(a_1+a_2,b_1+b_2)=
   d(a_1+a_2,a_1+b_2)+ d(a_1+b_2,b_1+b_2)\leq  d(a_2,b_2) + d(a_1,b_1) .$$
\end{proof}

We can finally stitch all the pieces together. We define $\infty -\infty:=0$, so in the following proof the quantities $ A_{t_{i+1}} -A_{t_i}$ (with $t_i \leq t_{i+1}$)  are always well defined and satisfy $\sum_{i=0}^{n-1} A_{t_{i+1}} -A_{t_i}= A_{t_{n}} -A_{t_0}$. 

\begin{proof}[Proof of Theorem \ref{ArbQV}] 
Thanks to Scholium \ref{Time01} it is enough to work on the time interval $[0,1]$. For  simplicity,  we will first build (in step 1 and 2) $\pi_n$ which may fail to be refining and to include $\bar{\pi}_n$  but does satisfy the other assertions of the theorem. 

\textit{Step 1.}
To isolate the main idea from the technicalities we first deal with the case of  continuous $A$, using the same notation as for the general case so as to be able to refer back to this case  later;  denote by $\tilde{\pi}_n=(\s_n^i)_{i\in\bN}$ the `random' partition 
$\cup_{i=0}^{2^n} \{ i/2^n\}$ and notice that  $\s_n^i=1$ if $i\geq \tilde{i}:=2^n$, so it is enough to consider from now on $i\leq \tilde{i}-1$.
 Given  $\pi \in \cP[\s_n^i, \s_n^{i+1}]$
 define \begin{align}
\label{alongD_n}
\Delta_n^i(\pi)=\Delta_n^i(\pi,A):=d( \langle B\rangle^{\pi}_{[\s_n^i,\s_n^{i+1}]} \,  , A_{\s_n^{i+1}} -A_{\s_n^i} )
\end{align} 
Notice that by definition $\Delta_n^i(\pi,A)=0$ on $\{\s_n^i=\s_n^{i+1}\}$; thus, thanks to Lemma \ref{ConToY},  we can find $\pi_n^i \in \cP[\s_n^i, \s_n^{i+1}]$ such that 
 \[ \bP(\Delta_n^i(\pi_n^i)> 2^{-n}/ \tilde{i})< 2^{-n}/ \tilde{i}. \]
 Setting $\pi_n:=\cup_{i=0}^{\tilde{i}-1} \pi_n^i$ and using \eqref{TriangleIneqfor<B>} and 
 Lemma \ref{Triang} we get, writing $\langle B\rangle^{\pi_n}_t$ and $A_t$ as the sums of their increments over the subintervals of $\tilde{\pi}_n$
 $$ \textstyle  d( \langle B\rangle^{\pi_n}_{\s_n^k} , A_{\s_n^k} )  \leq \sum_{i=0}^{k-1} \Delta_n^i(\pi_n^i)  \leq \sum_{i=0}^{\tilde{i}-1} \Delta_n^i(\pi_n^i) \, ;$$ 
if the sum over $\tilde{i}$ positive terms is greater than $2^{-n}$ then at least one summand is greater than  $2^{-n}/ \tilde{i}$ and so 
\begin{align}
\label{MaxTriangle}
\{ \max_{t\in \tilde{\pi}_n} d( \langle B\rangle^{\pi_n}_t , A_t ) >1/2^n  \} \subseteq 
\cup_{i=0}^{\tilde{i}-1}  \{  \Delta_n^i(\pi_n^i) > 2^{-n}/ \tilde{i} \, \} \, ,
\end{align} 
and so  we  obtain that for $k=n$
\begin{align}
\label{D_kNotYetThere}
\textstyle
\bP( \max_{t\in \tilde{\pi}_k} d( \langle B\rangle^{\pi_n}_t , A_t ) >1/2^n ) \leq 1/2^n  .
\end{align}  
Since $(\tilde{\pi}_k)_k$ is refining 
\[ \max_{t\in \tilde{\pi}_k} d( \langle B\rangle^{\pi_n}_t , A_t )
\leq 
\max_{t\in \tilde{\pi}_n} d( \langle B\rangle^{\pi_n}_t , A_t )   \, , \]
and so \eqref{D_kNotYetThere} holds for all $k \leq n$; 
thus for each fixed $k$ we can  apply Borel-Cantelli's lemma to get that $  \max_{t\in \tilde{\pi}_k} d( \langle B\rangle^{\pi_n}_t , A_t )\to 0$ a.s. as $n\to\8$, and  since $F:=\cup_{k\in \bN} \tilde{\pi}_k$
 is dense in $[0,1]$ and contains the  set of jumps of $A$ (which in the case of step 1 is empty), Lemma \ref{ConvAtJumps} gives\footnote{As $[0,\8]$  is homeomorphic to $[0,1]$,  Lemma \ref{ConvAtJumps} holds if $a$ has values in $[0,\infty]$ instead of $[0,1]$.}
  the convergence for all $t\in [0,1]$, uniformly over every interval where $A$ is continuous (which in the case of step 1 is everywhere). Notice that since $\tilde{\pi}_n\supseteq \bN / 2^n$ by construction $\tau_n^i +  2^{-n}$ is a stopping time, where $\pi_n=(\tau_n^i)_i$.
 
\textit{Step 2.}
We deal now with a general increasing process $A$. Consider the positive increasing   process $D_t:=\lim_{s\in \bQ, s\downarrow t} 1-\exp(-A_s)$, which is \emph{c{\`a}dl{\`a}g, bounded by} $1$, and has the same times of jump as $A$.
Construct $\pi'_n$ by setting $\t^0_n:=0$,
\begin{align}
\label{PiJumps}
\t^{i+1}_{n}:=\inf\{t>\t^i_n : D_t - \lim_{s\uparrow t} D_s > 1/n \}\wedge 1 \, , \quad \text{ and   }  \, \pi'_n:=(\t_n^i)_i .
\end{align} 
Notice that the $\t_n^i$ are random times 
and $\pi'_n$ are random partitions of $[0,1]$ (for an elementary proof see\footnote{The  cited lemma deals with stopping times and c{\`a}dl{\`a}g adapted processes; these reduce to random times  and c{\`a}dl{\`a}g processes when considering a constant filtration.}  \cite[Lemma 3.3]{Sio13DM}) and $\cup_n \pi_n'$ contains all the times of jumps of $D$, i.e. of $A$.
 Since $D$ is increasing and $D_1\leq 1$ we have that $\t^i_n=1$ for any $i\geq  n$. Now define $\tilde{\pi}_n=(\s_n^i)_{i\in\bN}\in \cP[0,1]$ as the random partition $ \pi'_n \cup (\cup_{i=0}^{2^n} \{ i/2^n\})$, notice that   $\s_n^i=1$ for any $i\geq \tilde{i}:=n+2^n$  and that  $(\tilde{\pi}_n)_n$ is refining. 
The proof given for continuous $A$ then applies word by word, giving \eqref{D_kNotYetThere} and the thesis.

\textit{Step 3.}
Finally, we will now improve on the above proof and show that $\pi_n$ can be chosen to be refining and to include $\bar{\pi}_n$.
We will define $(\pi_n, \tilde{\pi}_n)_n$ by induction; more precisely we set $\pi_0:=\tilde{\pi}_0:=\bar{\pi}_0$,   and for $n\geq 1$ we will define  $ \tilde{\pi}_n$ given $(\pi_k)_{k< n}$ and then define  $\pi_n$  given  $ \tilde{\pi}_n$.
Let $ \pi'_n$ be as in \eqref{PiJumps}, set
\[ \tilde{\pi}_n:= \pi'_n \cup (\cup_{i=0}^{2^n} \{ i/2^n\})\cup \pi_{n-1} \cup (\cup_{k=0}^{n} \bar{\pi}_{k}) ,
\] 
and notice that    $(\tilde{\pi}_k)_{k\leq n}$ is refining (since $(\pi_k)_{k\leq n-1}$ is refining, by inductive hypothesis).
We now endeavor to construct some $\pi_n \supseteq \tilde{\pi}_n=(\s_n^i)_{i\in\bN}$ such that \eqref{D_kNotYetThere} holds for $k=n$, which 
 would imply \eqref{<B>ConvA} (as in step 1), and since $\pi_{n-1}\cup \bar{\pi}_n \subseteq \tilde{\pi  }_n  \subseteq \pi_n$ the proof would be over.
We will now make use of the random variables $K(\tilde{\pi}_n)$ and $\Delta_n^i(\pi,A)$ defined in \eqref{Kpi} and \eqref{alongD_n}.  Thanks to Lemma \ref{ConToY} for each $\tilde{i }\in \bN\setminus \{0\}$ such that $ \bP(K(\tilde{\pi}_n)=\tilde{i })>0$ 
there exists $\pi^{i,\tilde{i }}_n\in  \cP[\s_n^i, \s_n^{i+1}]$ such that 
\[\bP(\Delta_n^i(\pi^{i,\tilde{i }}_n, A)> 1/2^n \tilde{i})<  \bP(K(\tilde{\pi}_n)=\tilde{i })/2^n \tilde{i} . \]
 Then we set  $\pi_{n}^i:=\pi^{i,\tilde{i }}_n$ on $\{ K(\tilde{\pi}_n)=\tilde{i } \}$ for each $\tilde{i }$ such that $ \bP(K(\tilde{\pi}_n)=\tilde{i })>0$; this defines $\pi_{n}^i$ on a set of full measure, and on its complement we can define $\pi_{n}^i:=\{\s_n^i \}\cup \{ \s_n^{i+1} \} $. Then $\pi_{n}^i$ belongs to $ \cP[\s_n^i, \s_n^{i+1}]$ and for every $\tilde{i }\in \bN \setminus \{0\}$  
 \begin{align}
\label{DeltaKi}
\bP(\Delta_n^i(\pi^{i}_n, A\wedge n)> 1/2^n \tilde{i}  \,  \text{  and   }  K(\tilde{\pi}_n)=\tilde{i }   \, ) \leq  \bP(K(\tilde{\pi}_n)=\tilde{i })/2^n \tilde{i} \, . 
\end{align} 
Now we set $\pi_n:=\cup_{i\in \bN} \pi_n^i$ and notice that $\pi_n$  on $\{ K(\tilde{\pi}_n)=\tilde{i } \}$ equals\footnote{Indeed for $i\geq \tilde{i }$ on $\{ K(\tilde{\pi}_n)=\tilde{i } \}$ we have $\s_n^i=1=\s_n^{i+1}$ and so  $\pi_{n}^i=\{1\}\subseteq \pi_{n}^{\tilde{i } -1}$.} the \emph{finite }  union $ \cup_{i=0}^{\tilde{i}-1} \pi^{i}_n$, and so by the same argument as for \eqref{MaxTriangle} we get that
\[ M_n^{\tilde{i }}:= \{ \max_{t\in \tilde{\pi}_n} d( \langle B\rangle^{\pi_n}_t , A_t) >1/2^n   \text{  and   }  K(\tilde{\pi}_n)=\tilde{i }    \}  \]
is a subset of 
\[ \cup_{i=0}^{\tilde{i}-1} \,\,  \{  \Delta_n^i(\pi_n^i, A) > 1/2^n \tilde{i}    \text{  and   }  K(\tilde{\pi}_n)=\tilde{i }   \}  \, , \]
and thus \eqref{DeltaKi} shows that $M_n^{\tilde{i }}$ has probability smaller than $\bP(K(\tilde{\pi}_n)=\tilde{i })/2^n  $. The proof is concluded since
\[
 \bP( \max_{t\in \tilde{\pi}_n} d( \langle B\rangle^{\pi_n}_t , A_t ) >1/2^n ) =  \sum_{\tilde{i }\in \bN}\bP(M_n^{\tilde{i }}) \leq \sum_{\tilde{i }\in \bN} \bP(K(\tilde{\pi}_n)=\tilde{i })/2^n  = 1/2^n \, . \]
As before  our construction gives that $\tau_n^i+  2^{-n}$ is a stopping time as $\tilde{\pi}_n\supseteq \bN / 2^n$.
\end{proof}


\end{document}